\documentclass[12pt]{article}
\usepackage{graphicx}
\usepackage{latexsym,amssymb}
\usepackage{amsthm}
\usepackage{indentfirst}
\usepackage{amsmath,mathtools}
\usepackage{mathrsfs}
\usepackage{enumerate}
\usepackage{color,xcolor}
\usepackage{cite}
\usepackage{xurl}
\usepackage{hyperref}
\usepackage{microtype}
\usepackage[nameinlink,capitalise,noabbrev]{cleveref}
\usepackage[a4paper,text={160true mm,230true mm},centering,top=35true mm,head=5true mm,headsep=2.5true mm,foot=8.5true mm]{geometry}
\allowdisplaybreaks
\definecolor{linkblue}{RGB}{0,70,180}
\hypersetup{colorlinks=true,
  linkcolor=linkblue,
    citecolor=linkblue,
    urlcolor=linkblue}
\numberwithin{equation}{section}

\newtheorem{theorem}{Theorem}[section]
\newtheorem{definition}{Definition}[section]

\newtheorem{proposition}[theorem]{Proposition}
\newtheorem{lemma}[theorem]{Lemma}
\newtheorem{corollary}[theorem]{Corollary}

\newtheorem*{claim*}{Claim}

\newtheorem{remark}[theorem]{Remark}
\newtheorem{theoremalph}{Theorem}

\crefname{assumption}{Assumption}{Assumptions}
\Crefname{assumption}{Assumption}{Assumptions}
\crefname{construction}{Construction}{Constructions}
\Crefname{construction}{Construction}{Constructions}

\newcommand{\Leb}{\operatorname{Leb}}
\newcommand{\Jac}{\operatorname{Jac}}
\newcommand{\pw}{\operatorname{pw}}
\newcommand{\Basin}{\operatorname{Basin}}
\newcommand{\supp}{\operatorname{supp}}

\newcommand{\diam}{\operatorname{diam}}

\newcommand{\cP}{\mathcal P}

\newcommand{\cC}{\mathcal C}

\newcommand{\cO}{\mathcal O}

\newcommand{\N}{\mathbb N}

\newcommand{\weakstar}{\mathop{\longrightarrow}\limits^{w^*}}
\newcommand{\Diff}{\operatorname{Diff}}

\newcommand{\Gr}{\operatorname{Gr}}

\begin{document}
\begin{sloppypar}
\title{Physical measures for partially hyperbolic systems with $u$-definite subbundles}
\author{Yongluo Cao and Zeya Mi\footnote{Y. Cao is partially supported by National Key R\&D Program of China (2022YFA1005802) and NSFC 12371194. Z. Mi is partially supported by the National Key R\&D Program of China (2022YFA1007800) and NSFC 12671223.}}
\date{\today}
\maketitle

\begin{abstract}
In this paper, we study \(C^{1+\alpha}\) diffeomorphisms of a compact Riemannian
manifold \(\mathbf{M}\) with an attracting set \(\Lambda\) admitting
a partially hyperbolic splitting
\[
T_\Lambda \mathbf{M}
=
E^u\oplus C_1\oplus\cdots\oplus C_k\oplus E^s.
\]
We assume that, for every ergodic Gibbs \(u\)-state supported on
\(\Lambda\), the Lyapunov exponents along each \(C_i\) are either
all positive or all negative. We prove that \(\Lambda\) supports
finitely many ergodic hyperbolic SRB measures whose basins cover
Lebesgue almost every point of its topological basin.

Our approach relies on an empirical basin-covering theorem for
attracting sets with dominated splittings. Its proof hinges on a new
notion of sliding suffix times, which allows us to synchronize
expansion and contraction along a common empirical subsequence.
\end{abstract}

\section{Introduction}

A central problem in smooth dynamics is to characterize the asymptotic
statistics of Lebesgue-typical orbits. This motivates the study of invariant probability measures governing time averages on sets of positive volume.  Let \(f\colon \mathbf{M}\to \mathbf{M}\) be a diffeomorphism of a compact
Riemannian manifold.  For an invariant probability measure \(\mu\),
its basin is
\[
\Basin(\mu)
=
\left\{
x\in \mathbf{M}:
\frac1n\sum_{j=0}^{n-1}\delta_{f^jx}
\weakstar\mu
\right\}.
\]
The measure \(\mu\) is called \emph{physical} if its basin has positive
Lebesgue measure.

A closely related geometric notion is that of an SRB measure.  An
invariant probability measure is an \emph{SRB measure} if it has a positive Lyapunov exponent almost
everywhere and its conditional measures along local Pesin unstable
manifolds are absolutely continuous with respect to leaf volume.  For
\(C^{1+\alpha}\) diffeomorphisms, every ergodic hyperbolic SRB measure is
physical; see \cite{Pesin,LedrappierYoung}.  For uniformly hyperbolic
attractors, the classical works of Sinai, Ruelle, and Bowen established
finiteness of ergodic SRB measures and basin covering; see
\cite{Sinai72,Ruelle76,Bowen75,BowenRuelle75}.

Beyond uniform hyperbolicity, one would still like to know whether typical
orbits are described by finitely many physical measures.  This belongs to
the broader program proposed by Palis on the finitude of attractors and
the prevalence of their basins \cite{Palis00}.  Partially hyperbolic
systems, and more generally systems with dominated splittings, form a
natural setting for this problem.

For partially hyperbolic diffeomorphisms, Gibbs \(u\)-states are the
natural candidates for physical measures.  Their conditional measures
along strong unstable leaves are absolutely continuous with respect to
leaf volume \cite{PesinSinai}.  Much of the theory is governed by the
signs of their center Lyapunov exponents.  When all Gibbs \(u\)-states
have negative center Lyapunov exponents, finiteness and basin covering were proved
by Bonatti--Viana \cite{BonattiViana}.  The mostly expanding theory was
developed by Alves--Bonatti--Viana and later refined in several directions
\cite{ABV,AnderssonVasquez,MCY2017,BurguetYang}.

The problem is subtler when the sign depends on the Gibbs \(u\)-state.
For a one-dimensional center, hyperbolicity already determines a sign for
each ergodic Gibbs \(u\)-state; see \cite{HuaYangYang,CYZ}.  Several
dominated one-dimensional center subbundles were considered in
\cite{MiCao2026,BurguetCenter}.  For one center subbundle of arbitrary
dimension, Cao--Mi--Yang \cite{CaoMiYang} introduced the
\(u\)-definite condition: for every ergodic Gibbs \(u\)-state, all
exponents along the center have one strict sign, although the sign may
vary with the measure.  They proved finiteness and basin covering under
this assumption.

The sign condition rules out two different obstructions.  Zero center Lyapunov
exponents cannot in general be allowed: the product of an Anosov
diffeomorphism with the identity on the circle has ergodic Gibbs
\(u\)-states with vanishing center Lyapunov exponent but no physical measure.  In higher
dimension, hyperbolicity alone is also insufficient.  The Bowen-eye
products in \cite{CaoMiYang} have hyperbolic ergodic Gibbs \(u\)-states,
with exponents of both signs along the center, but no physical measure.

We study physical and SRB measures for partially hyperbolic diffeomorphisms with several \(u\)-definite center subbundles.

\medskip
Let $f$ be a
\(C^{1+\alpha}\) diffeomorphism on \(\mathbf{M}\).  A compact set \(\Lambda\subset \mathbf{M}\) is said to be an attracting set of $f$ if there exists a trapping
region \(U\) such that
\(
f(\overline U)\subset U,
\Lambda=\bigcap_{n\ge0}f^n(U).
\)
Its topological basin is defined as $B(\Lambda):=\{x\in \mathbf{M}:\omega(x)\subset\Lambda\}$.
The set \(\Lambda\) is \emph{partially hyperbolic} if there exists an invariant dominated splitting
$$
T_\Lambda \mathbf{M}
=
E^u\oplus C_1\oplus\cdots\oplus C_k\oplus E^s
$$
of the tangent space over $\Lambda$, such that $E^u$ is uniformly expanding and $E^s$ is uniformly contracting, see \S \ref{subsec:dominated-splittings} for more details.
We write \(E^c=C_1\oplus\cdots\oplus C_k\).

\begin{definition}

An invariant subbundle \(C\subset E^c\) is \emph{\(u\)-definite} if,
for every ergodic Gibbs \(u\)-state \(\mu\) supported on \(\Lambda\),
the Lyapunov exponents along \(C\) are either all positive or all
negative.
\end{definition}

The sign may depend on both \(i\) and the ergodic Gibbs \(u\)-state, and no
dominated refinement inside an individual \(C_i\) is assumed.  Our main
result is the following.

\begin{theoremalph}\label{thm:A}
Let $f$ be a $C^{1+\alpha}$ diffeomorphism on a compact Riemannian manifold $\mathbf{M}$ and $\Lambda$ be an attracting set with partially hyperbolic splitting
$$
T_\Lambda \mathbf{M}
=
E^u\oplus C_1\oplus\cdots\oplus C_k\oplus E^s.
$$
If every center subbundle \(C_i\) is
\(u\)-definite, then \(\Lambda\) supports only finitely many ergodic
hyperbolic SRB measures \(\mu_1,\ldots,\mu_s\).  These measures are
physical and satisfy
\[
\Leb\left(
B(\Lambda)\setminus\bigcup_{i=1}^s\Basin(\mu_i)
\right)=0.
\]
\end{theoremalph}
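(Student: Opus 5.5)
The plan is to reduce Theorem~\ref{thm:A} to an empirical basin-covering statement for the Lebesgue-typical point, and then to build, for each such point, an ergodic hyperbolic SRB measure whose basin captures it. Fix a Lebesgue-typical $x\in B(\Lambda)$; after iterating we may assume $x\in U$. It is standard that every limit point of the empirical measures $\frac1n\sum_{j<n}\delta_{f^jx}$ is a Gibbs $u$-state for Lebesgue a.e.\ $x$: take a local disk transverse to $E^{cs}$, tangent to the unstable cone, use bounded distortion along $E^u$, and apply the Pesin--Sinai argument to averages of pushed-forward leaf volume. Combined with Fubini over a foliation of $U$ by such disks, this controls almost every point. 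Hence every ergodic component $\nu$ of such a limit is an ergodic Gibbs $u$-state. By the $u$-definite hypothesis, each $C_i$ has a sign $\sigma_i(\nu)\in\{+,-\}$. Since these signs vary only over a finite alphabet, the ergodic Gibbs $u$-states split into finitely many classes indexed by $\sigma\in\{+,-\}^k$. For each class set $E^{cu}_\sigma=E^u\oplus\bigoplus_{\sigma_i=+}C_i$ and $E^{cs}_\sigma=\bigoplus_{\sigma_i=-}C_i\oplus E^s$. These are \emph{not} dominated in general, because the $C_i$ are interleaved, and this is the key subtlety.

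The second step is the empirical basin-covering theorem mentioned in the abstract. The goal is to show that for Lebesgue a.e.\ $x$ there exist a sign pattern $\sigma$ and a subsequence $n_\ell$ along which the empirical measures converge to a Gibbs $u$-state $\mu$ all of whose ergodic components lie in class $\sigma$. Along the same times $n_\ell$, the orbit must have \emph{simultaneously} positive density of hyperbolic times for each $C_i$ with $\sigma_i=+$ (expansion of $Df^{-1}$ along $C_i$ from $f^{n}x$ backwards) and uniform contraction along each $C_i$ with $\sigma_i=-$ (forward from the relevant times). Here I would introduce the "sliding suffix times": a time $n$ such that for every $m\le n$ in a positive-density set the product of $\|Df|_{C_i}\|$ over $[m,n]$ (or its inverse conorm) decays exponentially for each relevant $i$. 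The Pliss lemma is applied bundle by bundle to the Birkhoff sums of $\log\|Df^{\pm1}|_{C_i}\|$ on the suffixes. The synchronization across $i$ comes from choosing a common subsequence where all the $C_i$-averages are close to their $\mu$-exponents. This is possible because $\mu$ is a single limit measure for which all the relevant integrated exponents have definite sign. The $u$-definiteness is needed to guarantee these integrals are uniformly bounded away from $0$ on the class. I expect this synchronization, uniformly in the non-dominated mixed bundle, to be the main obstacle. I would try first to work with $Df^N$ for large $N$ so that each $C_i$ block's Birkhoff averages are eventually negative or positive as needed, and then use a multi-sequence Pliss lemma to obtain a common positive-density set of times.

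The third step uses these times on a $u$-disk $D$ through $x$. By domination between each $C_i$ and its neighbours, hyperbolic times give backward contraction of $E^{cu}_\sigma$-tangent disks, hence uniform-size cu-disks $f^{n}(D_n)$ with bounded distortion. The forward contraction along $E^{cs}_\sigma$ gives local stable sets of uniform size at the same times. Averaging leaf volume on these cu-disks yields, as in Alves--Bonatti--Viana, an SRB measure with exponents of sign $\sigma$. This measure is hyperbolic because the $u$-definite sign is strict. The stable sets then show that a positive-volume neighbourhood of $x$ lies in the basin of one of its ergodic components, using absolute continuity of the Pesin stable lamination.

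Finiteness follows because each ergodic hyperbolic SRB measure has a basin containing, up to zero measure, a set of uniform size: the sizes of disks and stable sets depend only on the constants from $u$-definiteness, which are uniform over the compact set of Gibbs $u$-states by upper semicontinuity of the relevant integrals and a compactness argument. Disjointness of basins then forces finitely many. Finally, Lebesgue-typical points of $B(\Lambda)$ are covered by the previous step.
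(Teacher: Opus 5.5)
Your proposal has genuine gaps in both the basin-covering and the finiteness parts.

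\textbf{Basin covering.} The ``key subtlety'' you name does not exist. Domination $C_i\succ C_{i+1}$ forces $\lambda^-_{C_i}>\lambda^+_{C_{i+1}}$, so for an ergodic Gibbs $u$-state the positive blocks always form an initial segment $C_1,\ldots,C_i$. Hence $E^{cu}_\sigma=E_i:=E^u\oplus C_1\oplus\cdots\oplus C_i$ and $E^{cs}_\sigma=F_i$ form a dominated splitting, and only $k+1$ patterns occur.

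The real difficulty is the one your step 2 assumes away. Nothing lets you find a subsequence whose empirical limit has \emph{all} ergodic components in one class, since a priori every $\mu\in\pw(x)$ may mix several patterns. The missing idea is to take, for a single $\mu\in\pw(x)$, the \emph{smallest} index $i$ carried by a positive-mass set of ergodic components. Then $\lambda^-_{E_i}>0$ holds $\mu$-a.e., but $\lambda^+_{F_i}<0$ holds only on a set of positive $\mu$-measure. What is needed is a basin-covering criterion (Theorem~\ref{thm:B}) that works under exactly this asymmetric hypothesis.

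This asymmetry is also what makes synchronization possible, and a multi-sequence Pliss lemma cannot replace it. Pliss applied bundle by bundle gives sets of times of some positive density, in general far from one, so these sets need not intersect. The paper instead shows that sliding suffix times for the whole bundle $E_i$ have density arbitrarily close to one along $n_j$. These times are built from sliding $L$-step blocks, since one-step conorms underestimate the minimal exponent of a higher-dimensional bundle. The density bound comes from a one-sided maximal inequality applied to the defect $(2c-\frac1LA^E_L)_+$, whose integral is small precisely because $\lambda^-_{E}>0$ holds $\mu$-almost everywhere. Only then do these times meet the positive-density $F$-carriers.

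Your step 3 is also too thin. Contraction along $F_i$ on finite windows gives no local stable sets along the orbit of $x$. Averaging volume on cu-disks along such sporadic times does not by itself yield an SRB measure with the right signs. The paper needs F{\o}lner selection and Gibbs estimates on unions of time intervals to obtain the entropy formula. It then uses the Pesin stable lamination of an ergodic component $\mu_*$ together with suffix dynamical density points. This gives a positive-measure subset of a preball inside $\Basin(\mu_*)$, not a neighbourhood of $x$. Also, the disk must be tangent to the $E_i$-cone, not a $u$-disk.

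\textbf{Finiteness.} Your argument fails at the claimed uniformity. $u$-definiteness gives a strict sign for each ergodic Gibbs $u$-state, but no uniform bound. Suppose ergodic Gibbs $u$-states $\nu_n$ have $\lambda^-_{C_i}(\nu_n)\to0^+$. Semicontinuity then only gives $\lambda^-_{C_i}(\nu)\le0\le\lambda^+_{C_i}(\nu)$ for the limit $\nu$. This is compatible with $u$-definiteness when $\nu$ has ergodic components of both signs, so compactness produces no contradiction. Even uniform exponent bounds would not give Pesin manifolds of uniform size across measures.

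The paper argues differently. Suppose infinitely many ergodic hyperbolic SRB measures $\mu_n$ satisfy $\lambda^-_{E_i}>0>\lambda^+_{F_i}$, and let $\mu$ be their limit. An induction on $i$ uses the accumulation property of Gibbs $E$-states: a Gibbs $E$-state that is a limit of pairwise distinct ergodic Gibbs $E$-states has $\lambda^+_F\ge0$ a.e. Together with $u$-definiteness, this shows that $\mu$ is a Gibbs $E_i$-state with $\lambda^-_{C_{i+1}}>0$ a.e. That contradicts lower semicontinuity of $\lambda^-_{C_{i+1}}$, because $\lambda^-_{C_{i+1}}(\mu_n)<0$ for every $n$. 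For $i=k$, the same argument produces a nonnegative exponent along $E^s$, which is impossible.
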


Theorem \ref{thm:A} recovers the one-center result of Cao--Mi--Yang \cite{CaoMiYang} when
$k=1$. If every $C_i$ is one-dimensional, $u$-definiteness is equivalent
to the absence of zero center Lyapunov exponents for ergodic Gibbs
$u$-states. In this case, Theorem \ref{thm:A} removes the additional empirical-sign
assumption used for basin covering in \cite{MiCao2026}.

Burguet \cite{BurguetCenter} proved finiteness and basin covering for dominated
one-dimensional center subbundles under hyperbolicity of the ergodic
SRB measures. Our assumption is instead imposed on ergodic Gibbs
$u$-states, but allows center subbundles of arbitrary dimension without
requiring a dominated refinement. Moreover, different center subbundles may
have opposite signs for the same ergodic Gibbs $u$-state, so the full
center need not be $u$-definite. Thus neither the one-dimensional
arguments of \cite{BurguetCenter,MiCao2026} nor a successive application of the one-center
result of \cite{CaoMiYang} applies directly.

We next establish a general basin-covering criterion for attracting
sets with a dominated splitting. It is the main ingredient in
Theorem~\ref{thm:A}, but does not require $u$-definiteness. For $x\in B(\Lambda)$,
let $\pw(x)$ denote the set of weak-* accumulation points of its
empirical measures. We write $\lambda_E^-$ and $\lambda_F^+$ for
the minimal and maximal Lyapunov exponents along the indicated
bundles.

\begin{theoremalph}\label{thm:B}
Let $f$ be a $C^{1+\alpha}$ diffeomorphism on a compact Riemannian manifold $\mathbf{M}$ and let \(\Lambda\) be an attracting
set with a dominated splitting
\(
T_\Lambda \mathbf{M}=E\oplus F.
\)
Assume that there is a measurable subset \(\Gamma\) of $B(\Lambda)$ such that for every \(x\in\Gamma\), there exists
\(\mu\in\pw(x)\) such that
\[
\lambda_E^-(y)>0
\quad\text{for \(\mu\)-almost every }y,
\qquad
\mu\{y:\lambda_F^+(y)<0\}>0.
\]
Then, for Lebesgue almost every \(x\in\Gamma\), there exists an
ergodic hyperbolic SRB measure \(\nu\), supported on \(\Lambda\), such
that
\[
 x\in\Basin(\nu),
 \qquad
 \lambda_E^-(\nu)>0>\lambda_F^+(\nu).
\]
\end{theoremalph}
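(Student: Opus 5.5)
The plan is to convert the asymptotic information carried by $\mu\in\pw(x)$ into uniform, finite-time hyperbolicity along a single subsequence of times for the orbit of $x$. From there, the standard route goes through hyperbolic times, Pliss-type density, Pesin/Hadamard--Perron discs, and an absolute-continuity argument.

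\emph{Step 1 (averaged exponents).} Fix $x\in\Gamma$ and $\mu\in\pw(x)$, and take a sequence $n_j\to\infty$ such that the empirical measures $\frac{1}{n_j}\sum_{i<n_j}\delta_{f^ix}$ converge to $\mu$. Because $\lambda_E^->0$ holds $\mu$-a.e., subadditivity and the ergodic decomposition give some $N$ with $\int\log\|Df^{-N}|_E\|\,d\mu<0$. Since $\mu\{\lambda_F^+<0\}>0$, there is a positive-measure set $A$ on which $\log\|Df^{N}|_F\|$ has negative averages for large $N$. The expansion along $E$ is then a continuous observable with negative $\mu$-integral. Extending $E$ and $F$ continuously to a neighbourhood of $\Lambda$ using the cone fields, we deduce that along $n_j$ the orbit of $x$ has a positive density of times at which $E$ is backward-contracted. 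This gives Pliss $E$-hyperbolic times with density bounded below.

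\emph{Step 2 (synchronization).} The contraction along $F$ is harder, because it holds only on the set $A$, not globally, and it must occur at the same times as the $E$-expansion. Here I would use the paper's sliding suffix times. I pick $n$ large in the sequence $n_j$ and look for times $m\le n$ such that, from $m$ onward, the orbit segment $f^m x,\dots,f^n x$ has $F$-contraction in forward time and $E$-expansion in backward time with uniform rates. To get these I apply Pliss's lemma to suitably weighted sums: restricted to suffixes whose empirical measure charges $A$, and using a continuous approximation of $\mathbf 1_A$ and the domination. The goal is a positive-density set of times $m$ that are simultaneously ``backward $E$-hyperbolic'' and ``forward $F$-hyperbolic up to $n$.'' I expect this step to be the main obstacle. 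Ordinary prefix Pliss times control the $E$-direction, but $F$-contraction requires suffix control, and a density argument must make the two sets intersect. The sliding of $n$ along $n_j$ is meant to secure this.

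\emph{Step 3 (geometry and SRB measures).} At the synchronized times, graph transforms give $E$-discs (unstable-like) and $F$-discs (stable-like) of uniform size through $f^m x$, with bounded distortion. The integrability of these times under Lebesgue measure is handled by a Fubini argument over a foliation by $E$-discs near $x$, applied to the measurable set $\Gamma$. Let $\Gamma'$ be the set of density points of $\Gamma$ that lie on such discs. Pushing forward Lebesgue measure on the $E$-discs at the hyperbolic times and averaging yields, in the limit, an invariant measure with an absolutely continuous component along $E$ and a positive $E$-exponent. One of its ergodic components, $\nu$, is then an SRB measure with $\lambda_E^-(\nu)>0>\lambda_F^+(\nu)$; the $F$-exponent is negative because the construction puts positive weight on $F$-contracting points, which is inherited from $A$.

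\emph{Step 4 (basin).} Finally, the uniform-size $F$-stable discs through Lebesgue-typical points near $f^m x$, combined with absolute continuity of the Pesin stable lamination of $\nu$, show that $x$ itself, and not only a positive-measure set nearby, lies in $\Basin(\nu)$. This uses the fact that the stable discs at hyperbolic times accumulate on $\nu$-generic unstable discs. A Lebesgue-density argument over $\Gamma$ then concludes.
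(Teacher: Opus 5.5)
There is a genuine gap, and it is where you expected it: the tools of Step~1 do not achieve the synchronization of Step~2. Pliss applied to $\int\log\|Df^{-N}|_E\|\,d\mu<0$ only gives $E$-hyperbolic times of some density $\vartheta<1$ along $n_j$. The $F$-contracting times have some density $\gamma>0$, and nothing forces the two sets to intersect unless $\vartheta+\gamma>1$; weighting by an approximation of $\mathbf 1_A$ does not change this. The missing idea is to use that $\lambda_E^->0$ holds $\mu$-almost everywhere, not just on average. For small $c$ and large $L$, the defect $(2c-\frac1L A^E_L)_+$ has $\mu$-integral below $c\varepsilon$. A one-sided maximal inequality along $n_j$ then shows that the sliding suffix times have density at least $1-\varepsilon$; these are the times $n$ at which every suffix of length at least $N$ has average $L$-window cone expansion at least $c$. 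Taking $\varepsilon<\gamma/2$, they meet the $F$-carriers (obtained from the maximal ergodic theorem for $f^{-1}$ plus Pliss) with density at least $\gamma/2$. Both conditions control the \emph{past} of the synchronized time, and this is what later yields uniform backward $E$-expansion and $F$-contraction at limit points. Your forward-in-time $F$-discs up to $n$ play no role there.

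Steps~3--4 also fail as stated. Since $n_j$ and $\mu$ depend on $x$, at a fixed time the set of points with many synchronized times may have measure tending to zero. Borel--Cantelli only yields sets $E_j$ with $\frac1{n_j}\log\Leb_D(E_j)\to0$. This breaks your construction in three ways. First, normalized push-forwards of $\Leb_D|_{E_j}$ have leaf densities of order $1/\Leb_D(E_j)$, so no absolutely continuous component survives in the limit. Second, averaging only over selected times is not invariant. Third, positive weight on $F$-contracting points does not show that the ergodic component carrying absolutely continuous conditionals has $\lambda_F^+<0$. The paper instead selects F{\o}lner sets $Q_j$ with boundary in the synchronized times, $\#\partial Q_j=o(n_j)$, and almost filled by those times. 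Then every limit $\nu$ of $\frac1{\#Q_j}\sum_{t\in Q_j}f^t_*(\Leb_D|_{E_j}/\Leb_D(E_j))$ is invariant, with $\lambda_E^-\ge c/2$ and $\lambda_F^+\le-\eta/2$ almost everywhere. A multi-component Gibbs estimate gives $h_\nu\ge\int\log\Jac(Df|_E)\,d\nu$, where the subexponential loss is harmless, and Ledrappier--Young then gives the SRB property. Finally, you cannot prove that the given $x$ lies in $\Basin(\nu)$, since finite-time $F$-discs are not stable manifolds. One only gets $\Leb_D(\Gamma\cap\Basin(\mu_*))>0$, via suffix dynamical density points of $\Gamma$, a uniform basin fraction on terminal discs near a Pesin block, and bounded distortion. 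The almost-everywhere statement then follows by contradiction, applied to $\Gamma$ minus the basins of the countably many hyperbolic SRB measures with this index.
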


The hypotheses are asymmetric: with respect to the same empirical
accumulation point, expansion along $E$ is required almost everywhere,
whereas contraction along $F$ is required only on a set of positive
measure. This asymmetry matches the center-sign patterns imposed by
domination and $u$-definiteness, allowing the criterion to yield
existence and basin covering in Theorem~\ref{thm:A}.

The key ingredient is a notion of \emph{sliding suffix time}, which
controls expansion along sufficiently long suffixes in
higher-dimensional bundles. Under the expansion hypothesis, the
density of these times along the given empirical subsequence can
be made arbitrarily close to one. This yields a positive density
of synchronized expansion--contraction times along the same
subsequence. Using these synchronized times and the Gibbs estimates associated
with sliding suffix times, we establish a F{\o}lner--Gibbs property
following the variational approach of Burguet--Yang~\cite{BurguetYang}. Combined with a dynamical density argument, this gives the hyperbolic SRB measures and basin covering asserted in
Theorem~\ref{thm:B}.

In Section~\ref{sec:examples}, we construct a connected attracting set
with two higher-dimensional \(u\)-definite center subbundles whose sum
is not \(u\)-definite. Neither subbundle admits a nontrivial dominated
refinement.

\medskip
The paper is organized as follows.
Section~\ref{sec:preliminaries} recalls the preliminaries on
dominated splittings, Lyapunov exponents, entropy, and Gibbs states.
Section~\ref{sec:proof-A} deduces Theorem~\ref{thm:A} from
Theorem~\ref{thm:B} through basin covering and a finiteness argument.
Section~\ref{sec:temporal} establishes temporal synchronization
of expansion along \(E\) and contraction along \(F\).
Section~\ref{sec:geometry} develops the geometry of sliding suffix
times and proves dynamical density and multi-component Gibbs estimates.
Section~\ref{sec:marked-basin} combines F{\o}lner selection with
these estimates to prove a leafwise basin theorem, from which
Theorem~\ref{thm:B} follows.
Finally, Section~\ref{sec:examples} constructs examples and
establishes the robustness of their \(u\)-definiteness.

\paragraph{Use of AI tools.} AI were used for improving the presentation of the text. They also played a crucial role in constructing examples provided in Section~\ref{sec:examples}. The authors take full responsibility for the mathematical content and the
final text.

\section{Preliminaries}\label{sec:preliminaries}

Let $\mathbf{M}$ be a compact Riemannian manifold without boundary, and let $f \in \Diff^1(\mathbf{M})$. We denote by $\Leb$ the normalized Riemannian volume on $\mathbf{M}$, and by $\Leb_D$ the induced Riemannian volume on a smooth submanifold $D \subset \mathbf{M}$. For a linear isomorphism $A$ between finite-dimensional normed vector spaces, its conorm is defined by
\[
  m(A) := \inf_{v \neq 0} \frac{\|Av\|}{\|v\|} = \|A^{-1}\|^{-1}.
\]
For each $x \in \mathbf{M}$ and $n \ge 1$, we define the empirical measure along the orbit of $x$ as
\[
  \mu_n^x := \frac{1}{n}\sum_{j=0}^{n-1} \delta_{f^j x},
\]
where $\delta_y$ stands for the Dirac mass at $y$. Let $\pw(x)$ be the set of weak-$\ast$ accumulation points of the sequence $\{\mu_n^x\}_{n \ge 1}$ in the space of Borel probability measures. By standard arguments (see, e.g., \cite[Theorem~6.9]{Walter}), every measure in $\pw(x)$ is $f$-invariant.

We write \(\Jac(A)\) for the absolute Riemannian Jacobian of an
injective linear map \(A\). For an interval \(I\subset\mathbb R\),
write \(I_{\mathbb Z}=I\cap\mathbb Z\). Intervals indexing iterates
are understood as integer intervals when the subscript is omitted.

\subsection{Dominated splittings and partial hyperbolicity}
\label{subsec:dominated-splittings}

Let $\Lambda \subset \mathbf{M}$ be a compact $f$-invariant set. A continuous $Df$-invariant splitting $T_\Lambda \mathbf{M} = E \oplus_{\succ} F$ is said to be \emph{dominated} if there exist constants $C > 0$ and $\tau \in (0,1)$ such that
\[
  \bigl\|Df_x^n|_{F(x)}\bigr\| \le C \tau^n m\bigl(Df_x^n|_{E(x)}\bigr)
  \qquad \text{for all } x \in \Lambda \text{ and } n \ge 1.
\]
An invariant subbundle $E^u \subset T_\Lambda \mathbf{M}$ is \emph{uniformly expanding}, and an invariant subbundle $E^s \subset T_\Lambda \mathbf{M}$ is \emph{uniformly contracting}, if there exist constants $C > 0$ and $\tau \in (0,1)$ such that
\[
  m\bigl(Df_x^n|_{E^u(x)}\bigr) \ge C^{-1} \tau^{-n}
  \quad \text{and} \quad
  \bigl\|Df_x^n|_{E^s(x)}\bigr\| \le C \tau^n
  \qquad \text{for all } x \in \Lambda \text{ and } n \ge 1,
\]
respectively. The set $\Lambda$ is called \emph{partially hyperbolic} if it admits a multi-bundle dominated splitting
\[
  T_\Lambda \mathbf{M} = E^u \oplus_{\succ} C_1 \oplus_{\succ} \cdots \oplus_{\succ} C_k \oplus_{\succ} E^s,
\]
where $E^u$ is uniformly expanding and $E^s$ is uniformly contracting.

\subsection{Lyapunov exponents and metric entropy}

By Oseledets' Multiplicative Ergodic Theorem \cite{O68}, for any $f$-invariant probability measure $\mu$, there exists a Borel set $\mathcal{R} \subset \mathbf{M}$ of full $\mu$-measure such that for every $x \in \mathcal{R}$, the Lyapunov exponents
\[
  \lambda_1(x) \ge \lambda_2(x) \ge \cdots \ge \lambda_{\dim \mathbf{M}}(x)
\]
are well-defined (counted with multiplicity). An invariant measure $\mu$ is called \emph{hyperbolic} if $\lambda_i(x) \neq 0$ for all $1 \le i \le \dim \mathbf{M}$ and $\mu$-almost every $x$.

For a continuous $Df$-invariant bundle $G \subset T_\Lambda \mathbf{M}$, the maximal and minimal Lyapunov exponents along $G$ at a regular point $x \in \Lambda$ are given by
\[
  \lambda_G^+(x) := \lim_{n\to\infty} \frac{1}{n} \log \bigl\|Df_x^n|_{G(x)}\bigr\|,
  \qquad
  \lambda_G^-(x) := \lim_{n\to\infty} \frac{1}{n} \log m\bigl(Df_x^n|_{G(x)}\bigr).
\]
For an invariant probability measure $\mu$ supported on $\Lambda$, we define the integrated maximal and minimal exponents as
\[
  \lambda_G^\pm(\mu) := \int_\Lambda \lambda_G^\pm(x) \, d\mu(x).
\]
If $\mu$ is ergodic, then $\lambda_G^\pm(x) = \lambda_G^\pm(\mu)$ for $\mu$-almost every $x$. By Kingman's Subadditive Ergodic Theorem \cite{Kingman}, for fixed $f$ and $G$, the map $\mu \mapsto \lambda_G^+(\mu)$ is upper semicontinuous and the map $\mu \mapsto \lambda_G^-(\mu)$ is lower semicontinuous with respect to the weak-$\ast$ topology on the space of invariant probability measures supported on $\Lambda$.

\medskip
Let $\mathcal{P}$ be a finite Borel partition of $\mathbf{M}$. For any finite index set $Q \subset \mathbb{Z}_{\ge 0}$, we write $\mathcal{P}^Q := \bigvee_{t\in Q} f^{-t}\mathcal{P}$, and abbreviate $\mathcal{P}^r := \bigvee_{t=0}^{r-1} f^{-t}\mathcal{P}$ for $r \ge 1$. For a Borel probability measure $m$, the measure-theoretic entropy of $\mathcal{P}$ is
$$
  H_m(\mathcal{P}) := -\sum_{P \in \mathcal{P}} m(P) \log m(P) = -\int_{\mathbf{M}} \log m(\mathcal{P}(x)) \, dm(x),
$$
where $\mathcal{P}(x)$ denotes the partition atom containing $x$, with the convention $0 \log 0 = 0$. For an $f$-invariant probability measure $\mu$, its metric entropy with respect to $f$ is defined as
$$
  h_\mu(f) := \sup_{\mathcal{P}} h_\mu(f, \mathcal{P}),
  \qquad \text{where} \quad
  h_\mu(f, \mathcal{P}) := \lim_{r\to\infty} \frac{1}{r} H_\mu(\mathcal{P}^r),
$$
and the supremum is taken over all finite Borel partitions of $\mathbf{M}$.

\subsection{Gibbs $u$-states and Gibbs $E$-states}

Throughout this subsection, we assume that $f \in \Diff^{1+\alpha}(\mathbf{M})$ for some $\alpha > 0$. Let $\Lambda \subset \mathbf{M}$ be a partially hyperbolic attracting set. We denote by $\mathcal{G}^u(f)$ the collection of $f$-invariant probability measures supported on $\Lambda$ whose conditional measures along strong unstable manifolds are absolutely continuous with respect to the induced Lebesgue measure on these manifolds. Elements of $\mathcal{G}^u(f)$ are called \emph{Gibbs $u$-states}. We collect several fundamental properties of Gibbs $u$-states below; see \cite{PesinSinai, CYZ, CaoMiYang, MiCao2026}.

\begin{proposition}\label{prop:gibbs-u-basic}
  Let $f \in \Diff^{1+\alpha}(\mathbf{M})$ and let $\Lambda$ be a partially hyperbolic attracting set.
  \begin{enumerate}[\rm (i)]
    \item Almost every ergodic component of a Gibbs $u$-state is a Gibbs $u$-state.
    \item The set $\mathcal{G}^u(f)$ of Gibbs $u$-states is compact in the weak-$\ast$ topology.
    \item There exists a set $\mathcal{R}_u \subset B(\Lambda)$ of full Lebesgue measure in the basin of attraction $B(\Lambda)$ such that
    \[
      \pw(x) \subset \mathcal{G}^u(f) \qquad \text{for every } x \in \mathcal{R}_u.
    \]
  \end{enumerate}
\end{proposition}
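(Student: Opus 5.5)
These three properties are classical, and I would assemble them from the Pesin--Sinai construction and the absolute continuity of the strong unstable foliation, in the order (ii), (i), (iii). Throughout, I would use that the strong unstable lamination $\mathcal F^u$ of $\Lambda$ is a continuous lamination by $C^{1+\alpha}$ disks tangent to $E^u$. I would also use that, because $f$ is $C^{1+\alpha}$, the unstable Jacobian $\log\Jac(Df|_{E^u})$ is Hölder along leaves. This gives uniform bounded distortion: for $y,z$ in a common local leaf,
\[
\rho(y,z)=\prod_{n\ge1}\frac{\Jac(Df|_{E^u}(f^{-n}z))}{\Jac(Df|_{E^u}(f^{-n}y))}
\]
converges uniformly and is bounded away from $0$ and $\infty$. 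A standard argument shows that $\mu\in\mathcal G^u(f)$ if and only if, in every foliated box, the disintegration of $\mu$ along local strong unstable plaques has densities proportional to $\rho(y_0,\cdot)$ with respect to $\Leb_{\mathcal F^u}$.

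\emph{Compactness (ii).} I would take $\mu_n\in\mathcal G^u(f)$ with $\mu_n\to\mu$. Inside a fixed foliated box $B\cong D^u\times T$, the measures $\mu_n|_B$ are of the form $\int_T \rho_t\,d\Leb_{D^u\times\{t\}}\,d\hat\mu_n(t)$, where the $\rho_t$ are normalized densities depending continuously on $t$ and independent of $n$. Passing to the limit in the transversal factor gives $\hat\mu_n\to\hat\mu$, so $\mu|_B$ has the same form. Hence $\mu$ is a Gibbs $u$-state, and invariance and support in $\Lambda$ pass to the limit. The technical point is boundary effects of plaques, handled by shrinking boxes whose boundaries have zero $\mu$-measure.

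\emph{Ergodic components (i).} The ergodic decomposition is measurable with respect to the $\sigma$-algebra of invariant sets. I would check that this $\sigma$-algebra mod $\mu$ is contained in the $\sigma$-algebra of sets that are unions of entire strong unstable leaves. The argument is the Hopf argument: backward Birkhoff averages of continuous functions are constant along strong unstable leaves, since backward orbits on a leaf converge exponentially, and they coincide $\mu$-a.e. with the forward averages. Conditioning $\mu$ first on this coarser invariant $\sigma$-algebra and then along plaques, the transitivity of conditional expectations shows that a.e. ergodic component has the same plaque densities $\rho$. Hence a.e. ergodic component is a Gibbs $u$-state.

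\emph{Empirical measures (iii).} This is the step I expect to be the main obstacle. I would follow \cite{CYZ}. First, foliate a neighborhood $U$ of $\Lambda$ by local disks $D$ tangent to an invariant cone field around $E^u$, extended from $\Lambda$ to $U$. It then suffices, by Fubini, to show that for each such disk $D$, $\Leb_D$-a.e. $x\in D$ has $\pw(x)\subset\mathcal G^u(f)$.

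For a fixed disk $D$, the push-forwards $\frac1n\sum_{j<n}f^j_*\Leb_D$ have limits supported on $\Lambda$ in $\mathcal G^u(f)$. The reason is that $f^jD$ becomes a union of nearly-$u$ disks with bounded distortion densities, and these are precisely the approximations used in (ii).

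The difficulty is passing from averaged measures to individual orbits. Here I would argue by contradiction with a countable dense family of continuous functions $\varphi$ and rationals. Suppose a positive $\Leb_D$-measure set of $x$ had an accumulation point $\nu$ at a definite distance from the compact convex set $\mathcal G^u(f)$, detected by some $\varphi$. Then the subset $A_n\subset D$ where $\mu_n^x$ is far from $\mathcal G^u(f)$ has measure bounded below for infinitely many $n$. Normalizing $\Leb_D|_{A_n}$ and pushing forward, a density-point/large-deviation style estimate on $u$-disks shows that the resulting limits are still Gibbs $u$-states. This uses the fact that $f^jA_n$ decomposes into pieces of $u$-disks with controlled distortion, with measure lost to boundary effects being small. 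That conclusion contradicts the choice of $A_n$. Finally, taking the union over the countable family gives the full-measure set $\mathcal R_u$.
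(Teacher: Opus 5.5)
\textbf{The gap is in part (iii).} Parts (i) and (ii) are the standard Pesin--Sinai arguments and are fine as sketched. The paper does not prove this proposition; it cites \cite{PesinSinai,CYZ,CaoMiYang,MiCao2026}.

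The step that fails is your claim that
\[A_n:=\{x\in A:\int\varphi\,d\mu_n^x>s+\epsilon_0\},\qquad s:=\sup_{\mu\in\mathcal G^u(f)}\int\varphi\,d\mu,\]
has $\Leb_D$-measure bounded below for infinitely many $n$. Your hypothesis only gives $A\subset\limsup_n A_n$, and this does not imply $\limsup_n\Leb_D(A_n)>0$. The bad times depend on $x$ and need not be synchronized: think of intervals of length $1/k$ sweeping across $A$ again and again. Reverse Fatou, $\Leb_D(\limsup_n A_n)\ge\limsup_n\Leb_D(A_n)$, goes in the wrong direction.

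The only general conclusion is Borel--Cantelli: $\sum_n\Leb_D(A_n)=\infty$, so for each $\varepsilon>0$ one has $\Leb_D(A_n)\ge e^{-\varepsilon n}$ infinitely often. The normalized measures $m_n=\Leb_D|_{A_n}/\Leb_D(A_n)$ can then have densities as large as $e^{\varepsilon n}$. No soft argument makes the limits of $\frac1n\sum_{j<n}f^j_*m_n$ Gibbs $u$-states.

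Under a uniform lower bound the conclusion would in fact be easy. A limit $\nu$ is invariant and satisfies $\nu\le C\mu$ for a Gibbs $u$-state $\mu$, so $d\nu/d\mu$ is invariant, hence $u$-saturated by your Hopf argument from (i). But that is not the situation here. Controlled distortion on pieces of $f^jD$ does not help either, since $A_n$ is an arbitrary measurable set.

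\textbf{The missing ingredient} is a volume estimate that survives losses of size $e^{-\varepsilon n}$. The standard route uses entropy along the strong unstable foliation.
\begin{itemize}
\item A Gibbs estimate on $u$-disks for partitions of small diameter, $\Leb_D(\cP^n(x)\cap D)\lesssim e^{\varepsilon n-\sum_{j<n}\log\Jac(Df|_{T_{f^jx}f^jD})}$, combined with a Misiurewicz-type argument for \emph{partial} entropy, shows that every such limit satisfies $h^u_\nu(f)\ge\int\log\Jac(Df|_{E^u})\,d\nu-O(\varepsilon)$.
\item A lower bound on the full entropy $h_\nu(f)$ would not suffice, because center directions can contribute.
\item In the other direction, the Ledrappier-type characterization says $h^u_\nu(f)\le\int\log\Jac(Df|_{E^u})\,d\nu$, with equality iff $\nu$ is a Gibbs $u$-state. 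Together with upper semicontinuity of $\nu\mapsto h^u_\nu(f)$, this gives a uniform deficit $a>0$ on the compact set of invariant measures on $\Lambda$ with $\int\varphi\,d\nu\ge s+\epsilon_0/2$.
\item Taking $\varepsilon$ small compared to $a$ yields the contradiction. Equivalently, $\Leb_D(A_n)$ decays exponentially and Borel--Cantelli gives the full-measure set.
\end{itemize}

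The paper's own proof of Theorem~\ref{thm:B} has exactly this structure. There Borel--Cantelli only yields $\Leb_D(B_{j,n})\ge e^{-n/j}$, and Theorem~\ref{thm:multi-gibbs} together with an entropy formula compensates for the subexponential loss.
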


More generally, let $\Lambda$ be a compact invariant set admitting a dominated splitting $T_\Lambda \mathbf{M} = E \oplus_{\succ} F$. At any regular point $x \in \Lambda$ satisfying $\lambda_E^-(x) > 0$, Pesin theory guarantees the existence of a \emph{Pesin unstable manifold} $W^E_{\mathrm{loc}}(x)$ tangent to $E(x)$ with $\dim W^E_{\mathrm{loc}}(x) = \dim E$.

\begin{definition}[Gibbs $E$-state]
  An $f$-invariant probability measure $\mu$ supported on $\Lambda$ is called a \emph{Gibbs $E$-state} (or a Gibbs $cu$-state associated with $E$) if:
  \begin{enumerate}[\rm (1)]
    \item $\lambda_E^-(x) > 0$ for $\mu$-almost every $x \in \Lambda$;
    \item The conditional measures of $\mu$ along local Pesin unstable manifolds $\{W^E_{\mathrm{loc}}(x)\}$ are absolutely continuous with respect to the induced Lebesgue measure on these manifolds.
  \end{enumerate}
\end{definition}

In particular, when $E$ is uniformly expanding, a Gibbs $E$-state reduces to a standard Gibbs $u$-state. The following properties of Gibbs $E$-states are derived from \cite[Propositions~2.4 and~2.6, Theorem~3.3]{MiCao2026}.

\begin{proposition}\label{prop:gibbs-E-basic}
  Let $T_\Lambda \mathbf{M} = E \oplus_{\succ} F$ be a dominated splitting.
  \begin{enumerate}[\rm (i)]
    \item Almost every ergodic component of a Gibbs $E$-state is a Gibbs $E$-state.
    \item Let $\{\mu_n\}_{n \ge 1}$ be a sequence of ergodic Gibbs $E$-states converging in the weak-$\ast$ topology to an invariant measure $\mu$. If there exists a constant $\chi > 0$ such that $\lambda_E^-(x) \ge \chi$ for $\mu$-almost every $x$, then $\mu$ is also a Gibbs $E$-state.
  \end{enumerate}
\end{proposition}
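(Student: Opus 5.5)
The two items are of different nature. Item (i) is structural and I would prove it by a Hopf-type argument on unstable leaves. Item (ii) is a semicontinuity statement, and I would reduce it to an entropy formula along $E$, in the spirit of Ledrappier--Young.

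\emph{Item (i).} Let $\mu$ be a Gibbs $E$-state and fix a measurable partition $\xi$ subordinate to the local Pesin unstable manifolds $W^E_{\mathrm{loc}}$. The first step is to show that $\xi$ refines the ergodic decomposition mod $0$. Two points on the same local Pesin unstable manifold are backward asymptotic, so they have the same backward Birkhoff averages of every continuous function. For $\mu$-almost every point, backward and forward averages agree and determine its ergodic component. Hence the ergodic decomposition map $x\mapsto\mu_x$ is constant on $\mu$-almost every atom of $\xi$. By essential uniqueness of disintegrations, for $\mu$-almost every ergodic component $\mu_x$ the conditional measures of $\mu_x$ along $\xi$ coincide with those of $\mu$. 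These are absolutely continuous with respect to leaf volume. Moreover $\lambda_E^->0$ holds $\mu_x$-almost everywhere, because it holds $\mu$-almost everywhere. This gives (i).

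\emph{Item (ii).} I would use the characterization: an invariant measure $\nu$ with $\lambda_E^->0$ almost everywhere is a Gibbs $E$-state if and only if its partial entropy along $E$ satisfies
\[
h_\nu(f,W^E)=\int\log\Jac\bigl(Df|_{E}\bigr)\,d\nu .
\]
This comes from the Ledrappier--Young theory restricted to the expanding subbundle $E$. The inequality ``$\le$'' (a Ruelle-type inequality) always holds, and equality characterizes absolute continuity.

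For the ergodic Gibbs $E$-states $\mu_n$, equality holds. The right-hand side is continuous in the weak-$*$ topology, since $E$ is continuous on $\Lambda$. So it suffices to prove
\[
h_\mu(f,W^E)\ \ge\ \limsup_n h_{\mu_n}(f,W^E).
\]
Combined with the Ruelle inequality for $\mu$, this forces equality for $\mu$, which is legitimate because $\lambda_E^-\ge\chi>0$ holds $\mu$-almost everywhere. Hence $\mu$ is a Gibbs $E$-state.

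\emph{Main obstacle.} The hard step is this upper semicontinuity of partial entropy. My plan is as follows.

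First, I would use domination to build locally invariant fake foliations tangent to $E$-cones of uniform size near $\Lambda$. I would then compute the partial entropy through partitions subordinate to these plaques rather than through Pesin manifolds of non-uniform size.

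Second, I need uniform control on $\mu_n$. By lower semicontinuity of $\nu\mapsto\lambda_E^-(\nu)$, for large $n$ we have $\lambda_E^-(\mu_n)\ge\chi/2$. Applying the Pliss lemma then yields a positive density of hyperbolic times for $Df|_E$, with uniform constants. At these times, fake $E$-plaques of uniform radius are genuinely expanded, so they coincide with Pesin unstable disks.

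With this uniform geometry, the partial entropy can be written as a conditional entropy $H_\nu(\mathcal P\mid f\eta)$ for a fixed finite partition $\mathcal P$ with $\mu$-null boundary and a fixed plaque partition $\eta$. Such a conditional entropy is upper semicontinuous by the standard argument.

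I expect the delicate point to be passing from the fake foliation to the true unstable leaves uniformly in $n$. If this fails, the fallback is to work directly with Pliss-time disks, as in the hyperbolic-times approach to Gibbs $cu$-states.
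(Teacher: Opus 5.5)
\paragraph{Item (i).} Your proof is correct and is the standard one. Backward asymptoticity along $W^E_{\mathrm{loc}}$ makes the ergodic decomposition measurable with respect to a subordinate partition, and disintegrations are transitive. The paper gives no proof of this proposition; both items are imported from \cite{MiCao2026}.

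\paragraph{Item (ii): the gap.} The reduction to the entropy characterization $h_\nu(f,W^E)=\int\log\Jac(Df|_E)\,d\nu$ is legitimate. But all the content then sits in $h_\mu(f,W^E)\ge\limsup_n h_{\mu_n}(f,W^E)$, and your argument for it has a genuine gap. The only uniform control you extract for the $\mu_n$ is $\lambda_E^-(\mu_n)\ge\chi/2$ together with a positive density of Pliss times. That data is equally available when $\mu$ has an ergodic component with $\lambda_E^-=0$, because lower semicontinuity only bounds the integrated exponent. In that case $\mu$ is not a Gibbs $E$-state, so this data cannot drive the semicontinuity. Entropy along a non-uniform Pesin lamination can be lost in the limit on the part of $\mu_n$ whose unstable disks shrink, and a fixed positive fraction of good geometry does not see that part. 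The pointwise hypothesis $\lambda_E^-\ge\chi$ $\mu$-a.e. must be converted into uniform geometry for the $\mu_n$ on almost all of their mass, uniformly in $n$.

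\paragraph{The geometric step.} This step also fails as stated.
\begin{itemize}
\item A forward hyperbolic time $n$ of $x$ gives backward contraction of a plaque around $f^nx$ only over the $n$ steps back to $x$. A Pesin unstable disk needs contraction along the entire past, so fake plaques at such times are not thereby identified with $W^E_{\mathrm{loc}}$.
\item $h_\nu(f,W^E)$ is $H_\nu(f^{-1}\xi\mid\xi)$ for an increasing subordinate partition $\xi$. It is not a one-step quantity like $H_\nu(\mathcal P\mid f\eta)$.
\end{itemize}

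\paragraph{The missing device.} It is backward and of nearly full mass.
\begin{enumerate}
\item Pick $L$ and $c\le\chi/2$ with $\frac1c\int d_L\,d\mu<\varepsilon$, where $d_L=(2c-\frac1L\log m(Df^L|_E))_+$. This is possible because $\frac1L\log m(Df^L|_E)\to\lambda_E^-$ in $L^1(\mu)$. The block length $L$ is needed since one-step conorms need not expand when $\dim E>1$.
\item Since $d_L$ is continuous, $\int d_L\,d\mu_n\to\int d_L\,d\mu$.
\item The maximal inequality for $f^{-1}$, followed by the residue-class argument of Lemma~\ref{lem:robust-suffix-conorm}, gives a compact set $H_\varepsilon$. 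It satisfies $\mu_n(H_\varepsilon)\ge1-\varepsilon$ for all large $n$, hence $\mu(H_\varepsilon)\ge1-\varepsilon$. On it, $\log m(Df^r|_{E(f^{-r}y)})\ge\frac c2r$ for every $r\ge N$. This is a backward version of Proposition~\ref{lem:sliding-suffix-saturation}.
\item Points of $H_\varepsilon$ carry $W^E_{\mathrm{loc}}$-disks of uniform size that vary continuously. By bounded distortion, the conditional densities of the $\mu_n$ on these disks are uniformly bounded.
\end{enumerate}
With this, you can either run the entropy semicontinuity on $H_\varepsilon$ up to an $O(\varepsilon)$ error, or pass the uniform absolute continuity directly to the weak-$*$ limit and let $\varepsilon\to0$. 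Without it, (ii) is not proved.
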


\begin{proposition}\label{thm:gibbs-E-accumulation}
  Let $T_\Lambda \mathbf{M} = E \oplus_{\succ} F$ be a dominated splitting. Suppose that a sequence of pairwise distinct ergodic Gibbs $E$-states $\{\mu_n\}_{n \ge 1}$ converges weak-$\ast$ to a Gibbs $E$-state $\mu$. Then
  \[
    \lambda_F^+(x) \ge 0 \qquad \text{for } \mu\text{-almost every } x.
  \]
\end{proposition}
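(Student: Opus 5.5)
Since the statement is attributed to \cite[Theorem~3.3]{MiCao2026}, I would reprove it by the standard argument that distinct ergodic Gibbs $E$-states have disjoint basins. Combined with Pesin theory, this forces the limit $\mu$ to have no uniformly contracted direction transverse to its $E$-disks. Suppose, for contradiction, that the set $A=\{x:\lambda_F^+(x)<0\}$ has positive $\mu$-measure. By Proposition~\ref{prop:gibbs-E-basic}(i), almost every ergodic component of $\mu$ is a Gibbs $E$-state. Hence there is an ergodic Gibbs $E$-state $\nu$, an ergodic component of $\mu$ with $\nu(A)=1$, that is hyperbolic, with $\lambda_E^-(\nu)>0>\lambda_F^+(\nu)$. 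Being an SRB-type measure with absolutely continuous conditionals on its Pesin unstable disks, $\nu$ is physical. More precisely, there is a Pesin block $\Lambda_\ell$ of positive $\nu$-measure on which local stable manifolds $W^F_{\mathrm{loc}}$ (tangent to $F$, of uniform size $r_\ell$) and local unstable manifolds $W^E_{\mathrm{loc}}$ are defined and vary continuously. Absolute continuity of the stable lamination then gives that the union of $W^F_{\mathrm{loc}}(y)$, $y\in\Lambda_\ell\cap\Basin(\nu)$, contains a set of positive leaf volume on every $E$-dimensional disk close to and transverse enough to the $W^E_{\mathrm{loc}}$ disks.

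Next, I would use the convergence $\mu_n\to\mu$ to transfer Gibbs $E$-disks of $\mu_n$ near $\Lambda_\ell$. Since $\mu$ has a component $\nu$, a density point $z$ of $\Lambda_\ell$ in $\supp\nu$ has $\mu(B(z,\delta))>0$, and hence $\mu_n(B(z,\delta))$ is bounded below for large $n$. Each $\mu_n$ is an ergodic Gibbs $E$-state, so $\mu_n$-typical points in $B(z,\delta)$ carry Pesin unstable disks $W^E_{\mathrm{loc}}$ on which the conditional measure of $\mu_n$ is equivalent to leaf volume, with $\mu_n$-typical points of positive leaf volume. To make these disks uniform, I need a uniform lower bound on their size. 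This is where domination is used. Since $\mu\{\lambda_E^-\ge\chi\}$ is large for some $\chi>0$, I would use Pliss-type hyperbolic times for $E$ with a uniform positive density. Combined with domination and the cone field around $E$, this gives that a definite $\mu_n$-proportion of points lies on $E$-disks of uniform radius $r$, with uniformly bounded distortion. This is the step I expect to be the main obstacle: the hypothesis only gives information on $\mu$, so the uniform density of hyperbolic times for $\mu_n$ must come from semicontinuity. I would try the approach of the cited paper, using Proposition~\ref{prop:gibbs-E-basic}(ii)-type arguments with a fixed $\chi$ coming from the positive-measure set where $\mu$ has $\lambda_E^-\ge\chi$.

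Given such a uniform disk $D_n$ of $\mu_n$ passing within $\delta\ll r_\ell$ of $z$, with tangent in the $E$-cone, the disk $D_n$ is transverse to the stable lamination $\{W^F_{\mathrm{loc}}(y)\}_{y\in\Lambda_\ell}$. Therefore a positive leaf-volume subset of $D_n$ lies in stable manifolds of $\nu$-typical points, and so lies in $\Basin(\nu)$. On the other hand, $\mu_n$-almost every point of $D_n$ with respect to leaf volume lies in $\Basin(\mu_n)$. These two positive-volume subsets of $D_n$ intersect only if $\mu_n=\nu$. To force the intersection, I would use the full leaf-volume statement: the absolutely continuous conditional of $\mu_n$ on $D_n$ has density bounded away from zero on a definite subset, and the holonomy of the $F$-lamination is absolutely continuous, so the $\nu$-basin part occupies a fixed positive fraction of $D_n$. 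This yields $\mu_n=\nu$ for all large $n$, contradicting the assumption that the $\mu_n$ are pairwise distinct. Hence $\mu(A)=0$, that is, $\lambda_F^+\ge0$ for $\mu$-almost every point.
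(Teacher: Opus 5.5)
Your overall strategy is the natural one: a component $\nu$ of $\mu$ with $\lambda_F^+(\nu)<0$ is a hyperbolic SRB measure, and uniform $E$-disks of $\mu_n$ crossing its stable lamination force $\mu_n=\nu$. However, the step on which everything rests is not justified. Pliss gives you a set $G$ of centers of $E$-disks of uniform radius $r$ with $\mu_n(G)\ge\theta$ for a fixed $\theta>0$. You then assume that some such disk $D_n$ passes within $\delta\ll r_\ell$ of $z$, and nothing in your argument forces this. You only know $\liminf_n\mu_n(B(z,\delta))\ge\mu(B(z,\delta))$, which is small because $\delta$ must be small compared with $r$ and $r_\ell$. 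Moreover, the visits of a $\mu_n$-orbit to $B(z,\delta)$ need not occur at its hyperbolic times. Any weak-* limit $\beta$ of $\mu_n|_G$ satisfies $\beta\le\mu$ and $\beta(\mathbf{M})\ge\theta$. But if $\mu(\{\lambda_F^+\ge0\})\ge\theta$, then $\beta$ may live entirely on that set, away from every Pesin block of $\nu$. Since $r$ depends on the hyperbolic-time constants, you also cannot fix $z,\delta$ first and improve $\theta$ afterwards. Separately, one-step Pliss estimates for $\log m(Df|_E)$ fail in general when $\dim E>1$, so you need $L$-step blocks.

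The missing idea is to make the good set have $\mu_n$-measure arbitrarily close to one, uniformly in large $n$, and to order the choices accordingly.

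First, take $\nu$ to be an atom of the ergodic decomposition, $\mu=w\nu+(1-w)\mu'$ with $w>0$. This is possible because ergodic hyperbolic SRB measures have pairwise disjoint basins of positive volume, so there are only countably many. Fix a compact Pesin block $K\subset\Basin(\nu)$ of $\nu$, so that $\mu|_K=w\,\nu|_K$, and fix $\epsilon<w\nu(K)/2$.

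Next, since $\lambda_E^->0$ $\mu$-almost everywhere, choose $c,L$ with $\frac{1}{c}\int\bigl(2c-\frac{1}{L}A_L^E\bigr)_+\,d\mu<\epsilon$. The integrand is continuous, so the same bound holds for $\mu_n$ when $n$ is large. Then Lemma~\ref{lem:one-sided-discrete-maximal}, Birkhoff's theorem for the ergodic $\mu_n$, and Lemma~\ref{lem:robust-suffix-conorm} show the following: the closed set $G$ of points $y\in\Lambda$ with $A_r^E(f^{-r}y)\ge\frac{c}{2}r$ for all $r\ge N$ satisfies $\mu_n(G)\ge1-\epsilon$. This is exactly the argument in the proof of Proposition~\ref{lem:sliding-suffix-saturation}. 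Only at this point is the disk radius fixed.

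Now let $\beta$ be a weak-* limit of $\mu_n|_G$. It satisfies $\beta\le\mu$ and $\beta(\mathbf{M})\ge1-\epsilon$, hence $\beta(K)\ge\mu(K)-\epsilon>0$. Choose $z\in\supp(\beta|_K)\subset\supp(\nu|_K)$, and only then choose $\delta$ small relative to both radii. Since $B(z,\delta)$ is open, $\liminf_n\mu_n(G\cap B(z,\delta))\ge\beta(B(z,\delta))>0$, which produces your disks $D_n$ along a subsequence. This is the same device as the measures $\beta_j\le\nu_j$ and the neighborhood $V$ in the proof of Theorem~\ref{thm:lvl-basin-production}.

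With this in place, your concluding step goes through. Lebesgue-almost every point of $D_n$ lies in $\Basin(\mu_n)$, while the stable holonomy of $K$ puts a set of positive volume of $D_n$ in $\Basin(\nu)$. This yields $\mu_n=\nu$ for infinitely many $n$, a contradiction.
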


We also recall the nesting property for hierarchical Gibbs states \cite[Lemma~3.1]{MiCao2026}.

\begin{lemma}\label{lem:nested-gibbs-states}
  Let $T_\Lambda \mathbf{M} = G_0 \oplus_{\succ} G_1 \oplus_{\succ} \cdots \oplus_{\succ} G_r \oplus_{\succ} F$ be a dominated splitting. Set $E_j := \bigoplus_{i=0}^j G_i$ for each $0 \le j \le r$. Then for any $0 \le i < j \le r$, every Gibbs $E_j$-state is also a Gibbs $E_i$-state.
\end{lemma}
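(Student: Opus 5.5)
We need to prove Lemma \ref{lem:nested-gibbs-states}: every Gibbs $E_j$-state is a Gibbs $E_i$-state for $i<j$. The plan is to check the two conditions in the definition of a Gibbs $E_i$-state separately, using the splitting $T_\Lambda\mathbf{M}=E_i\oplus_{\succ}(G_{i+1}\oplus\cdots\oplus G_r\oplus F)$, which is dominated because the original multi-bundle splitting is.

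\textbf{Exponent condition.} Let $\mu$ be a Gibbs $E_j$-state. Since $E_i\subset E_j$ is an invariant subbundle, every vector $v\in E_i(x)$ satisfies $\|Df^n_xv\|\ge m(Df^n_x|_{E_j(x)})\|v\|$. Hence $\lambda^-_{E_i}(x)\ge\lambda^-_{E_j}(x)>0$ for $\mu$-almost every $x$. So for $\mu$-a.e. $x$ the Pesin unstable manifold $W^{E_i}_{\mathrm{loc}}(x)$ exists. It is the set of points whose backward orbits approach that of $x$ at exponential rate faster than a threshold lying in the domination gap between $E_i$ and the complementary bundle.

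\textbf{Inclusion of leaves.} The leaves $W^{E_i}_{\mathrm{loc}}(x)$ are contained in the leaves $W^{E_j}_{\mathrm{loc}}(x)$. Indeed, backward contraction along $W^{E_i}$ at rate below $-\lambda^-_{E_i}$ is also below $-\lambda^-_{E_j}$, up to the Pesin $\varepsilon$. So points of $W^{E_i}_{\mathrm{loc}}(x)$ satisfy the characterization of $W^{E_j}_{\mathrm{loc}}(x)$ as points with backward exponential convergence. Alternatively, one can use the graph-transform picture inside $W^{E_j}$: restricted to $W^{E_j}_{\mathrm{loc}}(x)$, the map $f^{-1}$ has the dominated splitting $E_i\oplus(G_{i+1}\oplus\cdots\oplus G_j)$ along the orbit. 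Pesin theory inside this manifold produces a strong unstable sub-foliation tangent to $E_i$, and by uniqueness it coincides with $W^{E_i}_{\mathrm{loc}}$. Consequently the measurable partition subordinate to $W^{E_i}$ can be chosen as a refinement of one subordinate to $W^{E_j}$.

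\textbf{Absolute continuity.} On each $E_j$-leaf, the conditional measure of $\mu$ is absolutely continuous with respect to leaf volume. The conditionals of $\mu$ on the finer $E_i$-pieces are obtained by disintegrating these $E_j$-conditionals along the $E_i$-subfoliation of each $E_j$-leaf. The key step is that this subfoliation, being a Pesin strong unstable lamination inside a $C^{1+\alpha}$ leaf with domination, is absolutely continuous, with Hölder Jacobians via the usual Pesin absolute continuity of holonomies. A measure absolutely continuous on the $E_j$-leaf therefore disintegrates into conditionals on the $E_i$-plaques that are absolutely continuous with respect to $\Leb_{W^{E_i}}$. This is a Fubini-type argument, and it gives condition (2).

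\textbf{Main obstacle.} The hard step is the absolute continuity of the $E_i$-lamination inside the $E_j$-leaves in the nonuniform setting. The plan is to work on Pesin blocks, where the local leaves vary continuously and domination gives uniform cone estimates. There one applies the standard absolute continuity theorem for Pesin laminations (Pesin; Barreira–Pesin), using that $f$ is $C^{1+\alpha}$. Then one exhausts by blocks of measure tending to one.
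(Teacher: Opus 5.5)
The paper does not prove this lemma; it quotes it from \cite[Lemma~3.1]{MiCao2026}. Your outline therefore has to stand on its own.

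Your first two steps are fine. The bound $m(Df^n_x|_{E_i(x)})\ge m(Df^n_x|_{E_j(x)})$ gives $\lambda^-_{E_i}\ge\lambda^-_{E_j}>0$. The backward-rate characterization gives $W^{E_i}_{\mathrm{loc}}(x)\subset W^{E_j}(x)$ after shrinking sizes. Hence a partition subordinate to $W^{E_i}$ can be taken finer than one subordinate to $W^{E_j}$, and the $E_i$-conditionals of $\mu$ are disintegrations of its $E_j$-conditionals. So the whole content of the lemma lies in your third step. There the decisive claim is asserted rather than proved, and the theorem you cite does not supply it.

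The Pesin/Barreira--Pesin absolute continuity theorem concerns laminations relative to the volume of $\mathbf{M}$, with holonomies between transversals of complementary dimension in $\mathbf{M}$. You need a leafwise statement instead. The $W^{E_i}$-plaques through a Pesin block must have absolutely continuous holonomies, with respect to leaf volume, between $(\dim E_j-\dim E_i)$-dimensional transversals inside a $\mu$-typical leaf $W^{E_j}_{\mathrm{loc}}(x)$. The ambient theorem does not yield this by Fubini. It is vacuous for families of plaques whose union has zero volume, which is the typical situation here (e.g.\ when $\Lambda$ has zero volume).

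So the holonomy estimate must be redone for the non-autonomous sequence of leaf maps $f^{-1}\colon W^{E_j}_{\mathrm{loc}}(f^nx)\to W^{E_j}_{\mathrm{loc}}(f^{n-1}x)$. In this sequence $E_i$ is the strongly contracted direction, and the in-leaf transverse directions are also contracted, only more slowly. The Jacobian comparison must then come from the domination gap alone, as in Brin--Pesin, with tempered constants on Pesin blocks and H\"older control of the leaves and of $\log\Jac(Df|_{TW^{E_i}})$. This is plausible, but it is the missing argument, not a citation.

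You can bypass leafwise holonomies with an entropy route. Let $\xi^{E}$ denote an increasing partition subordinate to $W^E$. After passing to ergodic components, $W^{E_i}\subset W^{E_j}$ are Ledrappier--Young intermediate unstable manifolds, and their theory gives two bounds:
$h_\mu(f,\xi^{E_i})\le\int\log\Jac(Df|_{E_i})\,d\mu$ and
$h_\mu(f,\xi^{E_j})-h_\mu(f,\xi^{E_i})\le\int\log\Jac(Df|_{G_{i+1}\oplus\cdots\oplus G_j})\,d\mu$.
The Gibbs $E_j$ property gives equality for the sum, which forces equality in the first bound. The Ledrappier--Young equality criterion, whose proof carries over to intermediate unstable laminations, then gives absolute continuity along $W^{E_i}$. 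This trades the leafwise holonomy estimate for the Ledrappier--Young entropy/dimension theory.
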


\section{Deduction of Theorem~\ref{thm:A} from Theorem~\ref{thm:B}}
\label{sec:proof-A}

In this section we deduce Theorem~\ref{thm:A} from
Theorem~\ref{thm:B}, whose proof is deferred to
Section~\ref{sec:marked-basin}. We first cover a full Lebesgue measure subset of the topological basin
by finitely many sets, according to the Lyapunov-sign patterns of
empirical limits. Applying Theorem~\ref{thm:B} to these subsets yields basin covering by ergodic hyperbolic SRB measures. We then prove that only
finitely many such measures occur.

Let $\Lambda$ be an attracting set with partially hyperbolic splitting
$$
T_\Lambda \mathbf{M}
=
E^u\oplus_{\succ} C_1\oplus_{\succ}\cdots\oplus_{\succ} C_k\oplus_{\succ} E^s.
$$
For \(0\le i\le k\), set
$$
 E_i
 =
 E^u\oplus_{\succ} C_1\oplus_{\succ}\cdots\oplus_{\succ} C_i,
 \qquad
 F_i
 =
 C_{i+1}\oplus_{\succ}\cdots\oplus_{\succ} C_k\oplus_{\succ} E^s,
$$
where \(E_0=E^u\) and \(F_k=E^s\). By the domination of the original
splitting, each \(E_i\oplus F_i\) is a dominated splitting.

\subsection{Empirical index selection and basin covering}

\begin{proposition}
\label{prop:basin-covering-flag}
Under the assumptions of Theorem~\ref{thm:A}, Lebesgue almost every
point of \(B(\Lambda)\) belongs to the basin of an ergodic hyperbolic SRB
measure supported on \(\Lambda\).
\end{proposition}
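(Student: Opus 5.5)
The plan is to apply Theorem~\ref{thm:B} to one of the splittings $E_i\oplus F_i$, with the index $i$ chosen according to the point. By Proposition~\ref{prop:gibbs-u-basic}(iii), we restrict to $x\in\mathcal R_u$, so that every $\mu\in\pw(x)$ is a Gibbs $u$-state. For such $x$ we fix one $\mu\in\pw(x)$. By Proposition~\ref{prop:gibbs-u-basic}(i) and the $u$-definite hypothesis, for $\mu$-a.e.\ $y$ the ergodic component $\mu_y$ is a Gibbs $u$-state, and the exponents of $\mu_y$ along each $C_j$ all have one strict sign. This assigns to $y$ a sign vector $\sigma(y)\in\{+,-\}^k$. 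Domination forces the exponents to decrease from $C_j$ to $C_{j+1}$. Hence $\sigma(y)$ has the form $(+,\dots,+,-,\dots,-)$, determined by an index $\iota(y)\in\{0,\dots,k\}$, the number of positive bundles. In terms of $\iota(y)$, we have $\lambda^-_{E_{\iota(y)}}(y)>0>\lambda^+_{F_{\iota(y)}}(y)$. The function $\iota$ is measurable and invariant.

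Next we choose the index $i=i(\mu)$ as the minimal value of $\iota$ achieved on a set of positive $\mu$-measure, i.e.\ $i=\min\{j:\mu\{\iota=j\}>0\}$. Then $\iota\ge i$ holds $\mu$-a.e., so $\lambda_{E_i}^-(y)\ge\lambda^-_{E_{\iota(y)}}(y)>0$ for $\mu$-a.e.\ $y$, since $E_i\subset E_{\iota(y)}$. Here we use that the minimal exponent along $E_i$ dominates everything in $F_i$, and that the $C_j$ with $j\le\iota(y)$ are positive. On the set $\{\iota=i\}$, which has positive measure, we get $\lambda_{F_i}^+<0$. This is precisely the asymmetric hypothesis of Theorem~\ref{thm:B} for the splitting $E_i\oplus F_i$.

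Set $\Gamma_i=\{x\in\mathcal R_u:\ \exists\mu\in\pw(x)\text{ with }\lambda^-_{E_i}>0\ \mu\text{-a.e. and }\mu\{\lambda^+_{F_i}<0\}>0\}$. We have just shown $\mathcal R_u=\bigcup_{i=0}^k\Gamma_i$. Applying Theorem~\ref{thm:B} to each $\Gamma_i$ then gives the statement, since the resulting $\nu$ is an ergodic hyperbolic SRB measure on $\Lambda$ with $x\in\Basin(\nu)$. Hyperbolicity of $\nu$ follows from $\lambda^-_{E_i}(\nu)>0>\lambda^+_{F_i}(\nu)$. The SRB property holds because $\nu$ is a Gibbs $E_i$-state, and $E_i$ is exactly its unstable bundle.

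The main obstacle is measurability of $\Gamma_i$, which Theorem~\ref{thm:B} requires. I would handle it by writing $\Gamma_i$ as a projection of a Borel subset of $\mathbf M\times\mathcal P(\mathbf M)$. The conditions ``$\mu\in\pw(x)$'', ``$\int\mathbf 1_{\{\lambda^-_{E_i}\le0\}}d\mu=0$'' and ``$\mu\{\lambda^+_{F_i}<0\}>0$'' are Borel in $(x,\mu)$, because they are built from Borel functions on $\mathbf M$. The projection is then analytic, hence Lebesgue measurable, which suffices. Alternatively, one can replace $\Gamma_i$ by a Borel subset of full Lebesgue measure in it.
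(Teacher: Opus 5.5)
Your proposal is correct and follows the paper's proof essentially step for step. You define the same sets $\Gamma_i$, choose the same minimal index $i$ with positive weight in the ergodic decomposition to show $\mathcal R_u=\bigcup_{i=0}^k\Gamma_i$, and then apply Theorem~\ref{thm:B} to each splitting $E_i\oplus F_i$. Your analytic-set argument for the measurability of $\Gamma_i$ is a sound addition, since the paper leaves that point implicit.
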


\begin{proof}
Let \(\mathcal R_u\subset B(\Lambda)\) be the full Lebesgue measure set provided by Proposition~\ref{prop:gibbs-u-basic}(iii), so that $\pw(x)\subset\mathcal G^u(f)$ for every $x\in\mathcal R_u$.
For
\(0\le i\le k\), define
\[
 \Gamma_i
 :=
 \left\{
 x\in\mathcal R_u:
 \begin{array}{l}
 \text{there exists \(\mu\in\pw(x)\) such that}\\[-1mm]
 \lambda_{E_i}^-(y)>0\ \text{for \(\mu\)-a.e. }y,
 \quad
 \mu\{y:\lambda_{F_i}^+(y)<0\}>0
 \end{array}
 \right\}.
\]
We claim that
\begin{equation}\label{eq:empirical-index-cover}
 \mathcal R_u=\bigcup_{i=0}^k\Gamma_i.
\end{equation}
Since every \(\Gamma_i\) is contained in \(\mathcal R_u\), it suffices
to prove the reverse inclusion. Fix \(x\in\mathcal R_u\), choose
\(\mu\in\pw(x)\), and write its ergodic decomposition as
\[
 \mu=\int\nu\,d\tau_\mu(\nu).
\]
The measure \(\mu\) is a Gibbs \(u\)-state, so
\(\tau_\mu\)-almost every \(\nu\) is an ergodic Gibbs \(u\)-state.
By domination and \(u\)-definiteness, for almost every such \(\nu\)
there is a unique index
\(
 \iota(\nu)\in\{0,\ldots,k\}
\)
such that
\[
 \lambda_{E_{\iota(\nu)}}^-(\nu)>0>
 \lambda_{F_{\iota(\nu)}}^+(\nu).
\]
Indeed, the center blocks with positive exponents form an initial
segment of \(C_1,\ldots,C_k\), while the remaining blocks have
negative exponents.

Let \(i\) be the smallest index such that
\[
 \tau_\mu\{\nu:\iota(\nu)=i\}>0.
\]
Then \(\iota(\nu)\ge i\) for \(\tau_\mu\)-almost every \(\nu\).
Consequently,
\[
 \lambda_{E_i}^-(y)>0
 \qquad\text{for \(\mu\)-almost every }y,
\]
whereas
\[
 \mu\{y:\lambda_{F_i}^+(y)<0\}
 \ge
 \tau_\mu\{\nu:\iota(\nu)=i\}
 >0.
\]
Thus \(x\in\Gamma_i\), proving
\eqref{eq:empirical-index-cover}.

For every \(0\le i\le k\), apply Theorem~\ref{thm:B} to
\(\Gamma_i\) and the dominated splitting \(E_i\oplus F_i\). It follows
that Lebesgue almost every point of \(\Gamma_i\) belongs to the basin
of an ergodic hyperbolic SRB measure supported on \(\Lambda\).
Since there are only finitely many indices \(i\),
\eqref{eq:empirical-index-cover} and the full-measure property of
\(\mathcal R_u\) give the conclusion.
\end{proof}

\subsection{Finiteness and completion of the proof}

Proposition~\ref{prop:basin-covering-flag} reduces
Theorem~\ref{thm:A} to the following finiteness statement.

\begin{proposition}\label{prop:finiteness}
Under the assumptions of Theorem~\ref{thm:A}, there are only finitely many
ergodic hyperbolic SRB measures supported on \(\Lambda\).
\end{proposition}

The proof relies on the following accumulation property of Gibbs
\(E_i\)-states.

\begin{lemma}
\label{lem:prefix-gibbs-accumulation}
Under the assumptions of Theorem~\ref{thm:A}, fix \(0\le i\le k\).  Let \(\{\mu_n\}_{n\ge1}\) be a sequence of
pairwise distinct ergodic Gibbs \(E_i\)-states such that
\(
\mu_n\weakstar\mu.
\)
Then \(\mu\) is a Gibbs \(E_i\)-state.  Moreover, there exists
\(\chi>0\) such that
\[
\lambda_{E_i}^-(x)\ge\chi
\qquad\text{for \(\mu\)-a.e. }x.
\]
\end{lemma}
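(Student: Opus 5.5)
Since every Gibbs \(E_i\)-state is a Gibbs \(E_0\)-state by Lemma~\ref{lem:nested-gibbs-states}, and \(E_0=E^u\), each \(\mu_n\) is an ergodic Gibbs \(u\)-state. By Proposition~\ref{prop:gibbs-u-basic}(ii), the limit \(\mu\) is then a Gibbs \(u\)-state. Proposition~\ref{prop:gibbs-u-basic}(i) says that \(\tau_\mu\)-almost every ergodic component \(\nu\) of \(\mu\) is an ergodic Gibbs \(u\)-state. By \(u\)-definiteness each such \(\nu\) has a well-defined index \(\iota(\nu)\), as in the proof of Proposition~\ref{prop:basin-covering-flag}.

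Each \(\mu_n\) satisfies \(\lambda_{E_i}^-(\mu_n)>0\), so \(\iota(\mu_n)\ge i\). The central claim is that \(\iota(\nu)\ge i\) for \(\tau_\mu\)-a.e. \(\nu\), with a uniform gap. I would prove this by contradiction via semicontinuity. The approach is to consider the block \(C_j\) for \(j\le i\), and the quantity \(\lambda_{C_j}^-\). For an ergodic Gibbs \(u\)-state, \(u\)-definiteness forces either \(\lambda_{C_j}^->0\) or \(\lambda_{C_j}^+<0\).

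The key uniform statement I would establish first is the following. There exists \(\chi>0\) such that every ergodic Gibbs \(u\)-state \(\nu\) satisfies, for each \(j\), either \(\lambda_{C_j}^-(\nu)\ge\chi\) or \(\lambda_{C_j}^+(\nu)\le-\chi\). The proof is by contradiction. Take ergodic Gibbs \(u\)-states \(\nu_m\) with, say, \(0<\lambda_{C_j}^-(\nu_m)\to0\), and pass to a limit \(\nu_\infty\in\mathcal G^u(f)\) (compactness). By lower semicontinuity of \(\lambda^-\) we get \(\lambda_{C_j}^-(\nu_\infty)\le 0\).

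\begin{itemize}
\item On the other hand, consider the ergodic components of \(\nu_\infty\). Those with \(\lambda_{C_j}^->0\) do not immediately contradict anything. The real obstruction is the components with \(\lambda_{C_j}^+<0\), whose mass must be ruled out.
\item To rule them out, I would use the subadditive characterization \(\lambda_{C_j}^+(\nu)=\inf_N\frac1N\int\log\|Df^N|_{C_j}\|d\nu\). If \(\nu_\infty\) gave positive weight to negative-\(C_j\) components, a suitable convex combination argument on the ergodic decomposition would have to be made to produce a contradiction. However, this does not yield a contradiction directly, because limits of ergodic measures need not be ergodic.
\end{itemize}

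So instead I would argue directly with the conclusion we need. The approach is to show that \(\lambda_{E_i}^-(x)\ge\chi\) \(\mu\)-a.e., where \(\chi:=\inf\{\lambda_{E_i}^-(\nu):\nu\text{ ergodic Gibbs }u\text{-state},\ \iota(\nu)\ge i\}\), and to prove \(\chi>0\) by the above contradiction scheme applied to the function \(\nu\mapsto\lambda_{E_i}^-(\nu)\) restricted to ergodic Gibbs \(u\)-states. Concretely, the extremal sets of \(\mathcal G^u(f)\) are handled via the continuity of \(\lambda^-\) along measures with fixed sign patterns.

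With \(\chi>0\) in hand, the remaining steps are as follows. First show \(\iota(\nu)\ge i\) for \(\tau_\mu\)-a.e. \(\nu\). If components with \(\iota(\nu)<i\) had positive mass, the block \(C_{\iota(\nu)+1}\) would have exponents \(\le-\chi\) on them. Upper semicontinuity of \(\int\log\|Df^N|_{C}\|\) then contradicts the \(\mu_n\) having all exponents on \(C\) positive, at least after a uniform-in-\(n\) lower bound \(\lambda_C^-(\mu_n)\ge\chi\). Given this, \(\lambda_{E_i}^-\ge\chi\) \(\mu\)-a.e. Proposition~\ref{prop:gibbs-E-basic}(ii) then shows that \(\mu\) is a Gibbs \(E_i\)-state.

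The main obstacle is the uniform gap \(\chi>0\) away from zero for \(u\)-definite blocks. I would try to obtain it from compactness of \(\mathcal G^u(f)\) combined with the fact that a weak-* limit of ergodic Gibbs \(u\)-states whose \(C_j\)-exponents tend to zero has an ergodic component with a zero exponent on \(C_j\). This last fact would be proved using the entropy formula \(h_\nu=\int\log\Jac(Df|_{E^u})d\nu\) characterizing Gibbs \(u\)-states, together with an averaging argument. Such a component contradicts \(u\)-definiteness.
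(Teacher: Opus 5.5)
There is a genuine gap, and it begins with your first move. When you replace the $\mu_n$ by mere Gibbs $u$-states, you discard their absolute continuity along $E_i$-unstable manifolds, and that is the information that controls the limit. What you keep, namely compactness of $\mathcal G^u(f)$ and semicontinuity of $\lambda^\pm$, only controls \emph{integrated} exponents. It cannot exclude that $\mu$ gives positive mass to ergodic components on which some $C_j$ with $j\le i$ is contracting, compensated by components with large positive exponents.

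\textbf{The exclusion step fails even with a uniform gap.} If components of mass $w>0$ have $\lambda_C^+\le-\chi$, upper semicontinuity only yields $\int\lambda_C^+\,d\mu\ge\limsup_n\lambda_C^+(\mu_n)\ge\chi$. This is compatible with the remaining components having large exponents. Lower semicontinuity of $\lambda_C^-$ gives an upper bound, which is useless here.

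\textbf{The uniform gap is never proved.} You correctly note that the contradiction scheme yields nothing, and then invoke it again to get $\chi>0$. There is no ``continuity of $\lambda^-$ along measures with fixed sign patterns'' available; $\lambda^-$ is only lower semicontinuous. Your fallback claim is that a limit of ergodic Gibbs $u$-states whose $C_j$-exponents tend to $0$ has an ergodic component with a zero $C_j$-exponent. This is unsupported: the $u$-entropy formula involves only $E^u$ and says nothing about center exponents. Nothing prevents such a limit from being a mixture of components with nonzero exponents of both signs.

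The missing ingredient is Proposition~\ref{thm:gibbs-E-accumulation}. If pairwise distinct ergodic Gibbs $E$-states converge to a Gibbs $E$-state, then $\lambda_F^+\ge0$ \emph{almost everywhere}. This is a pointwise conclusion that no semicontinuity argument gives. To apply it, you must know the limit is a Gibbs $E_{i-1}$-state, which is why the paper inducts on $i$:
\begin{enumerate}
\item By Lemma~\ref{lem:nested-gibbs-states}, the $\mu_n$ are Gibbs $E_{i-1}$-states, so the induction hypothesis makes $\mu$ a Gibbs $E_{i-1}$-state.
\item Proposition~\ref{thm:gibbs-E-accumulation} for $E_{i-1}\oplus F_{i-1}$ gives $\lambda^+_{F_{i-1}}\ge0$ $\mu$-a.e. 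Since $C_i$ dominates $F_i$, this means $\lambda^+_{C_i}\ge0$ a.e., and $u$-definiteness upgrades it to $\lambda^-_{C_i}>0$ $\mu$-a.e.
\item For uniformity, apply the same proposition to ergodic components $\nu_m$ of $\mu$ with $\lambda^-_{C_i}(\nu_m)\to0$. These are Gibbs $E_{i-1}$-states by Proposition~\ref{prop:gibbs-E-basic}(i). Their limit has $\lambda^-_{C_i}>0$ a.e., which contradicts lower semicontinuity.
\item Only then does Proposition~\ref{prop:gibbs-E-basic}(ii) show that $\mu$ is a Gibbs $E_i$-state.
\end{enumerate}
Note that the gap needed is only over ergodic Gibbs $E_{i-1}$-states, where the accumulation principle applies. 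It is not needed over all ergodic Gibbs $u$-states, as in your key claim; for $j\ge2$ you give no means of proving that stronger claim.
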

\begin{proof}
We argue by induction on \(i\).  The case \(i=0\) follows from the
weak-* compactness of Gibbs \(u\)-states and the uniform expansion of
\(E_0=E^u\).

Assume the result for \(i-1\). By
Lemma~\ref{lem:nested-gibbs-states}, the measures \(\mu_n\) are
ergodic Gibbs \(E_{i-1}\)-states.  Hence the induction hypothesis
implies that \(\mu\) is a Gibbs \(E_{i-1}\)-state.
Applying Proposition~\ref{thm:gibbs-E-accumulation} to
\(E_{i-1}\oplus F_{i-1}\), we obtain
\[
\lambda_{F_{i-1}}^+(x)\ge0
\qquad\text{for \(\mu\)-a.e. }x.
\]
Since \(F_{i-1}=C_i\oplus F_i\), domination and \(u\)-definiteness give
\[
\lambda_{C_i}^-(x)>0
\qquad\text{for \(\mu\)-a.e. }x.
\]

We claim that this positivity is uniform.  Otherwise there exist
pairwise distinct ergodic components \(\nu_m\) of \(\mu\) such that
\[
\lambda_{C_i}^-(\nu_m)\longrightarrow0.
\]
Up to considering a subsequence, we assume \(\nu_m\weakstar\nu\).
Since \(\mu\) is a Gibbs \(E_{i-1}\)-state, it follows from
Proposition~\ref{prop:gibbs-E-basic}(i) that
\(\nu_m\) are ergodic Gibbs \(E_{i-1}\)-states.
The induction hypothesis implies that \(\nu\) is a Gibbs
\(E_{i-1}\)-state.  Applying
Proposition~\ref{thm:gibbs-E-accumulation} to \(\{\nu_m\}\), and arguing
again with \(u\)-definiteness, yields
\(
\lambda_{C_i}^-(x)>0
\)
for $\nu$-a.e. $x$.
Hence
\begin{equation}\label{cip}
\lambda_{C_i}^-(\nu)
=
\int\lambda_{C_i}^-(x)\,d\nu(x)
>0.
\end{equation}
On the other hand, Kingman's formula gives
\[
\lambda_{C_i}^-(\eta)=\sup_{n\ge 1}\frac{1}{n}\int \log m(Df_x^n|_{C_i(x)})d\eta(x)
\]
for every $f$-invariant measure $\eta$; see \cite{Kingman}.  Thus
$\eta\mapsto \lambda_{C_i}^-(\eta)$ is lower semicontinuous, and
\[
\lambda_{C_i}^-(\nu)
\le
\liminf_{m\to\infty}\lambda_{C_i}^-(\nu_m)
=0,
\]
which is a contradiction to \eqref{cip}.
Thus the Lyapunov exponents of \(\mu\) along \(C_i\) are uniformly
positive.  By domination, the same is true along
$
E_i=E_{i-1}\oplus C_i.
$
Hence, there exists \(\chi>0\) such that
\[
\lambda_{E_i}^-(x)\ge\chi
\qquad\text{for \(\mu\)-a.e. }x.
\]
Proposition~\ref{prop:gibbs-E-basic}(ii) therefore implies that
\(\mu\) is a Gibbs \(E_i\)-state. The induction is
complete.
\end{proof}

\begin{proof}[Proof of Proposition~\ref{prop:finiteness}]
For \(0\le i\le k\), let \(\mathscr P_i\) be the family of ergodic
hyperbolic SRB measures \(\mu\), supported on \(\Lambda\), such that
$$
\lambda_{E_i}^-(\mu)>0>\lambda_{F_i}^+(\mu).
$$
Every measure in \(\mathscr P_i\) is an ergodic Gibbs \(E_i\)-state.
Conversely, \(u\)-definiteness and domination place every ergodic
hyperbolic SRB measure in exactly one \(\mathscr P_i\).  It remains to
prove that these families are finite.

Suppose first that \(i<k\) and that \(\mathscr P_i\) is infinite.
Choose pairwise distinct
\[
\mu_n\in\mathscr P_i,
\qquad
\mu_n\weakstar\mu.
\]
By Lemma~\ref{lem:prefix-gibbs-accumulation}, \(\mu\) is a Gibbs
\(E_i\)-state.  Applying Proposition~\ref{thm:gibbs-E-accumulation} to
\(E_i\oplus F_i\), and then using that \(C_{i+1}\) is the strongest
subbundle in \(F_i\) together with \(u\)-definiteness, gives
\[
\lambda_{C_{i+1}}^-(x)>0
\qquad\text{for \(\mu\)-a.e. }x.
\]
Consequently,
\begin{equation}\label{eq:Ci1-positive-measure}
\lambda_{C_{i+1}}^-(\mu)>0.
\end{equation}
For every \(n\), however,
\(\lambda_{C_{i+1}}^-(\mu_n)<0\).
By the lower semicontinuity of the minimal exponent functional,
\[
\lambda_{C_{i+1}}^-(\mu)
\le
\liminf_{n\to\infty}\lambda_{C_{i+1}}^-(\mu_n)
\le0,
\]
contradicting \eqref{eq:Ci1-positive-measure}.  Thus
\(\mathscr P_i\) is finite for \(i<k\).

Finally, suppose that \(\mathscr P_k\) is infinite.  Taking a weak-*
limit of pairwise distinct measures in \(\mathscr P_k\),
Lemma~\ref{lem:prefix-gibbs-accumulation} gives a Gibbs \(E_k\)-state
\(\mu\).  Proposition~\ref{thm:gibbs-E-accumulation}, applied to
\(E_k\oplus E^s\), produces a nonnegative Lyapunov exponent along
\(E^s\), contradicting its uniform contraction.  Thus
\(\mathscr P_k\) is finite as well.
\end{proof}

We now complete the proof of Theorem~\ref{thm:A} by applying Propositions~\ref{prop:basin-covering-flag} and
\ref{prop:finiteness}.

\begin{proof}[Proof of Theorem~\ref{thm:A}]
By Proposition~\ref{prop:basin-covering-flag}, Lebesgue almost every
point of \(B(\Lambda)\) belongs to the basin of an ergodic hyperbolic SRB
measure supported on \(\Lambda\). Proposition~\ref{prop:finiteness}
shows that there are only finitely many such measures, which completes the proof of Theorem~\ref{thm:A}.
\end{proof}

\section{Temporal synchronization}\label{sec:temporal}

A key first step in the proof of Theorem~\ref{thm:B} is to establish
a temporal synchronization mechanism. Roughly speaking, if an
empirical subsequence converges to a measure satisfying the required
Lyapunov-sign conditions, then along the same subsequence there is a
positive density of times at which expansion along \(E\) and
contraction along \(F\) occur simultaneously.

We first introduce sliding suffix times and their expansion estimates
along \(E\). We then define the contraction carriers along \(F\) and
prove the synchronization theorem.

Throughout this section and the next two sections, let $f\in \Diff^{1}(\mathbf{M})$ and $\Lambda$ be an attracting set with a dominated splitting $T_{\Lambda}\mathbf{M}=E\oplus_{\succ} F.$
Fix a sufficiently small trapping region \(U\), with
\(f(\overline U)\subset U\), and a continuous extension
\(\widetilde E\oplus\widetilde F\) of the splitting to
\(\overline U\). These extensions need not be \(Df\)-invariant.
Fixing \(\theta>0\) sufficiently small, we consider the $E$-direction cone field $\cC_\theta^E=(\cC_{\theta}^E(x))_{x\in \overline{U}}$ of width $\theta$ by
$$
\cC_\theta^E(x)
=
\left\{
v=v^E+v^F:
v^E\in\widetilde E(x),\
v^F\in\widetilde F(x),
\|v^F\|\le\theta\|v^E\|
\right\}.
$$
By domination, after choosing an adapted metric and shrinking \(U\),
there exists \(0<\tau_0<1\) such that
\begin{equation}\label{invc}
 Df_x(\cC_\theta^E(x))
 \subset\cC_{\tau_0\theta}^E(fx)
 \qquad \text{for every } x\in\overline U.
\end{equation}

\subsection{Sliding suffix times and cone expansion}

To detect the local expansion behavior along the $E$-direction for points starting from the trapping region, we
define
$$
 A_n^E(x)
 :=
 \log
 \inf_{\substack{
 v\in\mathcal C_{\theta}^E(x)\\
 \|v\|=1}}
 \|Df_x^nv\|,\qquad x\in\overline U, ~n\ge 0.
$$
By construction, each \(A_n^E\) is continuous, and there exists \(C>0\) such that
\[
 |A_n^E(x)|\le Cn,
 \qquad x\in\overline U, ~n\ge 0.
\]
Forward invariance of the cone field gives the superadditivity of $\{A_n^E\}_{n\ge 0}$:
$$
 A_{n+m}^E(x)
 \ge
 A_n^E(x)+A_m^E(f^nx),
 \qquad n, ~m\ge 0.
$$

Now we can introduce the notion of sliding suffix times.

\begin{definition}\label{def:sliding-suffix}
Fix \(c>0\) and integers \(N\ge L\ge1\). For $x\in U$, an integer \(n\ge N\) is
called a \((c,L,N)\)-\emph{sliding suffix time} for \(E\) at \(x\) if
$$
 \frac{1}{r-L+1}
 \sum_{s=n-r}^{n-L}\frac1L A_L^E(f^s x)
 \ge c
 \qquad
 \text{for every }N\le r\le n.
$$
The set of all such times is denoted by
\(
 S_{c,L,N}^E(x).
\)
\end{definition}

By definition, $n \in S_{c,L,N}^E(x)$ means that every suffix ending at $n$ of length at least $N$ has average $L$-step $E$-expansion at least $c$, taken over all sliding length-$L$ windows contained in it (Figure~\ref{fig:sliding-suffix}); individual windows need not be expanding. The parameters $L$ and $N$ serve distinct purposes: the block length $L$ captures expansion governed by the minimal Lyapunov exponent along $E$—which one-step conorms may underestimate in higher dimensions—while $N$ absorbs boundary losses from incomplete blocks, yielding robust expansion on long suffixes (Lemma~\ref{lem:robust-suffix-conorm}). Ultimately, as will be shown below, this mechanism provides sliding suffix times of density arbitrarily close to one, enabling the synchronization of $E$-expansion with positive-density contraction along $F$.

\begin{figure}[htbp]
 \centering
 \includegraphics[width=.85\textwidth]{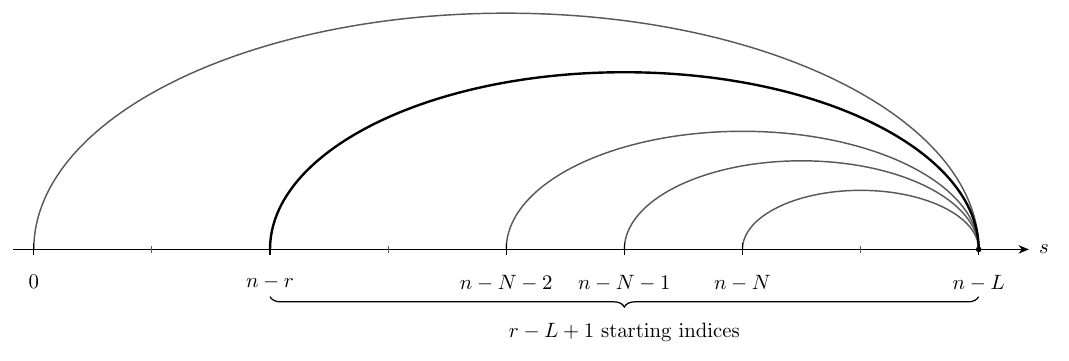}
 \caption{The arcs represent the intervals of window starting
indices \([n-r,n-L]_{\mathbb Z}\), with \(N\le r\le n\).}
 \label{fig:sliding-suffix}
\end{figure}

\begin{remark}
We use cone expansions because forward invariance of the cone
field ensures superadditivity on the trapping region \(U\),
where \(\widetilde E\) need not be \(Df\)-invariant.
When \(\Lambda=\mathbf{M}\), one may replace \(A_L^E(x)\) in
Definition~\ref{def:sliding-suffix} with
$\log m\bigl(Df_x^L|_{E(x)}\bigr)$.
Since the logarithmic conorms are superadditive and differ
from the cone expansions by a uniformly bounded amount
(Lemma~\ref{lem:cone-bundle-comparison}), all subsequent arguments carry through up to adjusting constants.
\end{remark}

The following comparison identifies the asymptotic cone expansion
on \(\Lambda\) with the minimal Lyapunov exponent along \(E\).

\begin{lemma}
\label{lem:cone-bundle-comparison}
There exists \(K_E\ge1\) such that, for every \(y\in\Lambda\) and
every \(n\ge1\),
$$
\log m\bigl(Df_y^n|_{E(y)}\bigr)-\log K_E
\le
A_n^E(y)
\le
\log m\bigl(Df_y^n|_{E(y)}\bigr).
$$
Consequently, for every invariant probability
measure $\mu$ supported on $\Lambda$,
\begin{equation}\label{eq:cone-exponent-identification}
\lim_{n\to\infty}\frac1nA_n^E(y)
=
\lambda_E^-(y),
\qquad\text{for \(\mu\)-a.e. }y.
\end{equation}
\end{lemma}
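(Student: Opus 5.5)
The upper bound is immediate, because the cone contains the bundle. For \(y\in\Lambda\) the extension satisfies \(\widetilde E(y)=E(y)\), so \(E(y)\subset\mathcal C^E_\theta(y)\). The infimum defining \(A_n^E(y)\) is therefore taken over a set containing the unit vectors of \(E(y)\), which gives \(A_n^E(y)\le\log m(Df_y^n|_{E(y)})\).

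For the lower bound I would write a unit vector \(v\in\mathcal C^E_\theta(y)\) as \(v=v^E+v^F\) with \(\|v^F\|\le\theta\|v^E\|\). Continuity of the splitting on the compact set \(\Lambda\) bounds the angle between \(E\) and \(F\) away from zero, so \(\|v^E\|\ge c_0>0\) uniformly in \(y\) and \(v\). By invariance of \(E\) and \(F\) on \(\Lambda\),
\[
Df^n_yv=Df^n_yv^E+Df^n_yv^F,
\]
with the first term in \(E(f^ny)\) and the second in \(F(f^ny)\). Invariance of the splitting and the uniform angle bound at \(f^ny\) then give \(\|Df^n_yv\|\ge c_1\|Df^n_yv^E\|\), and
\[
\|Df^n_yv^E\|\ge m(Df^n_y|_{E(y)})\,c_0 .
\]
Setting \(K_E=(c_0c_1)^{-1}\) yields the lower bound. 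The key point is that this step uses the projection onto \(E\) along \(F\) rather than domination. Domination is only needed for cone invariance, which the lemma does not require.

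The consequence follows by dividing by \(n\) and letting \(n\to\infty\). The additive error \(\log K_E\) disappears, and \(\frac1n\log m(Df^n_y|_{E(y)})\to\lambda_E^-(y)\) at every Oseledets regular point, that is, \(\mu\)-almost everywhere. This limit is exactly the definition of \(\lambda_E^-(y)\) used in the paper.

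The only step needing care is the uniformity of the projection constants \(c_0\) and \(c_1\). This reduces to compactness of \(\Lambda\) together with continuity and transversality of \(E\oplus F\). No step here looks like a real obstacle.
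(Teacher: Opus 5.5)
Your proof is correct. It differs from the paper's only in how the lower bound controls $\|Df^n_yv\|$ from below.

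The paper uses forward invariance of the cone field \eqref{invc}. Since $Df^n_yv\in\mathcal C^E_\theta(f^ny)$ and the splitting is invariant, one gets $\|Df^n_yv^F\|\le\theta\|Df^n_yv^E\|$, hence $\|Df^n_yv\|\ge(1-\theta)\|Df^n_yv^E\|$. Together with $\|v^E\|\ge(1+\theta)^{-1}$, this gives the explicit constant $K_E=(1+\theta)/(1-\theta)$, which depends only on the cone width.

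You instead bound the projection onto $E$ along $F$ at $f^ny$ uniformly over $\Lambda$. This uses only continuity and transversality of an invariant splitting on a compact set, not domination. The price is a constant that depends on the minimal angle between $E$ and $F$ rather than on $\theta$ alone.

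One small misattribution: the bound $\|v^E\|\ge c_0$ does not come from the angle bound. Bounded projections only give the upper estimate $\|v^E\|\le\|\pi^E\|\,\|v\|$. The lower bound comes directly from the cone condition and the triangle inequality: $1=\|v\|\le(1+\theta)\|v^E\|$, so $c_0=(1+\theta)^{-1}$. The angle bound is genuinely needed only for your constant $c_1$.

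Your deduction of \eqref{eq:cone-exponent-identification} is the same as the paper's.
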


\begin{proof}
The upper bound follows from \(E(y)\subset\cC_{\theta}^E(y)\).  For any \(v=v^E+v^F\in\cC_{\theta}^E(y)\) with $y\in \Lambda$, we have 
\[
(1-\theta)\|v^E\|\le\|v\|\le(1+\theta)\|v^E\|.
\]
If \(\|v\|=1\), invariance of the splitting and of the cone field gives
\[
\|Df_y^nv\|
\ge(1-\theta)\|Df_y^nv^E\|
\ge\frac{1-\theta}{1+\theta}
   m(Df_y^n|_{E(y)}).
\]
Taking the infimum over unit cone vectors proves the lower bound with
\(K_E=(1+\theta)/(1-\theta)\).  Dividing by \(n\) and letting
\(n\to\infty\) yields \eqref{eq:cone-exponent-identification} by Oseledets'
theorem.
\end{proof}

The next result shows that sliding suffix times yield a robust finite-time expansion estimate.
\begin{lemma}
\label{lem:robust-suffix-conorm}
Fix $0<\sigma<c, L\in \N$. There exist an integer \(N_0\ge L\) and \(\delta_{L}\in (0,1)\) such
that the following holds for every \(N\ge N_0\).
For every \(x\in U\),
\(b\in S_{c,L,N}^E(x)\) and \(N\le r\le b\), if \(z\in U\) satisfies
\[
 d(f^jz,f^{b-r+j}x)<\delta_{L}
 \qquad\text{for every }0\le j\le r,
\]
then
\begin{equation}\label{eq:robust-suffix-conorm}
 A_r^E(z)\ge\sigma r.
\end{equation}
In particular, for every \(x\in U\), \(N\ge N_0\), and
\(b\in S_{c,L,N}^E(x)\),
\begin{equation}\label{eq:robust-reference-cone-score}
 A_r^E(f^{b-r}x)\ge\sigma r \qquad\text{for every }N\le r\le b.
\end{equation}
\end{lemma}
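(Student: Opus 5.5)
Fix $\sigma<\sigma'<c$. The plan is to compare $A_r^E(z)$ with the sliding window sum for $x$ in three steps: use continuity of $A_L^E$ to pass from the orbit of $x$ to the orbit of $z$, use superadditivity to turn window scores into a lower bound for $A_r^E(z)$, and absorb boundary losses by taking $N$ large.

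\textbf{Step 1 (continuity).} $A_L^E$ is continuous on the compact set $\overline U$, hence uniformly continuous. Choose $\delta_L>0$ such that $d(p,q)<\delta_L$ implies
\[
|A_L^E(p)-A_L^E(q)|\le L(c-\sigma').
\]
For the windows we need, $0\le j\le r-L$, we have $d(f^jz,f^{b-r+j}x)<\delta_L$. Writing $s=b-r+j$, this gives
\[
\tfrac1L A_L^E(f^jz)\ge \tfrac1LA_L^E(f^{s}x)-(c-\sigma').
\]
Averaging over $j$ and using $b\in S^E_{c,L,N}(x)$ yields
\[
\frac1{r-L+1}\sum_{j=0}^{r-L}\frac1LA_L^E(f^jz)\ge \sigma'.
\]

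\textbf{Step 2 (from sliding windows to $A_r^E$).} For each residue $q\in\{0,\dots,L-1\}$, decompose $[0,r]$ as follows: an initial segment $[0,q]$, then consecutive length-$L$ blocks starting at $q,q+L,\dots$, as long as they fit in $[0,r]$, then a final remainder of length less than $L$. Superadditivity gives
\[
A_r^E(z)\ge A_q^E(z)+\sum_{i}A_L^E(f^{q+iL}z)+A^E_{\rho}(\cdot),
\]
and each boundary term is at least $-CL$ by $|A_n^E|\le Cn$. Summing over $q=0,\dots,L-1$, every window start $j\in[0,r-L]$ is counted exactly once. Therefore
\[
L\,A_r^E(z)\ge \sum_{j=0}^{r-L}A_L^E(f^jz)-2CL^2.
\]
Dividing by $L$ and using Step 1,
\[
A_r^E(z)\ge \sigma'(r-L+1)-2CL.
\]

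\textbf{Step 3 (choice of $N_0$).} Take $N_0\ge L$ large enough that $\sigma'(r-L+1)-2CL\ge\sigma r$ for all $r\ge N_0$. This is possible because the difference $(\sigma'-\sigma)r$ grows linearly in $r$, while the loss $\sigma'(L-1)+2CL$ depends only on $L$. For every $N\ge N_0$ and $r\ge N$ this gives \eqref{eq:robust-suffix-conorm}.

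The "in particular" statement \eqref{eq:robust-reference-cone-score} follows by taking $z=f^{b-r}x\in U$, for which the distances are zero.

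The main point needing care is the bookkeeping in Step 2. One must check that the residue-class decomposition covers every window start exactly once and that the boundary losses are uniform, bounded by a constant times $L^2$ independent of $r$. Superadditivity on all of $U$, which comes from cone invariance, is exactly what makes the decomposition legitimate for the point $z$.
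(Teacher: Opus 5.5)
Your proof is correct and follows essentially the same route as the paper: decompose $[0,r]$ by residue classes into length-$L$ blocks and apply superadditivity, use uniform continuity of $A_L^E$ to pass between the orbits of $x$ and $z$, and take $N_0$ large to absorb the boundary losses, which depend only on $L$. The only difference is cosmetic. The paper uses pigeonhole to select a single residue class carrying at least $c(r-L+1)$ of the window sum and applies continuity afterwards, whereas you first transfer the window average to the orbit of $z$ and then average over all $L$ classes; both give the same bound.
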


\begin{proof}
Let $R_L:=\max_{0\le h<L}\sup_{y\in\overline U}(-A_h^E(y))$ and fix \(\varepsilon\in (0, c-\sigma)\). Choose  \(N_0\in \N\) so that
\[
 N_0\ge\max\left\{L,\frac{c(L-1)+2R_L}{(c-\varepsilon-\sigma)}\right\}.
\]
By uniform continuity of
\(A_L^E\) on \(\overline U\), there is \(\delta_{L}\in (0,1)\) such that
$$
 |A_L^E(y)-A_L^E(z)|<\varepsilon L
 \quad\text{if }y,z\in\overline U,\ d(y,z)<\delta_{L}.
$$

Now we fix \(N\ge N_0\), \(x\in U\), \(b\in S_{c,L,N}^E(x)\), and
\(N\le r\le b\).
For \(0\le k<L\), put
\[
 q_k:=\left\lfloor\frac{r-k}{L}\right\rfloor,
 \qquad
 \mathscr R_k:=\{b-r+k+jL:0\le j<q_k\}.
\]
A class is empty when \(q_k=0\). A direct calculation shows that
\[
 [b-r,b-L]_{\mathbb Z}
 =\bigsqcup_{k=0}^{L-1}\mathscr R_k.
\]
Since $b\in S_{c,L,N}^E(x)$, one can choose \(k\in\{0,\ldots,L-1\}\) such that
\begin{equation}\label{eq:selected-residue-score}
 \sum_{s\in\mathscr R_k}A_L^E(f^sx)
=\sum_{j=0}^{q_k-1}A_L^E(f^{b-r+k+jL}x)
 \ge c(r-L+1).
\end{equation}
Set
$q:=q_k$ and $t:=r-k-qL$.
Then we have $q\ge 1$, $0\le k,t<L$ and $r=k+qL+t$.
Suppose that $z\in U$ and \(
 d(f^jz,f^{b-r+j}x)<\delta_{L}\) for every $0\le j \le r$.
Superadditivity of $\{A_n^E\}$, \eqref{eq:selected-residue-score} and the choice of \(\delta_L\) then yield
\[
 \begin{aligned}
 A_r^E(z)
 &\ge A_k^E(z)
      +\sum_{j=0}^{q-1}A_L^E(f^{k+jL}z)
      +A_t^E(f^{k+qL}z)\\
 &\ge \sum_{j=0}^{q-1}\left(A_L^E(f^{b-r+k+jL}x)
      -\varepsilon L \right)-2R_L\\
 &\ge c(r-L+1)-\varepsilon qL-2R_L\\
 &\ge(c-\varepsilon)r-c(L-1)-2R_L,
 \end{aligned}
\]
where the last inequality uses \(qL\le r\). Since \(r\ge N\ge N_0\), by the choice of $N_0$ we obtain from above that \(A_r^E(z)\ge\sigma r\), proving
\eqref{eq:robust-suffix-conorm}. As \(r\in[N,b]_{\mathbb Z}\) was arbitrary, taking
\(z=f^{b-r}x\) for each such \(r\) yields
\eqref{eq:robust-reference-cone-score}.
\end{proof}

\subsection{Synchronization along an empirical subsequence}

\paragraph{Mildly hyperbolic carriers.}

For integers \(\ell,M\ge1\) and \(\eta>0\), set
\[
 \psi_\ell^F(x)
 :=-\frac1\ell
 \log\bigl\|Df_x^\ell|_{\widetilde F(x)}\bigr\|,
 \qquad x\in\overline U.
\]
For \(x\in U\), let \(\mathcal I_x\) be the union of the intervals
\([\max\{0,p-M\},p)_{\mathbb Z}\) over all integers \(p\ge1\) satisfying
\[
 \sum_{j=p-k}^{p-1}\psi_\ell^F(f^jx)\ge\eta k
 \qquad \text{whenever } 1\le k\le\min\{M,p\}.
\]
The \emph{mildly hyperbolic carrier along \(F\)},
\(G_{\eta,\ell,M}^{F}(x)\)\footnote{In the notation of Burguet~\cite{BurguetCenter},
\(G_{\eta,\ell,M}^{F}(x)=G_a^{\eta,M}((1))\), where
\(a=(\psi_\ell^F(f^nx))_{n\ge0}\).}, consists of those integers \(t\ge0\)
for which there exists a connected component \(I\) of \(\mathcal I_x\) such that
\begin{itemize}
\item $t, t+1 \in I$,
\item for every $q\in I$ with $q\le t$,
$$
 \sum_{j=q}^{t}\psi_\ell^F(f^jx)
 \ge\frac\eta2(t+1-q).
$$
\end{itemize}

The following theorem synchronizes these carriers with sliding suffix
times along the same empirical subsequence.

\begin{theorem}
\label{thm:empirical-synchronization}
Let \(x\in U\) and \(n_j\to\infty\) satisfy  \(
 \mu_{n_j}^x\weakstar\mu
\). Suppose that
$$
 \lambda_E^-(y)>0
 \quad\text{for $\mu$-almost every }y,
 \qquad
 \mu\{y:\lambda_F^+(y)<0\}>0.
$$
Then there exist \(c,\eta,\rho>0\) and integers \(L,\ell,M_0\ge1\)
such that, for every \(N\ge L\) and \(M\ge M_0\),
$$
 \liminf_{j\to\infty}\frac1{n_j}
 \#\left(
 S_{c,L,N}^{E}(x)
 \cap G_{\eta,\ell,M}^{F}(x)
 \cap[0,n_j)
 \right)
 \ge\rho.
$$
Moreover, \(c,\eta,\rho\) may be chosen rational.
\end{theorem}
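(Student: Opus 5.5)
The plan is to prove two density statements along the subsequence \((n_j)\) and then combine them by inclusion--exclusion. The first says the density of \(S^E_{c,L,N}(x)\) in \([0,n_j)\) is at least \(1-\rho\). The second says the density of \(G^F_{\eta,\ell,M}(x)\) is at least \(2\rho\). Both should hold for suitable parameters, uniformly in \(N\ge L\) and \(M\ge M_0\). Intersecting then gives density at least \(\rho\) in the \(\liminf\). Rationality of \(c,\eta,\rho\) is free, since each constant can be shrunk.

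\emph{Step 1: the contraction carrier.} Set \(a:=\mu\{\lambda_F^+<0\}>0\) and choose \(\eta_0>0\) with \(\mu\{\lambda_F^+<-3\eta_0\}\ge a/2\). By Kingman's theorem and invariance of \(F\) on \(\Lambda\), \(\psi^F_\ell\to-\lambda_F^+\) in a suitable averaged sense. Choose \(\ell\) large and set \(\phi:=\psi^F_\ell\) restricted to \(\Lambda\). Then \(\mu\)-almost every \(y\) with \(\lambda_F^+(y)<-3\eta_0\) has Birkhoff averages of \(\phi\) at least \(2\eta_0\), possibly after replacing \(\phi\) by the \(\ell\)-step cocycle average.

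I then invoke the combinatorial lemma of Burguet's hyperbolic-times machinery for sequences \(a=(\phi(f^nx))_n\), which is exactly what the footnote's \(G_a^{\eta,M}\) refers to. That lemma shows that the upper density of \(G^{\eta,M}_a\) along \(n_j\) is bounded below by \(\mu\)-measure quantities for the continuous function \(\phi\) (continuous on \(\overline U\) via \(\widetilde F\)). Concretely, \(\mu_{n_j}^x\to\mu\) transfers the statement ``\(\mu\{\text{points whose forward averages of }\phi\text{ over all windows of length }\le M\text{ stay}\ge\eta\}\)'' to frequencies along the orbit of \(x\). This yields \(2\rho:=a/8\), say, for \(M\ge M_0\).

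\emph{Step 2: the sliding suffix times.} This step is the main obstacle. By Lemma~\ref{lem:cone-bundle-comparison}, \(\frac1L\int A^E_L\,d\mu\to\lambda^-_E(\mu)\), but that only gives positivity on average. What we need is that the windowed average \(\frac1{r-L+1}\sum_{s=n-r}^{n-L}\frac1L A^E_L(f^sx)\) is at least \(c\) for all suffixes \(r\ge N\) simultaneously, for all but a \(\rho\)-fraction of \(n<n_j\).

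First pick \(\kappa\) small with \(\mu\{\lambda_E^-<2\kappa\}\) tiny, which is possible since \(\lambda^-_E>0\) almost everywhere. Then pick \(L\) so that the set \(B:=\{y: A^E_L(y)/L<\kappa\}\) has \(\mu(B)\le\varepsilon\), with \(\varepsilon\ll\rho\). Next set \(g:=\kappa-\min(A^E_L/L,\kappa)\cdot\) (i.e.\ the truncated deficiency), a continuous function with \(\int g\,d\mu\le C\varepsilon\). Put \(c:=\kappa/2\). The times \(n\) that fail to be sliding suffix times are then exactly the right endpoints of intervals \([n-r,n-L]\) on which the average of \(h:=A^E_L/L\) along the orbit is below \(c\). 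Since \(h\ge-C\) is bounded, this forces the average of \(g\) (plus a bounded correction) on such an interval to exceed \(\kappa/2\).

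A backward maximal (rising-sun / Vitali) inequality then bounds the number of such endpoints in \([0,n_j)\) by \((2/\kappa)\sum_{s<n_j}\tilde g(f^sx)\), where \(\tilde g:=\kappa-h\) is positive only on the deficient set. By weak* convergence and continuity of \(\tilde g_+\), this sum is asymptotically \(n_j\int\tilde g_+\,d\mu\). The right-hand side is small because \(\tilde g_+\le(\kappa+C)\mathbf 1\) on an \(\varepsilon\)-set, up to the continuity approximation. The subtle point is to keep this bound uniform in \(N\). It is: larger \(N\) only removes constraints, so the bound for \(N=L\) suffices. Choosing \(\varepsilon\) so that this density is at most \(\rho\) completes Step 2 and, combined with Step 1, proves the theorem.
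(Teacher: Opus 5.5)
Your architecture matches the paper's: an $E$-density estimate and an $F$-density estimate along the same subsequence, combined by inclusion--exclusion, with the $E$-defect chosen after the $F$-constants. Your remark $S^E_{c,L,N}(x)\supset S^E_{c,L,L}(x)\cap[N,\infty)$ correctly gives uniformity in $N$. In Step~2, your $g=(\kappa-h)_+$ with $c=\kappa/2$ is the paper's $d_L=(2c-b_L)_+$, and bad times are counted by the one-sided maximal inequality (Lemma~\ref{lem:one-sided-discrete-maximal}).

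\textbf{Step 2: the last estimate does not close.} The maximal inequality bounds the bad density by $\frac2\kappa\int g\,d\mu$. Your bound $g\le(\kappa+C)\mathbf 1_B$ turns this into $\frac{2(\kappa+C)}{\kappa}\mu(B)$, so you need $\mu(B)\lesssim\rho\kappa/C$. Enlarging $L$ cannot push $\mu(B)$ below about $\mu\{\lambda_E^-<\kappa/2\}$, and nothing makes this $o(\kappa)$. For instance, if $\mu\{\lambda_E^-<t\}\ge t$ for small $t$ (exponents of ergodic components accumulating at $0$), your bound stays above $C$ as $L\to\infty$, whatever $\kappa$ is. The fix is to use $\lambda_E^->0$ once more:
$g\le(\kappa-\lambda_E^-)_++|h-\lambda_E^-|\le\kappa\mathbf 1_{\{\lambda_E^-<\kappa\}}+|h-\lambda_E^-|$.
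Hence $\frac2\kappa\int g\,d\mu\le2\mu\{\lambda_E^-<\kappa\}+\frac2\kappa\|h-\lambda_E^-\|_{L^1(\mu)}$. This is small once $\kappa$ is chosen small and then $L$ large, by $L^1$ convergence of $h$ to $\lambda_E^-$ (Lemma~\ref{lem:cone-bundle-comparison} and dominated convergence). This is exactly how the paper chooses $c$ and $L$.

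\textbf{Step 1: the real content of the $F$-estimate is missing.} You defer it to a lemma of Burguet whose content you misdescribe, in three places.

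First, for a single fixed $\ell$ it is false in general that $\mu$-a.e.\ $y$ with $\lambda_F^+(y)<-3\eta_0$ has Birkhoff averages of $\psi^F_\ell$ at least $2\eta_0$. Subadditivity gives only $\int\psi^F_\ell\,d\nu\le-\lambda_F^+(\nu)$, and the convergence as $\ell\to\infty$ is not uniform in the ergodic component $\nu$. So a fixed $\ell$ works only outside a set of components of small measure; the paper chooses $\ell$ by Egorov.

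Second, $\mathcal I_x$ and $G^F_{\eta,\ell,M}(x)$ are defined through \emph{backward} windows, so forward window averages at $f^tx$ do not put $t$ into $\mathcal I_x$. What you need is that, on each good component, the set of points all of whose backward averages exceed $\delta$ has positive measure. This comes from the maximal ergodic theorem for $f^{-1}$ applied to $\delta'-\phi$, not from Birkhoff's theorem. To get a \emph{lower} bound along the given $n_j$, you must then use the open sets $K_{\delta,M}$ of points that, within $M$ steps, reach a point whose backward averages of length $\le M$ all strictly exceed $\delta$. Combine these with the portmanteau inequality for open sets; your closed condition ``$\ge\eta$'' gives the inequality in the wrong direction.

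Third, this only controls the weak carrier $\mathcal I_x$. Passing to $G^F_{\eta,\ell,M}(x)$ needs the Pliss lemma on each component of $\mathcal I_x$. That costs a factor $\vartheta(\eta,\sup|\psi^F_\ell|)$ and an additive loss $2+n/M+M$ (Lemma~\ref{lem:finite-window-mildification}), and the $n/M$ term is what forces $M\ge M_0$. In particular your value $2\rho=a/8$ is unjustified: $G^F_{\eta,\ell,M}(x)\subset\{t:\psi^F_\ell(f^tx)\ge\eta/2\}$, and $\vartheta$ is small when $\sup|\psi^F_\ell|\gg\eta$.

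Any positive $\rho$ suffices, so your strategy survives. But Step~1 needs these ingredients (Lemmas~\ref{lem:quantitative-weak-time} and~\ref{lem:finite-window-mildification}) to become a proof. Also, choose $\eta$ rational from the outset rather than by shrinking. A smaller $\eta$ enlarges the components of $\mathcal I_x$, and hence the set of $q$ to be tested, so $G^F_{\eta,\ell,M}(x)$ is not obviously monotone in $\eta$.
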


We deduce the theorem from the following two density estimates.

\begin{proposition}
\label{lem:sliding-suffix-saturation}
Let \(x\in U\) and \(n_j\to\infty\) satisfy  \(
 \mu_{n_j}^x\weakstar\mu
\). Suppose that
\[
 \lambda_E^-(y)>0
 \qquad\text{for \(\mu\)-almost every }y.
\]
Then, for every \(\varepsilon>0\), there exist \(c\in \mathbb Q_{>0}\) and \(L\ge1\)
such that, for every fixed \(N\ge L\),
$$
 \limsup_{j\to\infty}\frac1{n_j}
 \#\left([0,n_j)\setminus S_{c,L,N}^{E}(x)\right)
 <\varepsilon.
$$
\end{proposition}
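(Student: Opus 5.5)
The plan is a maximal-inequality (covering) argument for the sliding window averages, combined with the weak-$\ast$ convergence $\mu_{n_j}^x\to\mu$. Write $\varphi_L:=\frac1L A_L^E$, a continuous function on $\overline U$ with $\varphi_L\ge -C$ uniformly, by the bound $|A_n^E|\le Cn$.

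\emph{Choice of parameters.} Fix $\varepsilon>0$ and an auxiliary $\varepsilon'>0$ to be chosen in terms of $\varepsilon$. Since $\lambda_E^-(y)>0$ for $\mu$-a.e.\ $y$, choose a rational $a>0$ with $\mu\{\lambda_E^-\le 2a\}<\varepsilon'$. By \eqref{eq:cone-exponent-identification} in Lemma~\ref{lem:cone-bundle-comparison}, $\varphi_L\to\lambda_E^-$ $\mu$-a.e., so for $L$ large we have $\mu\{\varphi_L\le a\}<2\varepsilon'$. Put $c:=a/2\in\mathbb Q_{>0}$. The set $\{\varphi_L\le a\}$ is closed and $\mu_{n_j}^x\to\mu$, so the portmanteau theorem gives
\[
\limsup_j \frac1{n_j}\#\{0\le s<n_j:\varphi_L(f^sx)\le a\}\le 2\varepsilon'.
\]

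\emph{Covering the bad times.} Fix $N\ge L$. If $n\notin S^E_{c,L,N}(x)$ with $N\le n<n_j$, there is $r\in[N,n]$ such that $J_n:=[n-r,n-L]_{\mathbb Z}$ satisfies $\sum_{s\in J_n}(\varphi_L(f^sx)-c)<0$. Splitting this sum according to whether $\varphi_L(f^s x)\ge a$ or not gives
\[
(a-c)\,|J_n|<(C+a)\,D(J_n),\qquad D(J):=\#\{s\in J:\varphi_L(f^sx)<a\}.
\]
Now cover the bad times, together with the initial segment $[0,N)$ (which contributes $N/n_j\to0$), by the intervals $J_n^+:=[n-r,n]\supset J_n$. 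By the one-dimensional Vitali/Besicovitch lemma, extract a subfamily whose union is the same and with overlap multiplicity at most $2$. Summing the displayed inequality over this subfamily bounds the total length of the $J_n$'s by $\frac{2(C+a)}{a-c}\,\#\{s<n_j:\varphi_L(f^sx)<a\}$. After dividing by $n_j$ and taking $\limsup$, this is at most $\frac{4(C+a)}{a-c}\varepsilon'$.

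\emph{Main obstacle.} The difficulty is passing from the window-start intervals $J_n$ to $J_n^+$, which also contain the last $L$ indices $(n-L,n]$. When $r$ is close to $N$ (possibly $N=L$), the ratio $|J_n^+|/|J_n|$ can be as large as $L$. This factor cannot simply be absorbed into $\varepsilon'$, because $L$ is chosen after $\varepsilon'$. To resolve it I would treat long and short suffixes separately.
\begin{itemize}
\item For $r\ge 2L$ the ratio is at most $2$, and the estimate above applies directly.
\item For short suffixes $r<2L$, a negative sum over $J_n$ forces a window start $t\in J_n$ with $\varphi_L(f^tx)<c$. Then $n\in[t,t+2L]$, so these bad $n$ lie within distance $2L$ after a \emph{strongly} bad window, one with $\varphi_L<c=a/2$ rather than merely $<a$.
\end{itemize}
For the short case I would use superadditivity, $A_{mL}^E\ge\sum_i A_L^E\circ f^{iL}$, together with $\varphi_{mL}\to\lambda_E^-$. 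The goal is to show that the density of times within $2L$ after a strongly bad window is controlled by $\mu\{\varphi_{L'}\le a\}$ for a suitable multiple $L'$ of $L$, uniformly in $L$. If that fails, the fallback is to re-choose the block length as $L'=mL$ with $m$ depending on $\varepsilon$: compare a window of length $L'$ with the average of its $m$ constituent $L$-blocks, so that the boundary factor $L$ becomes a factor of $m$ relative to the block count. Once this bookkeeping is settled, choosing $\varepsilon'$ small in terms of $\varepsilon$, $C$ and $a$ gives $\limsup_j n_j^{-1}\#([0,n_j)\setminus S^E_{c,L,N}(x))<\varepsilon$ for every fixed $N\ge L$.
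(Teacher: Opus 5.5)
\textbf{There is a genuine gap: the proposal is incomplete at the step you flag as the main obstacle, and its final quantifier choice is circular.}

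The short/long suffix split, the superadditivity comparison and the fallback block length $L'=mL$ are only sketched, and none of them is needed. The missing observation is that you should not cover the bad time $n$ itself. Count instead $m:=n-L$. Since $n\ge N\ge L$, $m$ is nonnegative, the map $n\mapsto n-L$ is injective, and $m$ is exactly the \emph{right endpoint} of the window-start interval $J_n=[n-r,n-L]_{\mathbb Z}$ on which the defect is large. Now apply the greedy one-sided covering: take the largest remaining $m$, remove its witness interval, and repeat. This is the weak-$(1,1)$ bound of Lemma~\ref{lem:one-sided-discrete-maximal}, and it yields pairwise disjoint intervals $J_n$ covering all such $m$. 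So the number of bad $n\in[N,n_j)$ is at most the total length of disjoint window-start intervals. The ratio $|J_n^+|/|J_n|$ never enters, and no case distinction on $r$ is required. This is how the paper argues.

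The second, independent gap is in the order of quantifiers. Bounding $\varphi_L\ge -C$ on all of $\{\varphi_L<a\}$ gives the constant $(C+a)/(a-c)=2(C+a)/a$, so your final bound is of order $(C/a)\,\mu\{\lambda_E^-\le 2a\}$. You choose $a$ after $\varepsilon'$ and then want $\varepsilon'$ small in terms of $a$, which is circular. Reordering does not help, because $a^{-1}\mu\{\lambda_E^-\le 2a\}$ need not tend to $0$ as $a\to0$: ergodic components of $\mu$ may have exponents accumulating at $0$.

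The fix is to weight bad times by the size of the defect. Set $d_L:=(2c-\varphi_L)_+$. A bad window then gives $\sum_{s\in J_n}d_L(f^sx)>c\,|J_n|$, and
\[
 d_L\le(2c-\lambda_E^-)_++|\varphi_L-\lambda_E^-|,
 \qquad
 \frac1c\int(2c-\lambda_E^-)_+\,d\mu\le 2\mu\{0<\lambda_E^-<2c\}.
\]
So points where $\varphi_L\approx\lambda_E^-\in(0,2c)$ cost $O(c)$ rather than $O(C)$. The parameters are chosen in this order:
\begin{enumerate}
\item Choose a rational $c$ with $2\mu\{0<\lambda_E^-<2c\}<\varepsilon/2$.
\item Then choose $L$ with $c^{-1}\|\varphi_L-\lambda_E^-\|_{L^1(\mu)}<\varepsilon/2$, using dominated convergence.
\end{enumerate}
Apply the maximal inequality to $u_s=d_L(f^sx)$ at level $c$, and use $\mu_{n_j}^x\weakstar\mu$ with $d_L$ continuous. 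This gives $\limsup_j n_j^{-1}\#([0,n_j)\setminus S^E_{c,L,N}(x))\le c^{-1}\int d_L\,d\mu<\varepsilon$. In your indicator language, the same fix separates the times with $-a\le\varphi_L<a$, which cost $O(a)$ each, from those with $\varphi_L<-a$, whose $\mu$-measure tends to $0$ as $L\to\infty$ with $a$ fixed.
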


\begin{proposition}
\label{lem:subsequence-mild-carrier}
Let \(x\in U\) and \(n_j\to\infty\) satisfy  \(
 \mu_{n_j}^x\weakstar\mu
\).
Suppose that
\(
 \mu\{y:\lambda_F^+(y)<0\}>0.
\)
Then there exist \(\eta\in \mathbb Q_{>0}, \gamma>0\), \(\ell\ge1\), and \(M_0\ge1\)
such that, for every fixed \(M\ge M_0\),
$$
 \liminf_{j\to\infty}\frac1{n_j}
 \#\bigl(G_{\eta,\ell,M}^{F}(x)\cap[0,n_j)\bigr)
 \ge\gamma.
$$
\end{proposition}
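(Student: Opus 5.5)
The plan is to reduce the statement to a combinatorial Pliss-type lemma for the real sequence \(a_n:=\psi_\ell^F(f^nx)\), in the spirit of Burguet~\cite{BurguetCenter}. The first task is to extract, from the hypothesis \(\mu\{y:\lambda_F^+(y)<0\}>0\), a positive-density family of long blocks along \(n_j\) on which the averages of \(a_n\) are uniformly positive.

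\emph{Step 1: an \(F\)-contracting invariant set with positive Birkhoff averages of \(\psi_\ell^F\).} Choose a rational \(\eta>0\) with
\[
Z_0:=\{y\in\Lambda:\lambda_F^+(y)<-4\eta\},\qquad \mu(Z_0)>0 .
\]
On \(\Lambda\) we have \(\widetilde F=F\), which is \(Df\)-invariant. Submultiplicativity of \(\|Df^n|_F\|\), written over the \(\ell\) residue classes, gives
\[
\log\|Df_y^n|_{F}\|\le \frac1\ell\sum_{j=0}^{n-1}\log\|Df^\ell_{f^jy}|_F\|+C\ell
\]
for every \(\ell\ge1\). Hence for \(\mu\)-a.e.\ \(y\in Z_0\) the forward Birkhoff averages of \(\psi_\ell^F\) are at least \(4\eta\). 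We fix some \(\ell\ge1\). By the ergodic theorem, the function
\[
y\mapsto \frac1K\sum_{i=0}^{K-1}\psi_\ell^F(f^iy)
\]
exceeds \(3\eta\) on a subset of \(Z_0\) of measure at least \(\mu(Z_0)/2\), for all sufficiently large \(K\).

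\emph{Step 2: transfer to the orbit of \(x\) along \(n_j\).} Since \(\psi_\ell^F\) is continuous on \(\overline U\), the set
\[
O_K:=\Bigl\{y\in U:\frac1K\sum_{i<K}\psi_\ell^F(f^iy)>3\eta\Bigr\}
\]
is open. The portmanteau theorem and \(\mu^x_{n_j}\to\mu\) then give
\[
\liminf_j\frac1{n_j}\#\{0\le n<n_j:f^nx\in O_K\}\ge\mu(O_K)\ge\mu(Z_0)/2=:2\gamma_0 .
\]
Thus the sequence \((a_n)\) has, in the window \([0,n_j)\), a proportion at least \(2\gamma_0\) of starting times of \(K\)-blocks with average \(>3\eta\). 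Moreover \(|a_n|\le C_0\) uniformly.

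\emph{Step 3: Pliss lemma and carrier extraction.} We apply Pliss' lemma backwards inside long stretches. Each time \(p\) such that all backward sums \(\sum_{j=p-k}^{p-1}a_j\ge\eta k\) for \(k\le\min\{M,p\}\) contributes the interval \([p-M,p)\) to \(\mathcal I_x\). The block-average information from Step 2 should force such \(p\) to occur with positive density (depending only on \(\gamma_0,\eta,C_0\)). Then, inside each connected component \(I\) of \(\mathcal I_x\), a second Pliss-type selection (the content of Burguet's lemma on \(G_a^{\eta,M}\)) should give the times \(t\) with \(\sum_{q}^{t}a_j\ge\frac\eta2(t+1-q)\) for all \(q\in I\), \(q\le t\). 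These times should form a proportion of \(I\) bounded below independently of \(M\). Summing over components meeting \([0,n_j)\), and absorbing the boundary losses of size \(O(M)\) which are negligible as \(n_j\to\infty\), yields the lower density \(\gamma>0\) for all \(M\ge M_0\).

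The main obstacle is Step 3. The difficulty is to make the density constant independent of \(M\) while only knowing averages over blocks of a fixed length \(K\), not over arbitrary lengths up to \(M\). The first thing I would try is to concatenate consecutive good \(K\)-blocks: a fraction \(2\gamma_0\) of good blocks, together with the bound \(|a_n|\le C_0\), gives a sequence whose partial sums over a positive-density set of long stretches have slope roughly \(\eta\). Then I would invoke the quantitative Pliss lemma, which guarantees a proportion \(\ge(\eta/2)/(C_0-\eta/2)\) of \(\eta\)-hyperbolic (backward) times in any stretch with average slope \(\ge\eta\). This proportion is independent of \(M\), which is exactly what the uniformity in \(M\ge M_0\) requires.
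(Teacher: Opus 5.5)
Your outline matches the paper's last step: a positive lower density of $\mathcal I_x$ along $n_j$, followed by a Pliss selection inside its components as in Lemma~\ref{lem:finite-window-mildification}. However, two steps fail as written.

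\textbf{Step~1 reads the subadditivity inequality in the wrong direction.} Since $\psi_\ell^F=-\frac1\ell\log\|Df^\ell|_{\widetilde F}\|$, your display says $\sum_{j<n}\psi_\ell^F(f^jy)\le-\log\|Df^n_y|_F\|+C\ell$. This only gives $\lim_n\frac1n\sum_{j<n}\psi_\ell^F(f^jy)\le-\lambda_F^+(y)$, which is an upper bound. For a fixed $\ell$ these averages can be negative even though $\lambda_F^+<0$: take $\ell=1$ at a fixed point where $Df|_F$ is a non-normal contraction of norm larger than one. So ``we fix some $\ell\ge1$'' is wrong, and this is exactly where the freedom to choose $\ell$ is needed. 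By Kingman, $\int\psi_\ell^F\,d\nu\to-\lambda_F^+(\nu)$ for every ergodic component $\nu$ with $\lambda_F^+(\nu)\le-2a$. An Egorov-type argument over the ergodic decomposition then gives a single $\ell$ with $\int\psi_\ell^F\,d\nu>a$ on a set of components of $\tau_\mu$-measure at least half of $\mu\{\lambda_F^+\le-2a\}$, as the paper does.

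\textbf{Step~3 has the more serious gap.} With $K$ fixed, what you retain is a lower density $2\gamma_0$ of starts of $K$-blocks with average $>3\eta$, plus $|a_n|\le C_0$. This cannot produce times $p$ with $\sum_{j=p-k}^{p-1}a_j\ge\eta k$ for all $k\le M$ once $M\gg K$. Consider a $T$-periodic sequence with $K\le\gamma_0T$, equal to $4\eta$ on a run of length $3\gamma_0T$ in each period and to $-C_0$ elsewhere, with $C_0$ large enough that each period has negative sum. It has your block property, yet every backward window of length $M\gg T$ has negative sum, so $\mathcal I_x$ has zero density. Hence the ``concatenation'' claim is false, and no Pliss lemma can rescue it: the backward information at scale $M$ must be extracted from $\mu$ separately for each $M$.

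The paper does this in Lemma~\ref{lem:quantitative-weak-time}. For each good ergodic component, the maximal ergodic theorem for $f^{-1}$ shows that the set $H_\delta$ of points all of whose backward averages exceed $\delta$ has positive measure, so $\bigcup_{k\ge1}f^{-k}H_\delta$ has full measure. Its open truncations $K_{\delta,M}$ then satisfy $\mu(K_{\delta,M})\ge m_0/2$ for $M\ge M_0$. The portmanteau theorem together with $f^tx\in K_{\delta,M}\Rightarrow t\in\mathcal I_x$ gives a density bound independent of $M$.

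Your route can also be saved by letting the block length depend on $M$. Once Step~1 is corrected, your Step~2 bound holds with one constant for all $K\ge K_0$. Let $\vartheta=2\eta/(C_0-\eta)$ be the Pliss proportion, and take $K\ge2M/\vartheta$. Select disjoint good $K$-blocks greedily and apply Pliss inside each. At least $\vartheta K/2$ of the Pliss times $p$ of a block lie at distance $\ge M$ from its start, and each such $p$ gives $p-1\in\mathcal I_x$, so $\mathcal I_x$ has lower density at least $\vartheta\gamma_0$.

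Finally, the last Pliss selection loses one time per component of $\mathcal I_x$, up to about $n_j/M$ times in total. This is not negligible as $n_j\to\infty$; it is absorbed by taking $M_0$ large.
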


Assuming Propositions~\ref{lem:sliding-suffix-saturation}
and~\ref{lem:subsequence-mild-carrier}, we first prove
Theorem~\ref{thm:empirical-synchronization}.

\begin{proof}[Proof of Theorem~\ref{thm:empirical-synchronization}]
Let \(\eta,\gamma,\ell,M_0\) be given by
Proposition~\ref{lem:subsequence-mild-carrier}.  Apply
Proposition~\ref{lem:sliding-suffix-saturation} with
\(\varepsilon=\gamma/2\), obtaining \(c\in \mathbb Q_{>0}\) and \(L\ge1\).  Fix \(N\ge L\) and \(M\ge M_0\). For every \(j\), we have
\[
\begin{aligned}
&\#\Bigl(
S_{c,L,N}^{E}(x)
 \cap G_{\eta,\ell,M}^{F}(x)
 \cap[0,n_j)
 \Bigr)\\
&\qquad\ge
 \#\Bigl(
 G_{\eta,\ell,M}^{F}(x)\cap[0,n_j)
 \Bigr)
 -
 \#\Bigl(
 [0,n_j)\setminus S_{c,L,N}^{E}(x)
 \Bigr).
\end{aligned}
\]
After dividing by \(n_j\), Proposition~\ref{lem:subsequence-mild-carrier}
bounds the lower limit of the first term on the right by \(\gamma\),
whereas Proposition~\ref{lem:sliding-suffix-saturation} bounds the
upper limit of the second by a number smaller than
\(\gamma/2\). As a consequence,
\[
\liminf_{j\to\infty}\frac1{n_j}
\#\Bigl(
 S_{c,L,N}^{E}(x)
 \cap G_{\eta,\ell,M}^{F}(x)
 \cap[0,n_j)
\Bigr)
>
\frac{\gamma}{2}.
\]
Choosing any rational \(0<\rho<\gamma/2\) completes the proof.
\end{proof}

We now turn to the proofs of the two density estimates.

\subsubsection{The \(E\)-estimate}

Let us record the weak-\((1,1)\) estimate for the one-sided discrete
Hardy--Littlewood maximal operator
(see e.g. \cite[Chapter~2]{Duoandikoetxea}).  We provide a proof for
completeness.

\begin{lemma}
\label{lem:one-sided-discrete-maximal}
Let \(u_0,\ldots,u_{n-1}\ge0\), and define
\[
 \mathscr M^-u(m)
 :=
 \max_{0\le q\le m}
 \frac1{m-q+1}\sum_{s=q}^{m}u_s,
 \qquad 0\le m<n.
\]
Then, for every \(a>0\),
$$
 \#\{0\le m<n:\mathscr M^-u(m)>a\}
 \le
 \frac1a\sum_{s=0}^{n-1}u_s.
$$
\end{lemma}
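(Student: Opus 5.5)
We will prove the weak-\((1,1)\) bound by a backward greedy covering argument, as in the classical Riesz rising-sun lemma. Let \(B:=\{0\le m<n:\mathscr M^-u(m)>a\}\). For each \(m\in B\) we fix a witness \(q(m)\in[0,m]\) with
\[
\sum_{s=q(m)}^{m}u_s>a\,(m-q(m)+1),
\]
and we write \(I_m:=[q(m),m]_{\mathbb Z}\). Every interval \(I_m\) contains its right endpoint \(m\), so \(B\subset\bigcup_{m\in B}I_m\). The aim is to extract pairwise disjoint intervals from this family whose union still contains \(B\), and then to sum the defining inequality over them.

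The selection runs from right to left. We let \(m_1:=\max B\) and take \(I_{m_1}\). Inductively, once \(I_{m_1},\ldots,I_{m_r}\) have been chosen, we let \(m_{r+1}\) be the largest element of \(B\) that is strictly smaller than \(q(m_r)\), and we stop when no such element exists. By construction \(m_{r+1}<q(m_r)\), so the chosen intervals are pairwise disjoint. Every element of \(B\) lying in \([q(m_r),m_r]\) is already covered by \(I_{m_r}\). No element of \(B\) lies strictly between \(m_{r+1}\) and \(q(m_r)\), because \(m_{r+1}\) was chosen maximal. It follows that \(B\subset\bigsqcup_r I_{m_r}\).

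Summing the witness inequalities over the selected intervals, and using disjointness together with \(u_s\ge0\), gives
\[
a\,\#B\le a\sum_r|I_{m_r}|<\sum_r\sum_{s\in I_{m_r}}u_s\le\sum_{s=0}^{n-1}u_s .
\]
Dividing by \(a\) yields the claim (trivially if \(B=\emptyset\)). The only point that needs care is the covering step, namely that the greedy choice leaves no gaps in \(B\). This works because the maximal operator is one-sided and each interval is anchored at its right endpoint, so no Vitali-type factor \(3\) is lost.
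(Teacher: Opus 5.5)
Your proof is correct and is essentially the paper's argument. The paper also selects witness intervals greedily from right to left: it takes the largest remaining bad index, removes its witness interval, and repeats. This yields pairwise disjoint intervals covering the bad set, over which the witness inequalities are summed.
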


\begin{proof}
Let
\(
 \mathcal E=\{0\le m<n:\mathscr M^-u(m)>a\}.
\)
If \(\mathcal E=\varnothing\), there is nothing to prove.  Otherwise,
set \(\mathcal E_0=\mathcal E\).  Inductively, as long as
\(\mathcal E_{j-1}\ne\varnothing\), define
\(
 r_j:=\max\mathcal E_{j-1}.
\)
Since \(r_j\in\mathcal E\), the definition of \(\mathscr M^-u(r_j)\)
provides an integer \(0\le\ell_j\le r_j\) such that
\begin{equation}\label{eq:maximal-witness-interval}
 \sum_{s=\ell_j}^{r_j}u_s>a(r_j-\ell_j+1).
\end{equation}
Set
\[
 I_j:=[\ell_j,r_j]_{\mathbb Z},
 \qquad
 \mathcal E_j:=\mathcal E_{j-1}\setminus I_j.
\]
Because \(r_j\in\mathcal E_{j-1}\cap I_j\), the cardinality of the
remaining set strictly decreases, so the construction stops after
finitely many steps, say with \(\mathcal E_J=\varnothing\).
By construction,
\begin{itemize}
\item the intervals $I_j$, $1\le j \le J$ are pairwise disjoint,
\item
\(
 \mathcal E\subset\bigcup_{j=1}^J I_j.
\)
\end{itemize}
Together with \eqref{eq:maximal-witness-interval}, we obtain
\[
 a\,\#\mathcal E
 \le a\sum_{j=1}^J|I_j|
 <\sum_{j=1}^J\sum_{s\in I_j}u_s
 \le\sum_{s=0}^{n-1}u_s.
\]
The proof is complete.
\end{proof}

Using this tool, we can provide the proof of Proposition~\ref{lem:sliding-suffix-saturation} as follows:

\begin{proof}[Proof of Proposition~\ref{lem:sliding-suffix-saturation}]
Write \(b_L=L^{-1}A_L^E\). By Lemma~\ref{lem:cone-bundle-comparison},
\(b_L\to\lambda_E^-\) almost everywhere; since \(\{b_L\}\) is uniformly
bounded, dominated convergence gives \(b_L\to\lambda_E^-\) in
\(L^1(\mu)\). Moreover, \(\lambda_E^->0\) a.e. implies
\[
  \frac1c\int(2c-\lambda_E^-)_+\,d\mu
  \le 2\mu\{0<\lambda_E^-<2c\}\to0 \qquad(c\to0).
\]
Thus, for \(\varepsilon>0\), one can choose \(c\in \mathbb Q_{>0}\) and
\(L\in\mathbb N\) such that
\[
  \frac1c\int\bigl((2c-\lambda_E^-)_++|b_L-\lambda_E^-|\bigr)\,d\mu
  <\varepsilon.
\]
Put \(d_L=(2c-b_L)_+\). With the fact \(d_L\le(2c-\lambda_E^-)_++|b_L-\lambda_E^-|\), the above estimate yields
\begin{equation}\label{eq:block-defect-integral}
  \frac1c\int d_L\,d\mu<\varepsilon.
\end{equation}

Fix \(N\ge L\). If \(n\ge N\) is not in
\(S_{c,L,N}^{E}(x)\), then there exists \(N\le r\le n\) such that
\[
  \sum_{s=n-r}^{n-L}b_L(f^sx)<c(r-L+1).
\]
Since we have \(b_L\ge2c-d_L\) by definition, this gives
\[
  \sum_{s=m-K+1}^{m}d_L(f^sx)>cK,
\]
where \(K=r-L+1\) and \(m=n-L\).
Thus \(\mathscr M^-u(m)>c\) for \(u_s=d_L(f^sx)\). Bad indices
\(n\ge N\) correspond to distinct \(m=n-L\), so
Lemma~\ref{lem:one-sided-discrete-maximal} gives
\[
  \#\bigl([0,n_j)\setminus S_{c,L,N}^{E}(x)\bigr)
  \le N+\frac1c\sum_{s=0}^{n_j-1}d_L(f^sx)
\]
for all sufficiently large $j$.
Since \(d_L\) is continuous and \(\mu_{n_j}^x\weakstar\mu\),
\[
  \lim_{j\to \infty}\frac1{n_j}\sum_{s=0}^{n_j-1}d_L(f^sx)
  =\int d_L\,d\mu.
\]
Dividing by \(n_j\), taking \(\limsup_{j\to\infty}\), and using
\eqref{eq:block-defect-integral}, we get
\[
  \limsup_{j\to\infty}
  \frac1{n_j}\#\bigl([0,n_j)\setminus S_{c,L,N}^{E}(x)\bigr)
  \le \frac1c\int d_L\,d\mu
  <\varepsilon.
\]
This completes the proof.
\end{proof}

\subsubsection{The \(F\)-estimate}

Recall that \(\mathcal I_x\) is the interval union used to define
\(G_{\eta,\ell,M}^{F}(x)\).

\begin{lemma}
\label{lem:finite-window-mildification}
For every \(\eta>0\), there exists
\(\vartheta=\vartheta(\eta)\in(0,1)\) such that, for every
\(x\in U\) and all integers \(\ell,M,n\ge1\),
\[
 \#\bigl(G_{\eta,\ell,M}^{F}(x)\cap[0,n)\bigr)
 \ge
 \vartheta\,\#\bigl(\mathcal I_x\cap[0,n)\bigr)
 -\left(2+\frac nM+M\right).
\]
\end{lemma}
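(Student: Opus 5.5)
The plan is to argue one connected component of \(\mathcal I_x\) at a time and to run a Pliss-type selection inside each. The parameter \(\vartheta\) should depend only on \(\eta\) and on a uniform bound \(B\) for \(|\psi_\ell^F|\). Such a bound exists independently of \(\ell\): by submultiplicativity \(\|Df^\ell_x|_{\widetilde F(x)}\|\le\|Df\|_\infty^\ell\), and a lower bound follows from \(\|Df^{-1}\|_\infty\). The error term \(2+n/M+M\) should absorb three boundary effects:
\begin{itemize}
\item the last point of each component, which fails the requirement \(t+1\in I\);
\item the single component that may be truncated by the cut at \(n\), which costs at most about \(M\) points because windows have length \(M\);
\item a bounded loss per window when windows are glued together.
\end{itemize}
Since every window has length \(M\), the number of essentially disjoint windows that meet \([0,n)\) is at most about \(n/M\).

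First I would fix a component \(I=[\alpha,\beta)_{\mathbb Z}\) of \(\mathcal I_x\). Selecting from right to left, I would extract witnesses \(p_1>p_2>\cdots\) whose windows \([\max\{0,p_i-M\},p_i)\) are pairwise disjoint and cover a fixed proportion of \(I\). One way is a Vitali-type choice: take the largest witness, discard every witness whose window meets the chosen one, and repeat. The windows of the discarded witnesses lie within \(2M\) of a chosen window, so the chosen windows cover at least one third of \(I\), up to boundary errors of order \(M\). On each chosen window the witness condition gives backward sums \(\sum_{j=p_i-k}^{p_i-1}\psi_\ell^F(f^jx)\ge\eta k\) for all \(k\le\min\{M,p_i\}\). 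This is exactly the input of the backward Pliss lemma. It yields at least \(\frac{\eta/2}{B-\eta/2}\cdot|\text{window}|\) times \(t\) in the window such that \(\sum_{j=q}^{t}\psi_\ell^F(f^jx)\ge\frac\eta2(t+1-q)\) for every \(q\) in the window with \(q\le t\).

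The main obstacle is upgrading \emph{for every \(q\) in the window} to \emph{for every \(q\in I\)}, as the definition of \(G_{\eta,\ell,M}^{F}(x)\) demands. For \(q\) lying left of the window \([p_i-M,p_i)\), I would split \([q,t]\) at the left end \(p_i-M\) and, to the left of that point, concatenate complete witness windows. Each complete witness window contributes at least \(\eta\) times its length, and hence at least \(\frac\eta2\) times its length. The difficulty is the parts of \(I\) covered only by overlapping, non-selected windows, where the sum is not controlled. To handle this, I would instead run Pliss on the whole component at once, using a cover of \(I\) by witness windows whose consecutive overlaps are ordered from right to left. On an overlap \([p'-M,p)\) with \(p'<p\), the witness condition at \(p\) controls the sums ending at \(p\). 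I expect this is the place where losing a constant number of points per window (hence \(O(n/M)\) in total) is unavoidable, and where one must use that the carrier threshold is \(\eta/2\) rather than \(\eta\).

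Summing the counts over all components meeting \([0,n)\) then gives a lower bound of the form \(\vartheta\,\#(\mathcal I_x\cap[0,n))-(2+n/M+M)\). Here \(\vartheta\) is a fixed fraction of \(\frac{\eta/2}{B-\eta/2}\), reduced by the covering proportion from the Vitali step if that route is used.
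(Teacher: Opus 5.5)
\textbf{Gap: the hypothesis for Pliss on the whole component is never established.} You locate the crux correctly. The definition of $G^F_{\eta,\ell,M}(x)$ requires the $\frac\eta2$-bound for \emph{every} $q$ in the component with $q\le t$, so the Vitali selection followed by Pliss window by window cannot work. Your fallback, running Pliss on the whole component, is what the paper does. But you never prove the one hypothesis Pliss needs there: the sum of $\psi_\ell^F(f^jx)$ over the component, \emph{starting at its left endpoint}, has average at least $\eta$. The right-to-left cover is not carried out, and your expected per-window loss suggests the mechanism is missing.

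The missing idea is a left-to-right chain. Let $I=[u,v]$ be a component of $\mathcal I_x\cap[0,n)$ and set $t_0=u$. While $t_h\le v$, let $t_{h+1}:=p$ be the right endpoint of any defining window $[\max\{0,p-M\},p)$ containing $t_h$.
\begin{enumerate}
\item Since $1\le p-t_h\le\min\{M,p\}$, the witness inequality at $p$ with $k=p-t_h$ gives $\sum_{j=t_h}^{t_{h+1}-1}\psi_\ell^F(f^jx)\ge\eta(t_{h+1}-t_h)$.
\item The blocks $[t_h,t_{h+1})$ tile $[u,t_s)$ exactly, so the total has average $\ge\eta$ with no loss, even if the blocks are short and numerous.
\item $[u,t_s)$ lies in the component of $\mathcal I_x$ with left endpoint $u$ (truncation at $n$ only cuts on the right). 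Moreover $t_s=v+1$ if $v<n-1$, and $n\le t_s\le n+M-1$ otherwise.
\end{enumerate}
Backward Pliss on $[u,t_s)$ then gives at least $\vartheta(t_s-u)$ times $p\in[u+1,t_s]$ with $\sum_{j=q}^{p-1}\psi_\ell^F(f^jx)\ge\frac\eta2(p-q)$ for \emph{all} $u\le q<p$. These are exactly the $q$ the definition quantifies over, so $p-1\in G^F_{\eta,\ell,M}(x)$ whenever $p\neq t_s$.

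\textbf{Bookkeeping.} The accounting should be per component, not per window. There is no per-window loss. A loss of even one point per window would not fit the stated error, because a chain of overlapping length-$M$ windows covering $I$ can contain nearly $2\#I/M$ windows. That would give roughly $2n/M$ instead of $n/M$.

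In the paper, $n/M$ bounds the number of \emph{components}. Every component of $\mathcal I_x\cap[0,n)$ not containing $0$ or $n-1$ is a full component of $\mathcal I_x$ made of untruncated windows, so it has at least $M$ points; hence there are at most $2+n/M$ components. Each component loses at most the single point $p=t_s$. The exception is the component ending at $n-1$, which loses at most $1+(t_s-n)\le M$ points (the Pliss times in $[n+1,t_s]$, together with $t_s$). Summing yields exactly $\vartheta\,\#(\mathcal I_x\cap[0,n))-(2+n/M+M)$. Here $\vartheta$ is just the Pliss constant $\frac{\eta/2}{K-\eta/2}$, with no covering factor.
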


\begin{proof}
Fix \(\eta>0\).  Choose \(K>\eta\) such that
\(
 |\psi_\ell^F(w)|\le K
 \)
for every $w\in\overline U$ and every $\ell\ge1$.
Fix \(x\in U\) and integers \(\ell,M,n\ge1\).
Let \(\mathcal C_n\) be the family of connected components of
\(\mathcal I_x\cap[0,n)\).
Except possibly for the components containing \(0\) or \(n-1\),
each component contains at least \(M\) integers. Hence
\begin{equation}\label{eq:ncomponents}
 \#\mathcal C_n\le 2+
 \frac{
 \#\bigl(\mathcal I_x\cap[0,n)\bigr)
 }{M}\le2+\frac nM.
\end{equation}

Fix \(I=[u,v]_{\mathbb Z}\in\mathcal C_n\).
Starting from \(t_0=u\), whenever \(t_h\le v\), choose an interval
in the definition of \(\mathcal I_x\) containing \(t_h\), and
denote its right endpoint by \(t_{h+1}\).
Then
\[
 t_h<t_{h+1}\le t_h+M,
 \qquad
 \sum_{k=t_h}^{t_{h+1}-1}\psi_\ell^F(f^kx)
 \ge\eta(t_{h+1}-t_h).
\]
Stop at the first index \(s\) with \(t_s>v\).
Summing gives
\begin{equation*}
 \sum_{k=u}^{t_s-1}\psi_\ell^F(f^kx)
 \ge\eta(t_s-u).
\end{equation*}
Moreover, \([u,t_s)_{\mathbb Z}\) lies in the full component of
\(\mathcal I_x\) whose left endpoint is \(u\).
If \(v<n-1\), then \(t_s=v+1\); otherwise \(v+1\) would belong to
\(\mathcal I_x\cap[0,n)\), contradicting the choice of \(I\).
If \(v=n-1\), then
\[
 n\le t_s\le n+M-1.
\]

By Pliss' lemma \cite[Lemma~3.1]{ABV}, there exist
\(\vartheta=\vartheta(\eta,K)\in(0,1)\) and
\(\mathcal P_I\subset[u+1,t_s]_{\mathbb Z}\) such that
\(
 \#\mathcal P_I\ge\vartheta(t_s-u)
\)
and for each $p\in\mathcal P_I$,
\begin{equation}\label{eq:component-pliss-endpoints}
 \sum_{k=q}^{p-1}\psi_\ell^F(f^kx)
 \ge\frac\eta2(p-q)
 \qquad \text{for every }\ u\le q<p.
\end{equation}
For every \(p\in\mathcal P_I\setminus\{t_s\}\), both \(p-1\)
and \(p\) lie in the same component of \(\mathcal I_x\).
Since \(u\) is its left endpoint,
\eqref{eq:component-pliss-endpoints} gives
\(
 p-1\in G_{\eta,\ell,M}^{F}(x).
\)
Consequently, if \(v<n-1\), then
$$
 \#\bigl(G_{\eta,\ell,M}^{F}(x)\cap I\bigr)
 \ge\vartheta\,\#I-1.
$$
For the possible component \(I=[u,n-1]_{\mathbb Z}\),
discarding the times in \([n,t_s)_{\mathbb Z}\) gives
$$
 \begin{aligned}
 \#\bigl(G_{\eta,\ell,M}^{F}(x)\cap I\bigr)
 &\ge\vartheta(t_s-u)-1-(t_s-n)\\
 &\ge\vartheta\,\#I-M.
 \end{aligned}
$$
Summing over \(\mathcal C_n\) and using
\eqref{eq:ncomponents}, we obtain
\[
 \begin{aligned}
 \#\bigl(G_{\eta,\ell,M}^{F}(x)\cap[0,n)\bigr)
 &\ge
 \vartheta\,\#\bigl(\mathcal I_x\cap[0,n)\bigr)
 -\#\mathcal C_n-M\\
 &\ge
 \vartheta\,\#\bigl(\mathcal I_x\cap[0,n)\bigr)
 -\left(2+\frac nM+M\right).
 \end{aligned}
\]
\end{proof}

The next lemma turns a positive Birkhoff tail into weak carriers along the
same empirical subsequence.

\begin{lemma}
\label{lem:quantitative-weak-time}
Let \(\phi\in C(\mathbf{M})\), and set
\[
 \phi^*(y)
 :=
 \limsup_{q\to\infty}\frac1q
 \sum_{s=0}^{q-1}\phi(f^sy),
 \qquad
 \Phi_x
 :=
 \bigl(\phi(f^nx)\bigr)_{n\ge0}.
\]
For \(\delta>0\) and an integer \(M\ge1\), let
\(W_{\delta,M}^{\Phi_x}\) be the union of the integer intervals
\([\max\{0,p-M\},p)\) over all integers \(p\ge1\) satisfying
\[
 \sum_{q=p-r}^{p-1}\phi(f^qx)\ge\delta r
 \qquad \text{for every } 1\le r\le\min\{M,p\}.
\]
Suppose that \(\mu_{n_j}^x\weakstar\mu\) and that, for some
\(a,m_0>0\),
\[
 \mu\bigl(\{y:\phi^*(y)>a\}\bigr)\ge m_0.
\]
Then, for every \(0<\delta<a\), there exists \(M_0\ge1\) such that,
for every fixed integer \(M\ge M_0\),
\[
 \liminf_{j\to\infty}\frac1{n_j}
 \#\bigl(
 W_{\delta,M}^{\Phi_x}\cap[0,n_j)
 \bigr)
 \ge\frac{m_0}{2}.
\]
\end{lemma}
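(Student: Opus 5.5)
The plan is to convert the pointwise limsup condition on a set of $\mu$-measure at least $m_0$ into a continuous, finite-horizon condition whose empirical frequency passes to the limit along $n_j$. Then a Pliss-type covering (Vitali-style) argument transfers the finite-horizon good windows into membership in $W_{\delta,M}^{\Phi_x}$.

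\emph{Step 1 (finite horizon).} Fix $\delta<\delta'<a$. Since $\{\phi^*>a\}\subset\bigcup_{Q}\{y:\max_{1\le q\le Q}\frac1q\sum_{s<q}\phi(f^sy)>a\}$ increases in $Q$, choose $Q$ with
\[
\mu\Bigl(\Bigl\{y:\max_{1\le q\le Q}\tfrac1q\textstyle\sum_{s<q}\phi(f^sy)>a\Bigr\}\Bigr)>\tfrac34 m_0 .
\]
This set $V_Q$ is open because $\phi$ is continuous. By the Portmanteau theorem, $\liminf_j\mu^x_{n_j}(V_Q)\ge\mu(V_Q)>\frac34 m_0$. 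Hence for large $j$ at least $\frac34 m_0 n_j$ times $t<n_j$ are \emph{starts} of a forward block $[t,t+q)$ with $q\le Q$ and average $>a$.

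\emph{Step 2 (from forward blocks to backward-good endpoints).} The definition of $W$ requires endpoints $p$ all of whose backward partial sums of length $\le M$ have average $\ge\delta$; this is a Pliss/hyperbolic-time condition. Consider a maximal interval $J=[u,w)$ obtained by concatenating overlapping good forward blocks. Greedily select disjoint blocks covering the good starts inside $J$, each chosen from the leftmost uncovered good start. Within each maximal chain of consecutive selected blocks, the sum over any concatenation of full blocks exceeds $a$ times its length. The uncovered gaps are bad starts, so we restrict attention to chains.

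Apply Pliss' lemma (\cite[Lemma~3.1]{ABV}, as in Lemma~\ref{lem:finite-window-mildification}) to each chain of length $\ell_J$, using $|\phi|\le K$, with rates $a>\delta$. It gives a proportion $\vartheta'=(a-\delta)/(K-\delta)$ of times $p$ whose backward sums, as far back as the chain start, have average $\ge\delta$. Actually I would rather aim for density close to $1$, not merely $\vartheta'$. So I argue more directly: for $p$ in a chain, a backward window $[p-r,p)$ consists of full blocks plus one partial block of length $<Q$. The loss is at most $2KQ$, so the average is $\ge a-2KQ/r$, which is $\ge\delta$ once $r\ge R:=2KQ/(a-\delta)$. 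Short windows $r<R$ fail only for $p$ near a block boundary, so this must be handled differently.

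\emph{Step 3 (the main obstacle: short windows).} Handling windows with $r<R$ is the crux. Instead of endpoints $p$, I use that $W$ is a \emph{union of intervals} $[p-M,p)$. It suffices to find, for most times $t$ in a chain, some $p\in(t,t+M]$ in the chain that is backward-good for all $r\le M$. Pliss' lemma applied with rate gap $a$ versus $\delta$ gives such good endpoints with positive density in every chain segment of length $\ge R'$, for some $R'$ depending only on $a,\delta,K,Q$. Taking $M_0\gg R'$, every $t$ in a chain, except those within $M_0$ of the chain end, lies in $[p-M,p)$ for a good $p$. Chains shorter than $M_0$ contribute little: their total length divided by the number of chains is controlled, or they are absorbed by choosing $Q$ large relative to the error. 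If this absorption fails, I would fall back on Pliss density $\vartheta'$ times the covered count, accepting a smaller constant. However, the statement asks for $m_0/2$, so the margin $\frac34 m_0-\frac14 m_0$ must absorb the boundary losses $O(n_j/M+M)$.

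\emph{Step 4.} Summing over chains gives $\#(W\cap[0,n_j))\ge \frac34 m_0 n_j-O(n_j R'/M)-O(M)$. Choosing $M_0$ so that the $O(R'/M)$ term is below $m_0/4$ and letting $j\to\infty$ yields the bound $\ge m_0/2$.
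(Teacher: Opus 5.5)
There is a genuine gap, and it comes from what you keep after Step~1. From then on you use only $|\phi|\le K$ and the fact that, for large $j$, at least $\frac34 m_0 n_j$ times $t<n_j$ start a forward block of length at most $Q$ with average $>a$, where $Q$ is fixed before $M_0$. Membership of a time in $W_{\delta,M}^{\Phi_x}$ is a different kind of condition. It requires an endpoint $p$ all of whose backward windows $[p-r,p)$, $1\le r\le\min\{M,p\}$, have average $\ge\delta$, and $M\ge M_0$ is arbitrary.

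Your record/Pliss argument controls these sums only back to the chain start. A window of length up to $M$ can run into the preceding gap of bad starts, where $\phi$ may stay close to $-K$ for a long stretch. Nothing bounds chain lengths from below in terms of $M$, and the ``absorption'' of short chains is not justified. In fact the Step~4 inequality cannot follow from this information. Let $3\mid P$ and set $\phi(f^nx)=K$ when $n\bmod P<P/3$, and $\phi(f^nx)=-K$ otherwise. Each time in a positive stretch starts a length-one block of average $K>a$, so the good starts have density at least $1/3$ for every $Q$. Yet for $M\ge P$, every $p\ge P$ fails on the window $[p-P,p)$, whose average is $-K/3<\delta$. Hence $\#(W_{\delta,M}^{\Phi_x}\cap[0,n))\le P$ for all $n$. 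This orbit violates the lemma's hypothesis, but your Steps~2--4 use nothing beyond Step~1's output, so they cannot be valid.

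The missing ingredient is infinite-horizon backward information, extracted from $\mu$ itself and only then truncated to horizon $M$. The paper works on the ergodic components $\nu$ with $\int\phi\,d\nu>a$, which carry $\tau_\mu$-mass at least $m_0$.

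\begin{enumerate}
\item Applying the maximal ergodic theorem for $f^{-1}$ to $\delta'-\phi$ shows that the set $H_\delta$ of points $z$ with $\sum_{q=1}^m\phi(f^{-q}z)>m\delta$ for every $m\ge1$ has positive $\nu$-measure.
\item By ergodicity, $K_\delta=\bigcup_{k\ge1}f^{-k}H_\delta$ has full $\nu$-measure, so $\mu(K_\delta)\ge m_0$.
\item Let $H_{\delta,M}\supset H_\delta$ impose the same condition only for $m\le M$, and let $K_{\delta,M}=\bigcup_{k=1}^M f^{-k}H_{\delta,M}$. These sets are open, and $\mu(K_{\delta,M})\ge m_0/2$ for all $M\ge M_0$.
\item Portmanteau then gives $\liminf_j\mu^x_{n_j}(K_{\delta,M})\ge m_0/2$.
\item Finally, $f^\ell x\in K_{\delta,M}$ means exactly that $p=\ell+k$, for some $1\le k\le M$, is an admissible endpoint whose interval contains $\ell$.
\end{enumerate}

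This route needs no chain decomposition or Pliss lemma, and it gives the constant $m_0/2$ with no loss.
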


\begin{proof}
Choose
\(
 \delta<\delta'<a,
\)
and write the ergodic decomposition of \(\mu\) as
\(
 \mu=\int\nu\,d\tau_\mu(\nu).
\)
By Birkhoff's ergodic theorem, for \(\tau_\mu\)-almost every ergodic component \(\nu\), we have
\(
 \phi^*(y)=\int\phi\,d\nu
 \)
 for \(\nu\)-almost every $y$.
Thus, by taking
$
\Omega_a=\left\{\nu: \int\phi\,d\nu>a\right\},
$
we conclude that
\begin{equation}\label{eq:positive-ergodic-components}
 \tau_\mu(\Omega_a)
 =
 \int
 \nu\{y:\phi^*(y)>a\}\,d\tau_\mu(\nu) =
 \mu\{y:\phi^*(y)>a\}
 \ge m_0.
\end{equation}

Let
\[
 A_{\delta'}
 =
 \left\{
 z:
 \sum_{q=0}^{m-1}
 \bigl(\delta'-\phi(f^{-q}z)\bigr)>0
 \text{ for some }m\ge1
 \right\}.
\]
For any \(\nu\in \Omega_a\), by applying the maximal ergodic theorem for \(f^{-1}\) to the function
\(\delta'-\phi\), we get
\(
 \int_{A_{\delta'}}(\delta'-\phi)\,d\nu\ge0.
\)
Note also that
\(
 \int(\delta'-\phi)\,d\nu
 =
 \delta'-\int\phi\,d\nu
 <0.
\)
It follows that
\(
 \int_{A_{\delta'}^c}(\delta'-\phi)\,d\nu<0,
\)
and hence
\(
 \nu(A_{\delta'}^c)>0.
\)
Define
\[
 H_\delta=
 \left\{z:\sum_{q=1}^{m}\phi(f^{-q}z)>m\delta
 \text{ for every }m\ge1\right\}.
\]
By definition, we see $A_{\delta'}^c\subset f^{-1}H_\delta$ and hence
$
\nu(H_\delta)=\nu(f^{-1}H_\delta)\ge \nu(A_{\delta'}^c)>0.
$
The ergodicity of $\nu$ gives
$\nu(K_\delta)=1$, where $K_\delta=\bigcup_{k\ge1}f^{-k}H_\delta$. Using arbitrariness of $\nu\in \Omega_a$ and \eqref{eq:positive-ergodic-components}, we obtain
\begin{equation}\label{eq:Km}
 \mu(K_\delta)\ge \int_{\Omega_a}\nu(K_{\delta})d\tau_{\mu}(\nu)=\tau_{\mu}(\Omega_a)\ge m_0.
\end{equation}

For every $M\ge 1$, consider the two open sets
\[
 H_{\delta,M}=
 \left\{z:\sum_{q=1}^{m}\phi(f^{-q}z)>m\delta
 \text{ for every }1\le m\le M\right\},\quad\quad  K_{\delta,M}=\bigcup_{k=1}^{M}f^{-k}H_{\delta,M}.
\]
Note that $\bigcup_{k=1}^{M}f^{-k}H_\delta$ increases to $K_{\delta}$ as $M\to \infty$. This together with \eqref{eq:Km} and the fact $H_\delta\subset H_{\delta,M}$ for every $M\ge 1$, yields $M_0\ge 1$ such that
$$
\mu(K_{\delta,M})\ge \mu\left(\bigcup_{k=1}^{M}f^{-k}H_\delta\right)\ge m_0/2\qquad \textrm{for every } M\ge M_0.
$$
Fix \(M\ge M_0\). Since $K_{\delta,M}$ is
open and $\mu_{n_j}^x\weakstar\mu$, it follows that
\begin{equation}\label{eq:OM-portmanteau}
 \liminf_{j\to\infty}\mu_{n_j}^x(K_{\delta,M})\ge \mu(K_{\delta,M})\ge \frac{m_0}{2}.
\end{equation}
We next relate \(K_{\delta,M}\) to the weak carrier $W_{\delta,M}^{\Phi_x}$ and show that
\begin{equation}\label{kfin}
f^{\ell}(x)\in K_{\delta,M}\Longrightarrow \ell \in W_{\delta,M}^{\Phi_x}.
\end{equation}
Indeed, suppose that $f^\ell(x)\in K_{\delta,M}$. Then for some $1\le k\le M$ the point $f^{\ell+k}(x)$ belongs to $H_{\delta,M}$.  Hence, for every
$1\le m\le\min\{M,\ell+k\}$,
\[
 \sum_{q=\ell+k-m}^{\ell+k-1}\phi(f^q x)
 =\sum_{r=1}^{m}\phi(f^{-r}f^{\ell+k} (x))>m\delta.
\]
Thus \([\max\{0,\ell+k-M\},\ell+k)\) is one of the intervals
defining \(W_{\delta,M}^{\Phi_x}\). Since \(1\le k\le M\), it
contains \(\ell\), proving \eqref{kfin}.
Consequently, for every \(j\),
$$
 \frac1{n_j}
 \#\bigl(W_{\delta,M}^{\Phi_x}\cap[0,n_j)\bigr)
 \ge
 \mu_{n_j}^x(K_{\delta,M}).
$$
Taking the lower limit and using
\eqref{eq:OM-portmanteau} proves the result.
\end{proof}

We now apply these estimates to the \(F\)-direction.

\begin{proof}[Proof of Proposition~\ref{lem:subsequence-mild-carrier}]
Choose \(a\in\mathbb Q_{>0}\) such that
\[
 B:=\{y:\lambda_F^+(y)\le-2a\},
 \qquad m:=\mu(B)>0.
\]
Write \(\mu=\int\nu\,d\tau_\mu(\nu)\). Since \(B\) is invariant, we have either $\nu(B)=1$ or $\nu(B)=0$ for $\tau_{\mu}$-almost every $\nu$. Thus,
$$
\tau_{\mu}(\Omega_B)=\int \nu(B)d\tau_{\mu}(\nu)=\mu(B)=m,
$$
where $\Omega_B=\{\nu: \nu(B)=1\}=\{\nu: \lambda_F^+(\nu)\le -2a\}$. 
For each \(\nu\in\Omega_B\), Kingman's theorem gives
\[
 \lim_{\ell\to\infty}\int\psi_\ell^F\,d\nu
 =-\lambda_F^+(\nu)\ge2a.
\]
By Egorov's theorem, we can choose $\ell\ge 1$ so that the set 
$
\Omega'=\{\nu\in \Omega_B: \int \psi_\ell^Fd\nu>a\}
$
satisfies $\tau_{\mu}(\Omega')\ge m/2$.
By Birkhoff's ergodic theorem,
\[
 \mu\{y:(\psi_\ell^F)^*(y)>a\}
 \ge\int_{\Omega'}\nu\{y:(\psi_\ell^F)^*(y)>a\}
       \,d\tau_\mu(\nu)
 =\tau_\mu(\Omega')\ge\frac m2.
\]

Extend \(\psi_\ell^F\) continuously to \(\mathbf{M}\) and choose
\(\eta\in\mathbb Q\cap(0,a)\).
For \(\phi=\psi_\ell^F\), the set \(W_{\eta,M}^{\Phi_x}\)
in Lemma~\ref{lem:quantitative-weak-time} is precisely
\(\mathcal I_x\). Applying that lemma with
\(\delta=\eta\) and \(m_0=m/2\), we obtain \(M_1\ge1\)
such that, for every fixed \(M\ge M_1\),
\begin{equation}\label{eq:F-weak-carrier-density}
 \liminf_{j\to\infty}\frac1{n_j}
 \#\bigl(\mathcal I_x\cap[0,n_j)\bigr)\ge\frac m4.
\end{equation}
Lemma~\ref{lem:finite-window-mildification} then gives
\(\vartheta>0\) such that
\begin{equation}\label{gpr}
 \#\bigl(G_{\eta,\ell,M}^{F}(x)\cap[0,n_j)\bigr)
 \ge
 \vartheta\,\#\bigl(\mathcal I_x\cap[0,n_j)\bigr)
 -\left(2+\frac{n_j}{M}+M\right).
\end{equation}
Choose an integer $M_0\ge \max\{M_1, 8/(\vartheta m)\}$ and set $\gamma:=\vartheta m/8$.
Dividing \eqref{gpr} by \(n_j\) and using
\eqref{eq:F-weak-carrier-density}, we obtain, for every fixed
\(M\ge M_0\),
\[
 \liminf_{j\to\infty}\frac1{n_j}
 \#\bigl(G_{\eta,\ell,M}^{F}(x)\cap[0,n_j)\bigr)
 \ge\frac{\vartheta m}{4}-\frac1M\ge\gamma.
\]
\end{proof}

\section{Geometry of sliding suffix times}\label{sec:geometry}

This section develops the geometry associated with sliding suffix
times. To this end, we first construct suffix preballs and prove bounded distortion and dynamical density on disks tangent to the \(E\)-cone field. We then establish a Gibbs volume estimate on
unions of time intervals.

Throughout this section and the next, we assume in addition that
\(f\in\Diff^{1+\alpha}(\mathbf{M})\) for some \(\alpha>0\).
We say that a $C^1$ submanifold $D\subset U$ is tangent to the $E$-direction cone field $\cC_{\theta}^E$ if $\dim D=\dim E$, and $T_xD\subset \cC_{\theta}^E(x)$ for each point $x\in D$. Then all its iterates are tangent to $\cC_{\theta}^E$ as well, by the invariance property \eqref{invc}.

For a submanifold \(D\), let \(d_D\) denote its intrinsic distance and
\(B_D(x,r)=\{y\in D:d_D(x,y)<r\}\). Distances to \(\partial D\)
are measured intrinsically.

\subsection{Suffix preball geometry and dynamical density}

\subsubsection{Suffix preballs and bounded distortion}

Fix a sufficiently small exponential-chart radius \(\delta_0>0\)
such that \(\exp_x^{-1}\) is defined on \(B(x,\delta_0)\)
for every \(x\in \mathbf{M}\).

\begin{proposition}\label{prop:suffix-preball}
Fix \(0<\sigma<c\) and an integer \(L\ge1\). There exists an integer \(N_0=N_0(\sigma,c,L,\theta)\ge L\) such that for every \(N\ge N_0\), there exist constants \(K_N\ge1\) and \(\rho_N \in (0,\delta_0)\) with the following properties.
Let \(D\subset U\) be a \(C^1\) embedded disk tangent to \(\mathcal{C}_\theta^E\).

\begin{enumerate}[(i)]
    \item\label{bcp} For every \(x\in D\) and every \(b\in S_{c,L,N}^E(x)\), we have
    \[
    d_{f^{b-j}(D)}\bigl(f^{b-j}(x),f^{b-j}(y)\bigr)
    \le K_N e^{-\sigma j}\, d_{f^b(D)}\bigl(f^b(x),f^b(y)\bigr),\qquad 0\le j\le b,
    \]
    for any point \(y\in D\) satisfying
    \(
    d_{f^b(D)}\bigl(f^b(x),f^b(y)\bigr)\le \rho_N.
    \)
    \item\label{tls} For every \(r_*>0\), there exists an integer \(b_*=b_*(r_*,N)\ge N\) such that for every \(x\in D\) with \(d_D(x,\partial D)\ge r_*\) and every \(b\in S_{c,L,N}^E(x)\) with \(b\ge b_*\), we have
    \[
    d_{f^b(D)}\bigl(f^b x,\partial f^b(D)\bigr)>\rho_N.
    \]
\end{enumerate}
\end{proposition}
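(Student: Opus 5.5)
The plan is to obtain both items from Lemma~\ref{lem:robust-suffix-conorm}, applied backwards along the orbit segment $[b-j,b]$, by an inductive ``backward contraction of small balls'' argument. First fix $\sigma<\sigma'<c$. Apply Lemma~\ref{lem:robust-suffix-conorm} with $\sigma'$ in place of $\sigma$; this gives $N_0\ge L$ and $\delta_L$ such that, for $N\ge N_0$, $b\in S^E_{c,L,N}(x)$ and $N\le r\le b$, we have $A^E_r(z)\ge\sigma' r$ whenever the length-$r$ orbit of $z$ $\delta_L$-shadows $f^{b-r}x,\dots,f^bx$. For $r<N$ only the crude bound $A^E_r\ge -R_N:=-\max_{h<N}\sup(-A^E_h)$ is available. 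Since a disk tangent to $\cC^E_\theta$ maps tangent vectors into the cone, $A^E_r(z)$ bounds from below the expansion of $Df^r|_{T_z f^{b-r}D}$. Consequently, along any curve $\gamma\subset f^{b-r}(D)$ whose points all satisfy the shadowing condition,
\[
\operatorname{length}(f^r\gamma)\ge e^{\min_z A^E_r(z)}\operatorname{length}(\gamma).
\]
Set $K_N:=e^{R_N+\sigma N}$, which is large enough to absorb the first $N$ steps. Choose $\rho_N$ so small that $K_N\rho_N<\delta_L/2$ and so that $\rho_N\cdot\max\|Df^{-1}\|^{N}$ is small as well.

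For item~\ref{bcp}, fix $y$ with $d_{f^bD}(f^bx,f^by)\le\rho_N$, and let $\gamma_0\subset f^bD$ be a near-minimizing intrinsic curve from $f^bx$ to $f^by$. The claim to prove by induction on $j=0,1,\dots,b$ is that every point of $f^{-j}\gamma_0$ lies within $\delta_L$ of $f^{b-j}x$ and, more precisely, that $\operatorname{length}(f^{-j}\gamma_0)\le K_Ne^{-\sigma j}\operatorname{length}(\gamma_0)$.

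For $j<N$, the claim follows from the trivial bound $\|Df^{-1}\|^j$ together with the choice of $\rho_N$. For $j\ge N$, assume the claim up to $j-1$. The lengths are then at most $K_N\rho_N<\delta_L/2$. Hence every point $z'$ of $f^{-(j-1)}\gamma_0$ stays $\delta_L/2$-close to the reference orbit on the times $[b-j+1,b]$. One more backward step keeps $f^{-j}\gamma_0$ inside $B(f^{b-j}x,\delta_L)$, provided $\rho_N$ is small compared with $\delta_L/\max\|Df^{-1}\|$. This is a continuity step, handled by requiring the length bound with the larger constant before closing the induction. Lemma~\ref{lem:robust-suffix-conorm} with $r=j$ then gives $A^E_j\ge\sigma' j$ on $f^{-j}\gamma_0$. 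Therefore $\operatorname{length}(f^{-j}\gamma_0)\le e^{-\sigma' j}\operatorname{length}(\gamma_0)\le K_Ne^{-\sigma j}\operatorname{length}(\gamma_0)$, which closes the induction. The required bound on $d_{f^{b-j}D}$ follows by letting the curve tend to a minimizer.

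For item~\ref{tls}, I will argue by contradiction using the same machinery. Suppose $d_{f^bD}(f^bx,\partial f^bD)\le\rho_N$. Then there is a curve of length at most $\rho_N$ from $f^bx$ to the boundary, or one exiting $f^bD$. Pulling it back by the argument of item~\ref{bcp}, now with $j=b$, produces a curve in $D$ from $x$ to $\partial D$ of length at most $K_Ne^{-\sigma b}\rho_N$. Choosing $b_*$ so that this quantity is less than $r_*$ contradicts $d_D(x,\partial D)\ge r_*$.

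The main obstacle is the bootstrap in item~\ref{bcp}. Lemma~\ref{lem:robust-suffix-conorm} requires shadowing on the whole window $[b-r,b]$, while the shadowing is itself what we are trying to prove. The induction must therefore be set up so that, at stage $j$, closeness on the first $j-1$ backward steps is known and closeness at step $j$ comes from a one-step continuity bound. One must also check that the suffix estimate is applied with $r=j$ measured from $b$, which is exactly the form in which Lemma~\ref{lem:robust-suffix-conorm} is stated.
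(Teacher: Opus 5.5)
Your proposal is correct and follows essentially the same route as the paper: you pull back a short curve from $f^b(D)$ and absorb the first $N$ backward steps into $K_N$. For $N\le r\le b$ you run the induction in which the length bounds at earlier stages supply the $\delta_L$-shadowing needed to invoke Lemma~\ref{lem:robust-suffix-conorm} at stage $r$, and (ii) follows by pulling back a short curve that reaches the boundary.

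The differences are cosmetic. The paper applies the lemma with $\sigma$ itself, so your auxiliary $\sigma'$ is unnecessary. It also carries the sharper bound $\ell(\gamma_r)\le e^{-\sigma r}\ell(\gamma)$ for $r\ge N$ through the induction, which makes the one-step continuity estimate immediate from $K_N\ge e^{\sigma}\|Df^{-1}\|_\infty$. Finally, your $K_N=e^{R_N+\sigma N}$ controls the first $N$ steps through the crude cone bound $A^E_j\ge -R_N$, not through $\|Df^{-1}\|^j$ as your text suggests.
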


\begin{proof}
Take \(N_0\) and \(\delta_L\in (0,1)\) from Lemma~\ref{lem:robust-suffix-conorm}. Fix \(N\ge N_0\), and choose \(K_N\ge1\) and \(\rho_N\in(0,\delta_0)\) such that
\begin{equation}\label{fkv}
    K_N\ge \max_{0\le j\le N}\bigl\{e^{\sigma j}\|Df^{-j}\|_\infty\bigr\},
    \qquad
    2K_N\rho_N<\delta_L.
\end{equation}

We first prove \eqref{bcp}. Fix \(x\in D\) and \(b\in S_{c,L,N}^E(x)\). Given \(y\in D\) with \(d_{f^b(D)}(f^b(x),f^b(y))\le \rho_N\), let \(\gamma\) be a length-minimizing \(C^1\) curve in \(f^b(D)\) connecting \(f^b(x)\) and \(f^b(y)\). For \(0\le r\le b\), put \(\gamma_r=f^{-r}\gamma\). It suffices to show
\begin{equation}\label{eq:terminalpullback}
    \ell(\gamma_r)\le K_N e^{-\sigma r}\ell(\gamma),\qquad 0\le r\le b.
\end{equation}
By the choice of \(K_N\),
\begin{equation}\label{shortes}
    \ell(\gamma_r)\le \|Df^{-r}\|_\infty\cdot \ell(\gamma)
    \le K_N e^{-\sigma r}\ell(\gamma)<\delta_L/2,\qquad 0\le r\le N.
\end{equation}
We prove by induction that
\begin{equation}\label{eq:long-curve-pullback}
    \ell(\gamma_r)\le e^{-\sigma r}\ell(\gamma),\qquad N\le r\le b.
\end{equation}
For \(r=N\), as a direct consequence of \eqref{shortes}, we have
\[
    d\bigl(f^j w,f^{b-N+j}x\bigr)\le \ell(\gamma_{N-j})<\delta_L/2,\qquad 0\le j\le N,\ w\in\gamma_N.
\]
Applying Lemma~\ref{lem:robust-suffix-conorm} to \(w\), we obtain \(A_N^E(w)\ge \sigma N\). Since the iterates of \(D\) are tangent to the cone field \(\mathcal{C}_\theta^E\),
\[
    m\bigl(Df_w^N|_{T_w f^{b-N}(D)}\bigr)\ge e^{\sigma N},\qquad w\in\gamma_N.
\]
Consequently, \(\ell(\gamma_N)\le e^{-\sigma N}\ell(\gamma)\). Suppose \eqref{eq:long-curve-pullback} holds up to \(r-1\), where \(r>N\). Then
\[
    \ell(\gamma_r)\le \|Df^{-1}\|_\infty e^{-\sigma(r-1)}\ell(\gamma)
    \le K_N e^{-\sigma r}\ell(\gamma)<\delta_L/2.
\]
Thus \(\gamma_0,\ldots,\gamma_r\) all lie in the corresponding \(\delta_L/2\)-balls. Using Lemma~\ref{lem:robust-suffix-conorm} again, we obtain \(\ell(\gamma_r)\le e^{-\sigma r}\ell(\gamma)\), which completes the induction, proving \eqref{eq:long-curve-pullback}. Together with \eqref{shortes}, this gives \eqref{eq:terminalpullback}.

We now prove \eqref{tls}. Given \(r_*>0\), choose \(b_*\ge N\) so that \(\rho_N e^{-\sigma b_*}<r_*\). Suppose that \(d_D(x,\partial D)\ge r_*\), \(b\in S_{c,L,N}^E(x)\), and \(b\ge b_*\), but
\[
    d_{f^b(D)}\bigl(f^b x,\partial f^b(D)\bigr)\le\rho_N.
\]
Choose a \(C^1\) curve \(\gamma:[0,1]\to\overline{f^b(D)}\) such that
\[
    \gamma(0)=f^b(x),\quad \gamma(1)\in\partial f^b(D),\quad \gamma([0,1))\subset f^b(D),\quad \ell(\gamma)\le\rho_N.
\]
For every \(0<t<1\), the long-curve estimate \eqref{eq:long-curve-pullback} applies to \(\gamma|_{[0,t]}\), since \(b\ge N\), giving
\[
    \ell\bigl(f^{-b}(\gamma|_{[0,t]})\bigr)\le e^{-\sigma b}\ell(\gamma|_{[0,t]}).
\]
Letting \(t\uparrow1\) and using \(f^{-b}(\gamma(1))\in\partial D\), we obtain
\[
    d_D(x,\partial D)\le \ell(f^{-b}(\gamma))\le e^{-\sigma b}\ell(\gamma)
    \le\rho_N e^{-\sigma b}\le \rho_N e^{-\sigma b_*}<r_*,
\]
a contradiction to our assumption. This proves \eqref{tls}.
\end{proof}

We henceforth fix \(\sigma=c/2\) and discuss bounded distortion
at sliding suffix times.
Write \(d=\dim E\), and let \(d_{\Gr}\) denote the distance
induced by a fixed smooth
Riemannian metric on \(\Gr_d(T\mathbf{M})\).
For \(d\)-planes \(P,Q\subset T_x\mathbf{M}\), we write
\[
 d_{\Gr}(P,Q):=d_{\Gr}\bigl((x,P),(x,Q)\bigr).
\]
For a \(C^1\) disk \(D\) of dimension \(d\)
and \(\eta>0\), define
\[
 \kappa_{\eta}(D)
 :=
 \sup_{\substack{x,y\in D\\0<d_D(x,y)<\delta_0}}
 \frac{d_{\Gr}\bigl(T_xD,\ D_y(\exp_x^{-1})(T_yD)\bigr)}{d_D(x,y)^{\eta}},
\]
where both planes in the numerator are viewed as subspaces
of \(T_x\mathbf{M}\).

The following follows from
\cite[Proposition~2.2, Remark~2.3 and Corollary~2.4]{ABV}.

\begin{lemma}\label{tangent-control}
There exists \(0<\alpha_*\le\alpha\) such that, for every
\(C^1\) disk \(D\subset U\) tangent to \(\cC_\theta^E\) with
\(\kappa_{\alpha_*}(D)<\infty\), we have
\[
 L_D:=\sup_{n\ge0}\kappa_{\alpha_*}(f^nD)<\infty.
\]
\end{lemma}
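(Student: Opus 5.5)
The plan is the classical graph-transform argument of \cite[Proposition~2.2]{ABV}, carried out in exponential charts and using only the domination and cone invariance \eqref{invc}. First, fix $\delta_1\le\delta_0$ small. For $x\in\overline U$ and a $d$-disk $D$ tangent to $\cC_\theta^E$, the piece $\exp_x^{-1}(D\cap B(x,\delta_1))$ is the graph of a $C^1$ map $\phi\colon \widetilde E(x)\supset V\to\widetilde F(x)$ with $\|D\phi\|\le\theta'$. Here $\theta'$ depends only on $\theta$, because the chart derivative is close to the identity and the extended splitting is continuous. In these coordinates $d_{\Gr}(T_xD,D_y(\exp_x^{-1})T_yD)$ is comparable to $\|D\phi(u)-D\phi(u')\|$, and $d_D(x,y)$ is comparable to $\|u-u'\|$, with constants uniform in $x$. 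So it suffices to control the H\"older seminorm $\mathrm{H}_{\alpha_*}(D\phi)$ of the graph maps uniformly in $x$ and $n$. This makes $\kappa_{\alpha_*}(f^nD)$ comparable to $\sup_x \mathrm{H}_{\alpha_*}(D\phi^{(n)}_x)$, up to an additive term from chart curvature. That term is bounded because $\exp$ is smooth and the relevant distances are at most $\delta_0$.

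Second, write $f$ in the charts as $\hat f_x=\exp_{fx}^{-1}\circ f\circ\exp_x$, decomposed as $(A,B;C,K)$ relative to $\widetilde E\oplus\widetilde F$. By domination with an adapted metric (shrinking $U$ and $\theta$), there is $\lambda<1$ such that along $\overline U$ we have $\|K\|\,m(A)^{-1}\le\lambda$ up to small off-diagonal terms. Also $D\hat f_x$ is $\alpha$-H\"older with uniform constant, since $f\in C^{1+\alpha}$. The graph transform sends $\phi$ to $\phi_1$ with
\[
D\phi_1(u_1)=\bigl(C+K D\phi(u)\bigr)\bigl(A+B D\phi(u)\bigr)^{-1}\Big|_{\text{at the point }u}.
\]
Differentiating this in the H\"older sense gives the one-step inequality
\[
\mathrm{H}_{\alpha_*}(D\phi_1)\le \lambda\, m(A)^{-\alpha_*}\!\cdot\!\mathrm{H}_{\alpha_*}(D\phi)\cdot(\text{contraction factor})+C_0 .
\]
Here $C_0$ depends only on $\|f\|_{C^{1+\alpha}}$, $\theta$ and $\delta_1$. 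The factor $m(A)^{-\alpha_*}$ appears because distances on $f(D)$ are at least $m(A)$ times the preimage distances, to within small cone errors.

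Third, choose $\alpha_*\le\alpha$ small enough that $\lambda\,\sup m(A|_{\text{cone}})^{-\alpha_*}\le\lambda_*<1$. This is possible because $m(Df|_{\cC_\theta^E})$ is bounded below on $\overline U$ by a positive constant; no expansion along $E$ is needed. The recursion $h_{n+1}\le\lambda_*h_n+C_0$ then yields
\[
\sup_n h_n\le \max\{h_0,\,C_0/(1-\lambda_*)\},
\]
and $h_0<\infty$ is exactly the hypothesis $\kappa_{\alpha_*}(D)<\infty$.

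The main obstacle is the localization. The chart at $f^n x$ sees only the image of a chart-sized piece of $f^{n-1}D$, while $f$ may contract $E$, so image points at distance $<\delta_0$ could come from preimages far apart. To handle this, I would define $\kappa$ on the smaller scale $\delta_1$ and treat pairs with $\delta_1\le d_D<\delta_0$ trivially: for them the numerator is bounded by $\operatorname{diam}\Gr$, so the ratio is at most $\operatorname{diam}\Gr\,\delta_1^{-\alpha_*}$. For pairs at distance $<\delta_1$ in $f^nD$, the preimages lie within $\|Df^{-1}\|\delta_1\le\delta_0$ and the one-step estimate applies. This is the argument of \cite[Remark~2.3, Corollary~2.4]{ABV}, which I would follow.
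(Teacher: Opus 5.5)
Your proposal follows the same route as the paper. The paper gives no argument of its own and simply invokes the H\"older control of tangent directions in \cite[Proposition~2.2, Remark~2.3]{ABV}, which is exactly your mechanism: one-step contraction of cone planes from domination, an error of order $d^{\alpha}$ from $f\in C^{1+\alpha}$, and $\alpha_*$ chosen so small that $\lambda\,m_*^{-\alpha_*}<1$, where $m_*>0$ is a lower bound for the conorm of $Df$ on cone planes (as you say, no expansion along $E$ is needed). This gives an affine recursion for the H\"older constants.

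One step needs repair. The set $\exp_x^{-1}(f^nD\cap B(x,\delta_1))$ need not be a single graph. When $f^nD$ grows, it returns to small balls in many sheets. Moreover, for a general cone-tangent disk, intrinsic balls that approach the boundary need not project injectively; compare the boundary-distance hypothesis in Lemma~\ref{lem:local-graphs}. Consequently $d_D$ is not comparable to $\|u-u'\|$ in general.

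The fix is to run the recursion pairwise, exactly as $\kappa_{\alpha_*}$ is defined. Write $T_yD$ and $D_z(\exp_y^{-1})(T_zD)$ as graphs of linear maps over $\widetilde E(y)$, and apply your block formula to these two linear maps. Use $d_{fD}(fy,fz)\ge m_*\,d_D(y,z)$, which follows by pulling back a curve in $fD$. The preimage pair may lie at intrinsic distance up to $\|Df^{-1}\|\delta_1$, so the recursion becomes $h_{n+1}\le\lambda_*\max\{h_n,C'\}+C_0$, where $C'$ is the trivial bound at scales $\ge\delta_1$. This recursion is still uniformly bounded.
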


Fix \(\alpha_*\) as in Lemma~\ref{tangent-control}.
A \(C^1\) disk \(D\subset U\) tangent to \(\cC_\theta^E\)
is called an \emph{admissible \(E\)-disk} if
\(\kappa_{\alpha_*}(D)<\infty\).
In particular, every \(C^2\) embedded closed disk \(D\subset U\)
tangent to \(\cC_\theta^E\) is admissible.

As a consequence of Proposition \ref{prop:suffix-preball}, we have the following bounded distortion property.

\begin{corollary}\label{prop:bounded-distortion}
Let $c>0$ and $L\in \N$, and let \(D\subset U\)
be an admissible \(E\)-disk. Let $N_0$ be as given by Proposition \ref{prop:suffix-preball}, applied with \(\sigma=c/2\).  Then there exists a constant $\mathscr L=\mathscr L(c,D)\ge1$ such that, for every $N\ge N_0$,  every $x\in D$, every $b\in S^E_{c,L,N}(x)$ and every $0\le a <b$,
$$
 \frac{1}{\mathscr L}\le \frac{\left|\Jac (Df^{b-a}|_{T_{f^ay}f^aD})\right|}{\left|\Jac( Df^{b-a}|_{T_{f^ax}f^aD})\right|}\le \mathscr L
$$
for any $y\in D$ satisfying $d_{f^b(D)}(f^b(y),f^b(x))\le \rho_N$.
\end{corollary}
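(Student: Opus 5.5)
The plan is the standard telescoping distortion argument, with the geometric input supplied by Proposition~\ref{prop:suffix-preball}\eqref{bcp} and the Hölder control of tangent planes supplied by Lemma~\ref{tangent-control}. Write, for $z\in D$ and $0\le a<b$,
\[
\log\left|\Jac\bigl(Df^{b-a}|_{T_{f^az}f^aD}\bigr)\right|
=\sum_{i=a}^{b-1}\log\left|\Jac\bigl(Df|_{T_{f^iz}f^iD}\bigr)\right|.
\]
It then suffices to bound
\[
\sum_{i=a}^{b-1}\Bigl|\log|\Jac(Df|_{T_{f^iy}f^iD})|-\log|\Jac(Df|_{T_{f^ix}f^iD})|\Bigr|
\]
by a constant independent of $N,x,b,a$ (and of $y$ in the allowed range). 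We may discard $a$ entirely: the partial sum over $[a,b)$ is dominated by the full sum over $[0,b)$, so a bound for $a=0$ suffices.

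The first ingredient is a regularity estimate for the one-step Jacobian. Since $f$ is $C^{1+\alpha}$, the map $(z,P)\mapsto\log|\Jac(Df_z|_P)|$ is $\alpha$-Hölder on the compact set of pairs with $z\in\overline U$ and $P$ a $d$-plane in $\cC_\theta^E(z)$. Hence each term is bounded by
\[
C_0\Bigl(d(f^ix,f^iy)^{\alpha}+d_{\Gr}\bigl(T_{f^ix}f^iD,\,D\exp^{-1}_{f^ix}(T_{f^iy}f^iD)\bigr)^{\alpha}\Bigr),
\]
where some care is needed to compare planes at different base points through the exponential chart. This comparison is harmless because $d_{f^iD}(f^ix,f^iy)<\delta_0$. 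By Lemma~\ref{tangent-control}, the Grassmannian term is at most $L_D\, d_{f^iD}(f^ix,f^iy)^{\alpha_*}$. Because $\alpha_*\le\alpha$ and the distances involved are less than $1$, each term is at most $C_1(1+L_D^\alpha)\,d_{f^iD}(f^ix,f^iy)^{\alpha_*\alpha}$.

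The second ingredient is backward contraction. Apply Proposition~\ref{prop:suffix-preball}\eqref{bcp} with $\sigma=c/2$ and $j=b-i$:
\[
d_{f^iD}(f^ix,f^iy)\le K_Ne^{-c(b-i)/2}\rho_N .
\]
Summing the resulting geometric series gives the bound $C_1(1+L_D^\alpha)(K_N\rho_N)^{\alpha\alpha_*}\bigl(1-e^{-c\alpha\alpha_*/2}\bigr)^{-1}$. The constraint \eqref{fkv} gives $K_N\rho_N<\delta_L/2<1$, so this bound is independent of $N$. It depends only on $c$ and on $D$ (through $L_D$ and the constant $C_1$ fixed by $f$ and $\theta$). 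Exponentiating yields $\mathscr L$.

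The main obstacle is exactly this uniformity in $N$. The factor $K_N$ grows with $N$, so the argument must use the product $K_N\rho_N$ rather than $K_N$ alone. That is, the statement relies on the specific normalization $2K_N\rho_N<\delta_L$ from the proof of Proposition~\ref{prop:suffix-preball}, and one should check that $\delta_L$ depends only on $c,L$ (and not on $N$); this is indeed the case by Lemma~\ref{lem:robust-suffix-conorm}. If $\mathscr L$ were allowed to depend on $L$ as well, this step would be even more routine. A minor technical point is the Hölder comparison of tangent planes at distinct base points, which I would handle in the exponential charts of radius $\delta_0$, as in \cite{ABV}.
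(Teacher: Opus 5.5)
Your proposal is correct and follows the paper's proof: you telescope the log-Jacobian, control each term by a uniform H\"older bound along the iterates $f^iD$, and sum the geometric series coming from Proposition~\ref{prop:suffix-preball}\eqref{bcp}. The normalization $2K_N\rho_N<1$ from \eqref{fkv} is exactly what makes $\mathscr L$ independent of $N$ and $L$, as you note. The only difference is that the paper quotes the $(H_*,\alpha_*)$-H\"older continuity of $\log|\Jac(Df|_{T f^iD})|$ directly from \cite[Corollary~2.4(c)]{ABV}, whereas you rebuild it from Lemma~\ref{tangent-control} and get the exponent $\alpha\alpha_*$ instead, which is harmless. Strictly speaking, \eqref{fkv} does not force $K_N\rho_N<\delta_0$, but terms at intrinsic distance at least $\delta_0$ are bounded by $2\sup|\log\Jac|$, so nothing changes.
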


\begin{proof}
Fix \(N,x,b,a,y\) as in the statement.
By \cite[Corollary~2.4(c)]{ABV}, there exists $H_{\ast}=H_{\ast}(D)>0$ such that, for every $i\ge 0$, the function
$$
f^iD\ni y\longmapsto \log |\Jac(Df|_{T_y f^iD})|
$$
is $(H_{\ast}, \alpha_\ast)$-H\"older continuous. Thus,
$$
\begin{aligned}
\left|\log \frac{\left|\Jac (Df^{b-a}|_{T_{f^ay}f^aD})\right|}{\left|\Jac( Df^{b-a}|_{T_{f^ax}f^aD})\right|}\right| &\le \sum_{i=a}^{b-1}\left|\log |\Jac(Df|_{T_{f^i(y)} f^iD})|-\log |\Jac(Df|_{T_{f^i(x)} f^iD})|\right|\\
&\le \sum_{i=a}^{b-1} H_{\ast}\left(d_{f^iD}(f^i(y),f^i(x))\right)^{\alpha_\ast}.
\end{aligned}
$$
On the other hand, by Proposition \ref{prop:suffix-preball} (i) we have
$$
d_{f^iD}\left(f^i(y),f^i(x)\right)\le K_N{\rm e}^{-\frac{1}{2}(b-i)c}d_{f^bD}(f^b(y),f^b(x))\le {\rm e}^{-\frac{1}{2}(b-i)c}, \qquad 0\le i \le b-1,
$$
where we use $2K_N\rho_N<1$ from \eqref{fkv}.
Consequently,
$$
\left|\log \frac{\left|\Jac (Df^{b-a}|_{T_{f^ay}f^aD})\right|}{\left|\Jac( Df^{b-a}|_{T_{f^ax}f^aD})\right|}\right| \le \frac{H_{\ast}}{1-{\rm e}^{-c\alpha_{\ast}/2}}.
$$
To complete the proof, it suffices to take $\mathscr L(c,D)=\exp \bigl(H_{\ast}/(1-{\rm e}^{-c\alpha_{\ast}/2})\bigr)$.

\end{proof}

\subsubsection{Dynamical density}

Pugh and Shub \cite{PS00} introduced dynamical density points
along strong unstable manifolds, extending the classical notion
of Lebesgue density points, see \cite{Xia06,CYZ,AlvesPinheiro2008,BurguetYang} for further developments.
Here we establish an analogous density theorem for dynamical balls generated by
sliding suffix times.

We choose $q\ge 1$ such that, for every \(C^1\) disk
\(\Sigma\) tangent to \(\cC_\theta^E\) and every \(w\in\Sigma\),
\begin{equation}\label{eq:local-graph-volume}
 q^{-1}r^{\dim E}
 \le \Leb_\Sigma\bigl(B_\Sigma(w,r)\bigr)
 \le qr^{\dim E},
 \qquad
 0<r<\min\{\delta_0,d_\Sigma(w,\partial\Sigma)\}.
\end{equation}
For \(c>0\), \(L\ge1\) and \(\sigma=c/2\), fix \(N\ge N_0\), \(K_N\) and \(\rho_N\) from
Proposition~\ref{prop:suffix-preball}. For $r_*>0$ and an admissible disk $D$, we consider the subset
\begin{equation*}
 D^\infty
 :=\{x\in D:d_D(x,\partial D)\ge r_*,\
 S_{c,L,N}^E(x)\text{ is unbounded}\}.
\end{equation*}
Let \(b_*\) be the threshold in
Proposition~\ref{prop:suffix-preball}(ii) for boundary margin \(r_*\)
and terminal radius \(\rho_N\).
Choose \(R_N>0\) so that
\[
 (1+2K_N)R_N<\rho_N.
\]
For \(x\in D^\infty\) and
\(b\in S_{c,L,N}^E(x)\) with \(b\ge b_*\), define
$$
 V_b^D(x)
 :=(f^b|_D)^{-1}\bigl(B_{f^bD}(f^bx,R_N)\bigr), \quad \widehat V_b^D(x)
 :=(f^b|_D)^{-1}\bigl(B_{f^bD}(f^bx,\rho_N)\bigr).
$$

\begin{proposition}
\label{thm:dynamical-density-new}
Let \(A\subset D^\infty\) be measurable with \(\Leb_D(A)>0\).
Then, for \(\Leb_D\)-almost every \(x\in A\),
\[
 \lim_{\substack{b\to\infty\\b\in S_{c,L,N}^E(x)}}
 \frac{\Leb_D(A\cap V_b^D(x))}{\Leb_D(V_b^D(x))}=1.
\]
Such a point \(x\) is called a \emph{suffix dynamical density point}
of \(A\) in \(D\).
\end{proposition}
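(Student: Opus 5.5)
We will run the Vitali-type covering argument used for Lebesgue density points. The role of metric balls is played by the suffix preballs \(V_b^D(x)\), with \(b\in S_{c,L,N}^E(x)\), \(b\ge b_*\), and each comes with its enlargement \(\widehat V_b^D(x)\). Two ingredients are needed. The first is a doubling property: \(\Leb_D(\widehat V_b^D(x))\le C\,\Leb_D(V_b^D(x))\) with \(C\) independent of \(x\) and \(b\). The second is an engulfing property: if \(V_b^D(x)\cap V_{b'}^D(x')\neq\varnothing\) with \(b\le b'\), then \(V_{b'}^D(x')\subset\widehat V_b^D(x)\).

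\emph{Doubling.} By Proposition~\ref{prop:suffix-preball}(ii), both balls \(B_{f^bD}(f^bx,R_N)\) and \(B_{f^bD}(f^bx,\rho_N)\) stay away from \(\partial f^bD\). Hence \eqref{eq:local-graph-volume} gives their leaf volumes up to the factor \(q^2(\rho_N/R_N)^{\dim E}\). We pull back by \(f^b|_D\) and use Corollary~\ref{prop:bounded-distortion} with \(a=0\); this is valid on all of \(\widehat V_b^D(x)\), since it is exactly the set where \(d_{f^bD}(f^by,f^bx)\le\rho_N\). The Jacobian varies there by at most \(\mathscr L\), so \(C=\mathscr L^2q^2(\rho_N/R_N)^{\dim E}\).

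\emph{Engulfing.} Take \(z\) in the intersection and \(y\in V_{b'}^D(x')\). We want to show \(d_{f^bD}(f^by,f^bx)<\rho_N\). Apply Proposition~\ref{prop:suffix-preball}(i) at the point \(x'\) with time \(b'\) and \(j=b'-b\). Since \(y,z\) lie in \(V_{b'}(x')\), this gives \(d_{f^bD}(f^by,f^bx')\le K_NR_N\) and \(d_{f^bD}(f^bz,f^bx')\le K_NR_N\), because \(e^{-\sigma j}\le1\). Moreover \(d_{f^bD}(f^bz,f^bx)<R_N\). The triangle inequality then gives \(d\le(1+2K_N)R_N<\rho_N\). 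This is exactly why \(R_N\) was chosen that way. I expect this to be the main obstacle: one must verify that the intrinsic distances along \(f^bD\) are the ones controlled, so that the curve in \(f^{b'}D\) pulls back to a curve in \(f^bD\). One must also handle the case where \(b\) and \(b'\) are measured from different base points; Proposition~\ref{prop:suffix-preball}(i) handles this because it is stated at the base point \(x'\). Note also that the diameter of \(V_b^D(x)\) is at most \(K_Ne^{-\sigma b}R_N\to0\) as \(b\to\infty\) along suffix times.

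\emph{Density argument.} Suppose the conclusion fails on a set of positive measure. Then for some \(\epsilon>0\) there is a compact \(B\subset A\) with \(\Leb_D(B)>0\) such that every \(x\in B\) has arbitrarily large \(b\in S_{c,L,N}^E(x)\) with \(\Leb_D(A\cap V_b^D(x))\le(1-\epsilon)\Leb_D(V_b^D(x))\). By outer regularity, choose an open set \(O\supset B\) with \(\Leb_D(O\setminus A)\) small compared to \(\epsilon\Leb_D(B)/C\). Because the diameters shrink, we may take the bad preballs around each \(x\in B\) inside \(O\). Now run the greedy Vitali selection, as in the proof of Lemma~\ref{lem:one-sided-discrete-maximal}: repeatedly pick the remaining preball with the smallest time \(b\), i.e.\ the largest one, and use engulfing. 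This yields a disjoint family \(\{V_{b_i}(x_i)\}\) whose enlargements \(\widehat V_{b_i}(x_i)\) cover \(B\); times are bounded below, so a minimum exists at each step after a countable reduction. Then \(\Leb_D(B)\le C\sum_i\Leb_D(V_{b_i}(x_i))\le C\epsilon^{-1}\sum_i\Leb_D(V_{b_i}(x_i)\setminus A)\le C\epsilon^{-1}\Leb_D(O\setminus A)\), which is a contradiction. The sets \(V_b^D(x)\) are open preimages, so measurability is no issue.
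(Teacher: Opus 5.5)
Your proposal is correct and follows the paper's proof essentially step by step: the same doubling constant $\mathscr L^2q^2(\rho_N/R_N)^{\dim E}$ from Corollary~\ref{prop:bounded-distortion} with $a=0$, the same engulfing estimate via $(1+2K_N)R_N<\rho_N$, and the same greedy Vitali selection by smallest suffix time inside an open $O$ with $\Leb_D(O\setminus A)$ small. The one step to tighten is the selection: since $B$ is compact and the preballs are open, pass to a finite subcover (as the paper does) so the greedy procedure trivially exhausts the family; with only a countable family you would also have to observe that only finitely many pairwise disjoint preballs with time below a given bound fit in $D$, since otherwise some preball might never be removed.
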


The main ingredients for the proof of Proposition \ref{thm:dynamical-density-new} are listed in the following lemma.

\begin{lemma}\label{three}
The following properties hold:
\begin{enumerate}[(i)]
\item\label{shrinking} For every \(x\in D^\infty\),
\(\Leb_D(\widehat V_b^D(x))\) tends to zero as $b\to\infty$ with $b\in S_{c,L,N}^E(x)$.
\item\label{density:volume} There exists \(C_N\ge1\) such that for any $x\in D^\infty$ and $b\in S_{c,L,N}^E(x)\cap [b_{\ast},+\infty)$,
$
 \Leb_D(\widehat V_b^D(x))\le C_N\Leb_D(V_b^D(x)).
$
\item\label{density:engulfing} If \(x_i\in D^\infty\), \(b_i\in S_{c,L,N}^E(x_i)\),
\(b_*\le b_1\le b_2\), and
\(V_{b_1}^D(x_1)\cap V_{b_2}^D(x_2)\ne\varnothing\), then
$$
 V_{b_2}^D(x_2)\subset\widehat V_{b_1}^D(x_1).
$$
\end{enumerate}
\end{lemma}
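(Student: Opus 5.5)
All three properties will be derived from Proposition~\ref{prop:suffix-preball}, Corollary~\ref{prop:bounded-distortion}, and the volume comparison \eqref{eq:local-graph-volume}. Throughout, the requirement $b\ge b_*$ and Proposition~\ref{prop:suffix-preball}(ii) guarantee that $B_{f^bD}(f^bx,\rho_N)$ stays away from $\partial f^bD$. Consequently $f^b|_D$ maps $\widehat V_b^D(x)$ diffeomorphically onto this ball, and \eqref{eq:local-graph-volume} applies to the balls of radii $R_N<\rho_N<\delta_0$.

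For (i), take $y\in\widehat V_b^D(x)$ and apply Proposition~\ref{prop:suffix-preball}(i) with $j=b$. This gives $d_D(x,y)\le K_Ne^{-\sigma b}\rho_N$, so $\widehat V_b^D(x)\subset B_D(x,K_N\rho_Ne^{-\sigma b})$. Strictly, we should bound the intrinsic distance on $D$; the pullback curve argument in that proof does exactly this. By \eqref{eq:local-graph-volume}, the volume is then at most $q(K_N\rho_N)^{\dim E}e^{-\sigma b\dim E}$, which tends to $0$ as $b\to\infty$ along $S_{c,L,N}^E(x)$.

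For (ii), use the change of variables $\Leb_D(\widehat V_b^D(x))=\int_{B(f^bx,\rho_N)}|\Jac(Df^{-b}|_{T f^bD})|\,d\Leb_{f^bD}$, and similarly for $V_b^D(x)$. Corollary~\ref{prop:bounded-distortion} with $a=0$ says the Jacobian of $f^b$ at any point of $\widehat V_b^D(x)$ is within a factor $\mathscr L$ of its value at $x$. Hence
\[
\Leb_D(\widehat V_b^D(x))\le \mathscr L^2\,\frac{\Leb_{f^bD}(B(f^bx,\rho_N))}{\Leb_{f^bD}(B(f^bx,R_N))}\,\Leb_D(V_b^D(x))\le \mathscr L^2q^2(\rho_N/R_N)^{\dim E}\Leb_D(V_b^D(x)),
\]
and we set $C_N$ equal to the right-hand constant.

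For (iii), the engulfing property, pick $z\in V_{b_1}^D(x_1)\cap V_{b_2}^D(x_2)$ and $y\in V_{b_2}^D(x_2)$. The goal is $d_{f^{b_1}D}(f^{b_1}y,f^{b_1}x_1)<\rho_N$. First, by the triangle inequality at time $b_2$, $d_{f^{b_2}D}(f^{b_2}y,f^{b_2}z)<2R_N\le\rho_N$. Apply Proposition~\ref{prop:suffix-preball}(i) at the point $x_2$ with $j=b_2-b_1$. This is legitimate because both $y$ and $z$ lie within $R_N\le\rho_N$ of $x_2$ at time $b_2$, and $x_2$ is the base point with $b_2\in S^E_{c,L,N}(x_2)$. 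It gives
\[
d_{f^{b_1}D}(f^{b_1}y,f^{b_1}x_2)\le K_NR_N,\qquad d_{f^{b_1}D}(f^{b_1}z,f^{b_1}x_2)\le K_NR_N.
\]
Combining these with $d(f^{b_1}z,f^{b_1}x_1)<R_N$ yields
\[
d_{f^{b_1}D}(f^{b_1}y,f^{b_1}x_1)<(1+2K_N)R_N<\rho_N,
\]
which is exactly why $R_N$ was chosen so. The main obstacle is the intrinsic-distance bookkeeping. The contraction estimate is stated for pairs $(x,y)$ at the base point, so we must route through $x_2$ rather than comparing $y$ and $z$ directly. We must also check that the pulled-back minimizing curves stay inside $f^{b_1}D$, which follows from the boundary margin supplied by Proposition~\ref{prop:suffix-preball}(ii).
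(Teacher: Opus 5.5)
Your proposal is correct and follows the paper's proof essentially step for step. You get (i) from the containment $\widehat V_b^D(x)\subset B_D(x,K_N\rho_Ne^{-\sigma b})$ given by Proposition~\ref{prop:suffix-preball}(i); (ii) from bounded distortion, change of variables and \eqref{eq:local-graph-volume}, with $C_N=\mathscr L^2q^2(\rho_N/R_N)^{\dim E}$; and (iii) by routing the triangle inequality through $x_2$ to obtain the bound $(1+2K_N)R_N<\rho_N$. Your preliminary estimate at time $b_2$ and your concern about pulled-back curves leaving $f^{b_1}D$ are unnecessary but harmless; the latter is automatic since $f^{b_1}D=f^{-(b_2-b_1)}f^{b_2}D$.
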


\begin{proof}
By Proposition~\ref{prop:suffix-preball}(i), we have
\(
 \widehat V_b^D(x)\subset B_D(x,K_N\rho_Ne^{-\sigma b}),
\)
proving \eqref{shrinking}. Now we show
\eqref{density:volume}.
By Corollary~\ref{prop:bounded-distortion} and change of variables,
\[
 \frac{\Leb_D(\widehat V_b^D(x))}{\Leb_D(V_b^D(x))}
 \le\mathscr L^2
 \frac{\Leb_{f^bD}(B_{f^bD}(f^bx,\rho_N))}
      {\Leb_{f^bD}(B_{f^bD}(f^bx,R_N))}
 \le\mathscr L^2q^2\left(\frac{\rho_N}{R_N}\right)^{\dim E}.
\]
This proves \ref{density:volume} with the last expression as \(C_N\).

For \eqref{density:engulfing}, take
\(w\in V_{b_1}^D(x_1)\cap V_{b_2}^D(x_2)\) and
\(y\in V_{b_2}^D(x_2)\).
Applying Proposition~\ref{prop:suffix-preball}(i) at \(x_2\)
to both \(y\) and \(w\), we obtain
\[
 \begin{aligned}
 d_{f^{b_1}D}(f^{b_1}y,f^{b_1}x_1)
 &\le d_{f^{b_1}D}(f^{b_1}y,f^{b_1}x_2)
      +d_{f^{b_1}D}(f^{b_1}x_2,f^{b_1}w)\\
 &\qquad+d_{f^{b_1}D}(f^{b_1}w,f^{b_1}x_1)\\
 &<\bigl(2K_Ne^{-\sigma(b_2-b_1)}+1\bigr)R_N
 <\rho_N.
 \end{aligned}
\]
Thus \(y\in\widehat V_{b_1}^D(x_1)\), proving
\eqref{density:engulfing}.
\end{proof}

Now we are ready to prove Proposition \ref{thm:dynamical-density-new}.

\begin{proof}[Proof of Proposition \ref{thm:dynamical-density-new}]
For any \(0<\eta<1\), we denote
\[
 A_\eta:=\left\{x\in A:
 \limsup_{\substack{b\to\infty\\b\in S_{c,L,N}^E(x)}}
 \frac{\Leb_D(V_b^D(x)\setminus A)}{\Leb_D(V_b^D(x))}
 >\eta\right\}.
\]
It suffices to show that \(\Leb_D(A_\eta)=0\).

Given \(\varepsilon>0\), choose an open set \(O\subset D\)
containing \(A\) with \(\Leb_D(O\setminus A)<\varepsilon\),
and let \(K\subset A_\eta\) be compact.
For each \(x\in K\), the definition of \(A_\eta\) and Proposition~\ref{prop:suffix-preball}(i) give a time \(b(x)\in S_{c,L,N}^E(x)\), with \(b(x)\ge b_*\), such that
\[
 V_{b(x)}^D(x)\subset O,
 \qquad
 \Leb_D(V_{b(x)}^D(x)\setminus A)
 >\eta\,\Leb_D(V_{b(x)}^D(x)).
\]
These open preballs cover \(K\). Choose a finite subcover
\[
 K\subset\bigcup_{j=1}^{s}V_j,
 \qquad
 V_j:=V_{b_j}^D(x_j),
 \qquad
 b_1\le\cdots\le b_s,
\]
and put \(\widehat V_j:=\widehat V_{b_j}^D(x_j)\).

Set \(\mathcal I_0=\{1,\ldots,s\}\). As long as
\(\mathcal I_{k-1}\ne\varnothing\), define
\[
 i_k:=\min\mathcal I_{k-1},
 \qquad
 \mathcal I_k
 :=
 \{j\in\mathcal I_{k-1}:V_j\cap V_{i_k}=\varnothing\}.
\]
Since \(i_k\notin\mathcal I_k\), the construction stops with
\(\mathcal I_m=\varnothing\) for some \(m\le s\).
By construction, we have
\begin{itemize}
\item $V_{i_k}\cap V_{i_\ell}=\varnothing$ whenever $k\ne \ell$;
\item $b_j\ge b_{i_k},\quad V_j\cap V_{i_k}\ne\varnothing$ for every \(j\in\mathcal I_{k-1}\setminus\mathcal I_k\).
\end{itemize}
Lemma \ref{three} (iii) therefore gives
\[
 V_j\subset\widehat V_{i_k},
 \qquad
 \forall\, j\in\mathcal I_{k-1}\setminus\mathcal I_k.
\]
Consequently,
\[
 K
 \subset\bigcup_{j=1}^{s}V_j
 =
 \bigcup_{k=1}^{m}
 \ \bigcup_{j\in\mathcal I_{k-1}\setminus\mathcal I_k}V_j
 \subset\bigcup_{k=1}^{m}\widehat V_{i_k}.
\]
Using Lemma \ref{three} (ii), the choice of the
\(V_j\), and the disjointness of the selected preballs, we obtain
\[
 \begin{aligned}
 \Leb_D(K)
 &\le\sum_{k=1}^{m}\Leb_D(\widehat V_{i_k})
 \le C_N\sum_{k=1}^{m}\Leb_D(V_{i_k})\\
 &<\frac{C_N}{\eta}
       \sum_{k=1}^{m}\Leb_D(V_{i_k}\setminus A)\\
 &=\frac{C_N}{\eta}
       \Leb_D\left(
       \left(\bigcup_{k=1}^{m}V_{i_k}\right)\setminus A
       \right)\\
 &\le\frac{C_N}{\eta}\Leb_D(O\setminus A)
 <\frac{C_N\varepsilon}{\eta}.
 \end{aligned}
\]
Letting \(\varepsilon\downarrow0\) gives \(\Leb_D(K)=0\).
Since \(K\subset A_\eta\) was arbitrary, inner regularity implies
\(\Leb_D(A_\eta)=0\). The proof is thus complete.
\end{proof}

\subsection{Multi-component Gibbs estimates}

We prove a Gibbs estimate on finite unions of time intervals
whose nonzero endpoints are sliding suffix times.
For a finite set \(Q\subset\mathbb Z_{\ge0}\), write its connected
components as \([a_i,b_i)_{\mathbb Z}\), and set
\[
 \partial Q:=\bigcup_i\{a_i,b_i\}.
\]

\begin{theorem}
\label{thm:multi-gibbs}
Fix \(c>0\), an integer \(L\ge1\), an admissible \(E\)-disk \(D\) of finite volume, and
\(r_*>0\). Let \(N\ge N_0\) be an integer, where \(N_0\) is given by
Proposition~\ref{prop:suffix-preball} with \(\sigma=c/2\).
For every \(\varepsilon>0\), there exist \(\delta>0\) and \(C\ge1\)
with the following property.

Let \(Q\subset\mathbb Z_{\ge0}\) be a nonempty finite set, and let
$$
\Gamma=\left\{y\in D: d_D(y,\partial D)\ge r_*,\quad
 \partial Q\setminus\{0\}\subset S_{c,L,N}^E(y)\right\}.
$$
Then, for every finite Borel partition \(\cP\) with
\(\diam\cP<\delta\) and every \(x\in D\),
\begin{equation*}
 \Leb_D\bigl(\Gamma\cap\cP^Q(x)\bigr)
 \le C^{\#\partial Q}
       \exp\left(\varepsilon\#Q-\sum_{j\in Q}\log\Jac\bigl(Df|_{T_{f^jx}f^jD}\bigr)\right).
\end{equation*}
\end{theorem}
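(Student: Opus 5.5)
We will prove the estimate by an induction over the connected components of \(Q\), processing them from right to left. Each component is handled by a single-interval Gibbs estimate built from Proposition~\ref{prop:suffix-preball} and Corollary~\ref{prop:bounded-distortion}.

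Write the components as \([a_1,b_1)_{\mathbb Z},\dots,[a_p,b_p)_{\mathbb Z}\) with \(b_i\le a_{i+1}\), so that \(\#\partial Q\) is comparable to \(p\). First choose \(\delta\ll\rho_N\) (with \(R_N\) as in the density subsection) small enough that any \(\cP\)-atom meeting an iterate \(f^jD\) near a point of \(\Gamma\) lies in a \(\delta\)-ball. Also fix a cutoff \(T\) such that \(\log\|Df\|_\infty\) times the number of indices in \([a_i,b_i)\) at which the preball is not yet contracted is at most \(\varepsilon/2\) per index, up to an additive constant. The constant \(C\) will absorb \(\mathscr L\), the constant \(q\) of \eqref{eq:local-graph-volume}, a covering number, and \(e^{\varepsilon T}\).

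\emph{Single-interval step.} Let \(y\in\Gamma\cap\cP^Q(x)\) and \(b=b_i\in S^E_{c,L,N}(y)\). For \(j\in[a_i,b_i)\), the points \(f^jy\) and \(f^jx\) lie in a common atom of diameter less than \(\delta\). We want to upgrade this to intrinsic closeness \(d_{f^{b}D}(f^{b}y',f^{b}y)\le\rho_N\) for all \(y'\in\Gamma\cap\cP^Q(x)\), restricted to a suitable local piece. The tool is the backward contraction of Proposition~\ref{prop:suffix-preball}(i), together with the fact that the atoms at the terminal time \(b-1\) are small.

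Concretely, cover \(f^{b_i}(\Gamma\cap\cP^Q(x))\) by a bounded number (depending only on \(\delta,\rho_N\)) of intrinsic \(\rho_N\)-balls centered at points of the set. Here Proposition~\ref{prop:suffix-preball}(ii) guarantees that these balls are full, that is, they do not meet the boundary. On each preimage \(\widehat V^{D}_{b_i}(y)\), restricted to \(f^{a_i}D\), Corollary~\ref{prop:bounded-distortion} gives
\[
\Leb_{f^{a_i}D}\bigl(f^{a_i}\widehat V\bigr)\le \mathscr L\,q\rho_N^{\dim E}\,\bigl|\Jac(Df^{b_i-a_i}|_{T_{f^{a_i}y}f^{a_i}D})\bigr|^{-1}.
\]
We then replace \(y\) by \(x\) in the Jacobian. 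Atom-closeness only gives \(d(f^jy,f^jx)<\delta\), so uniform continuity of \(\log\Jac\) on the Grassmannian bundle (using Lemma~\ref{tangent-control} to control tangent planes) costs \(e^{\varepsilon(b_i-a_i)}\). This is where the choice of \(\delta\) depending on \(\varepsilon\) enters.

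\emph{Gluing.} Between components, the map \(f^{a_{i+1}-b_i}\) distorts volume by an amount that appears neither on the Jacobian side nor in \(\#Q\). To deal with this, we do not push measures forward across gaps. Instead, we localize: we condition on the piece of \(f^{b_i}D\) selected at step \(i\) and estimate the relative measure of the sub-piece selected at step \(i+1\). The relevant quantity is the ratio
\[
\frac{\Leb_{f^{b_i}D}(\text{piece}_{i+1})}{\Leb_{f^{b_i}D}(\text{piece}_i)},
\]
and this ratio is computed through \(f^{a_{i+1}-b_i}\). Bounded distortion at the sliding suffix time \(b_{i+1}\) (Corollary~\ref{prop:bounded-distortion}, applied with base time \(a=b_i\)) controls the gap Jacobians, but only as a ratio between the two endpoints of the gap. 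The resulting telescoping product then leaves exactly the Jacobians over \(Q\), plus a factor \(C\) per boundary point, with one leftover term \(\Leb_D(D)\) at the start. That leftover is finite because \(D\) has finite volume.

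\emph{Main obstacle.} The step I expect to be hardest is this gluing across gaps. The difficulty is ensuring that the total volume along the gap cancels rather than accumulates, and that the number of \(\rho_N\)-pieces chosen at each boundary is bounded uniformly, independently of the gap length.

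My first attempt would be a nested-preball argument. Since \(b_{i+1}\ge b_i\) are both sliding suffix times for every \(y\in\Gamma\), the engulfing property Lemma~\ref{three}(iii) shows that preballs at time \(b_{i+1}\) sit inside preballs at time \(b_i\). Disjointification, as in the proof of Proposition~\ref{thm:dynamical-density-new}, then bounds the multiplicity. The fact that \(a_i\) need not itself be a sliding suffix time is handled because only the right endpoints \(b_i\) are used as contraction anchors; the left endpoint \(a_i\) enters only through the base time in Corollary~\ref{prop:bounded-distortion}.
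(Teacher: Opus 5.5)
\textbf{There is a genuine gap in the single-interval step.} It rests on the claim that \(f^{b_i}(\Gamma\cap\cP^Q(x))\) can be covered by a number of intrinsic \(\rho_N\)-balls bounded in terms of \(\delta,\rho_N\) alone, and this is false. Membership in common atoms gives only \emph{ambient} closeness, and the iterated disk can cross one \(\delta\)-atom along many disjoint plaques. Take a linear Anosov map of \(\mathbb T^2\) with \(E\) the unstable direction and \(Q=[0,N)\cup[a,a+N)\). Then \(f^{a+N}(\Gamma\cap\cP^Q(x))\) lies on a number of pairwise intrinsically distant plaques of \(f^{a+N}D\) that grows exponentially in \(a\), yet the theorem holds there with \(C\) independent of \(a\).

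Proposition~\ref{prop:suffix-preball}(i) cannot do the ``upgrade'' you want. It applies only to points already intrinsically \(\rho_N\)-close at time \(b_i\), which is exactly what is to be proved, so the argument is circular. The missing ingredient is the paper's Lemma~\ref{lem:component-geometry}: two points of \(\Gamma\) that are intrinsically \(2R_N\)-close at \(a_i\) and \(\delta_{\rm geo}\)-close in \(\mathbf M\) on \([a_i,b_i]\) stay intrinsically \(R_N\)-close on all of \([a_i,b_i]\). Its proof has three steps:
\begin{enumerate}
\item take the unique point \(p_b\) where \(B_{f^bD}(z_b,R_N)\) meets the transverse fiber \(\mathcal T_{y_b}\);
\item pull \(p_b\) back using the suffix contraction at \(b\);
\item push the full local graph \(B_{f^aD}(y_a,3R_N)\) forward with Lemma~\ref{lem:local-graphs}, which forces \(p_b=y_b\).
\end{enumerate}
This requires \emph{both} endpoints \(a_i\) and \(b_i\) to be sliding suffix times, so that these disks are full.

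\textbf{The gluing across gaps fails for the same reason.} The number of pieces at \(a_{i+1}\) inside one piece at \(b_i\) is genuinely unbounded. So the uniform bound on the number of pieces, which you name as the main obstacle, does not exist; what can and must be bounded is their total weight. The paper's Lemma~\ref{lem:gap-transfer} does this in three steps:
\begin{enumerate}
\item take a maximal disjoint family of intrinsic \(R_N\)-balls at time \(a_{i+1}\) centered in the set; they cover it by the case \(a=b\) of the component lemma;
\item pull these balls back to time \(b_i\); they are disjoint and lie in \(B_{f^{b_i}D}(f^{b_i}y_0,(1+K_N)R_N)\) by backward contraction from \(a_{i+1}\), so their total volume is bounded;
\item apply bounded distortion at \(a_{i+1}\) with base \(b_i\) to transfer densities across the gap.
\end{enumerate}
Localizing at \(a_{i+1}\) is also what produces the factor \(e^{-J_{I_{i+1}}(x)}\), because the localized set maps into an \(R_N\)-ball at \(b_{i+1}\). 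Anchoring the ratio only at \(b_{i+1}\) does not produce it, since the set may occupy a fixed proportion of a \(b_{i+1}\)-preball. All of this uses \(a_{i+1}\in S^E_{c,L,N}(y)\). The hypothesis grants this, since \(\partial Q\setminus\{0\}\) contains the left endpoints, but you explicitly discard it.

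\textbf{The Jacobian replacement step also needs a different tool.} Replacing \(y\) by \(x\) cannot rely on Lemma~\ref{tangent-control}. Since \(x\in D\) is arbitrary, \(f^jx\) and \(f^jy\) may lie on different plaques, and intrinsic H\"older control then says nothing about \(T_{f^jx}f^jD\) versus \(T_{f^jy}f^jD\). You need the domination argument of Lemma~\ref{lem:jacobian-comparison}: after \(m_\varepsilon\) iterates all cone planes are close, so \(\log\Jac\) is within \(\varepsilon\) of a continuous function of the point \(m_\varepsilon\) steps earlier. This costs \(C_\varepsilon e^{\varepsilon(b_i-a_i)}\) per component, which is absorbed into \(C^{\#\partial Q}\).
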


\subsubsection{Jacobian comparison and local graphs}

We first establish a Jacobian comparison for cone-tangent disks
and record the local graph properties used in the proof.

\paragraph{Jacobian comparison.}
For a \(C^1\) disk \(D\subset U\) tangent to \(\cC_\theta^E\), write
\[
 J_j(w)=\log\Jac\left(Df_w|_{T_wf^jD}\right),\quad w\in f^jD,
\]
and, for a finite time set \(Q\), put
\[
 J_Q(y):=\sum_{j\in Q}J_j(f^jy),\qquad y\in D.
\]
The following estimate is uniform over all such disks and
all time intervals.

\begin{lemma}
\label{lem:jacobian-comparison}
For every \(\varepsilon>0\), there exist
\(\delta_\varepsilon>0\) and \(C_\varepsilon\ge1\) such that,
for every $C^1$ disk \(D\subset U\) tangent to $\cC_{\theta}^E$, every interval
\(I=[a,b)_{\mathbb Z}\) with \(0\le a<b\), and every \(y,z\in D\), if
\(
 d(f^jy,f^jz)<\delta_\varepsilon
\)
for all $a\le j < b$,
then
$$
 |J_I(y)-J_I(z)|\le\varepsilon(b-a)+\log C_\varepsilon.
$$
\end{lemma}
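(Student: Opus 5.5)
The plan is to prove a per-step comparison and then control how errors accumulate along the interval. The per-step bound comes from uniform continuity of the Jacobian on the Grassmannian, restricted to the cone. The accumulation is handled by the fact that cone tangent spaces of nearby points become close after a bounded number of iterates.

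Consider the function
\[
\Phi(w,P):=\log\Jac(Df_w|_P)
\]
on the compact set $\mathcal K$ of pairs $(w,P)$ with $w\in\overline U$ and $P$ a $\dim E$-plane contained in $\cC_\theta^E(w)$. $\Phi$ is continuous on $\mathcal K$, hence uniformly continuous and bounded by some $K_0$. So there is $\beta>0$ such that $|\Phi(w,P)-\Phi(w',P')|<\varepsilon/2$ whenever $(w,P)$ and $(w',P')$ are $\beta$-close in $\Gr_d(T\mathbf M)$.

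The difficulty is that $T_{f^jy}f^jD$ and $T_{f^jz}f^jD$ need not be close even when $f^jy$ and $f^jz$ are close. This is the main obstacle, since no Hölder control of $D$ is assumed here; $D$ is only $C^1$. To handle it, I would use the cone contraction \eqref{invc} together with domination. Forward iteration of any two $d$-planes in the cone, based at nearby points, brings them together: this is the standard graph transform contraction for dominated cone fields. The precise claim is that there is $m=m(\beta)$ and $\delta'>0$ such that if $d(f^iu,f^iu')<\delta'$ for $0\le i\le m$, and $P\subset\cC_\theta^E(u)$, $P'\subset\cC_\theta^E(u')$, then $Df^m P$ and $Df^m P'$ are $\beta$-close.

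I would justify this by a compactness/continuity argument:
\begin{itemize}
\item On $\Lambda$, the cone images $Df^m\cC_\theta^E$ shrink uniformly to $E$ by domination.
\item Since $\Lambda$ attracts, the same holds on $f^{m'}(\overline U)$ near $\Lambda$, after shrinking $U$ if necessary (the extensions $\widetilde E$ are continuous).
\item Continuity of $Df^m$ then gives closeness of the images for nearby base points.
\end{itemize}
If the uniform cone narrowing only holds close to $\Lambda$, I would instead argue directly with the cone width: $Df^m\cC^E_\theta(u)\subset\cC^E_{\tau_0^m\theta}(f^mu)$. Both planes are then graphs over $\widetilde E$ of maps with norm at most $\tau_0^m\theta$. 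Given uniform continuity of $\widetilde E$ and $\widetilde F$, two such planes at $\delta'$-close points are $O(\tau_0^m\theta)+o_{\delta'}(1)$ close. This version needs only \eqref{invc} and is cleaner, so it is the one I would use.

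With these ingredients the estimate follows. Take $\delta_\varepsilon\le\min\{\delta',\beta/2\}$. Every tangent plane $T_{f^jy}f^jD$ lies in $\cC^E_{\tau_0^{j}\theta}$, and for $j\ge a+m$ it is the $Df^m$-image of a cone plane at time $j-m\ge a$. Hence for $a+m\le j<b$ the pairs $(f^jy,T_{f^jy}f^jD)$ and $(f^jz,T_{f^jz}f^jD)$ are $\beta$-close, and the corresponding terms of $J_I$ differ by less than $\varepsilon/2$. The first $m$ terms of $I$ are each bounded by $2K_0$. Summing,
\[
|J_I(y)-J_I(z)|\le \tfrac{\varepsilon}{2}(b-a)+2K_0m,
\]
which gives the lemma with $C_\varepsilon=e^{2K_0m}$. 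The constants are uniform in $D$ and $I$ because only cone tangency is used.
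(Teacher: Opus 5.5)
Your overall architecture matches the paper's. You pay $2m\sup|\Phi|$ for the first $m$ indices of $I$. For $a+m\le j<b$ you use that $T_{f^jy}f^jD=Df^m\bigl(T_{f^{j-m}y}f^{j-m}D\bigr)$ is the image of a cone plane, so it is essentially determined by the base point. The gap is in the justification you chose for the key closeness claim.

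The inclusion \eqref{invc} concerns the single width $\theta$. Iterating it only gives $Df^m_u\cC^E_\theta(u)\subset Df_{f^{m-1}u}\bigl(\cC^E_\theta(f^{m-1}u)\bigr)\subset\cC^E_{\tau_0\theta}(f^mu)$. So neither $Df^m_u\cC^E_\theta(u)\subset\cC^E_{\tau_0^m\theta}(f^mu)$ nor $T_{f^jy}f^jD\subset\cC^E_{\tau_0^j\theta}(f^jy)$ follows. Nor could the width-$a$ analogue of \eqref{invc} hold for every $a\le\theta$. At $a=0$ it would say $Df_x\widetilde E(x)=\widetilde E(fx)$ on $\overline U$, while the paper stresses that $\widetilde E$ need not be invariant off $\Lambda$. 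With the width stuck at $\tau_0\theta$, your ``cleaner'' version only gives $O(\theta)$-closeness of the two planes. That is not the $\beta$-closeness you need for small $\varepsilon$.

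The repair is essentially the option you set aside, but without shrinking $U$. $U$ is fixed before $\varepsilon$, so it may not depend on it.
\begin{enumerate}
\item Domination on $\Lambda$ gives $Df^k_x\cC^E_\theta(x)\subset\cC^E_{C\tau^k\theta}(f^kx)$ for $x\in\Lambda$.
\item By compactness and continuity, the width $2C\tau^k\theta$ holds on some neighborhood $W_k$ of $\Lambda$.
\item The compact sets $f^n(\overline U)$ decrease to $\Lambda$, so there is $m'$ with $f^{m'}(\overline U)\subset W_k$.
\end{enumerate}
Hence $Df^{m'+k}_u\cC^E_\theta(u)\subset\cC^E_{2C\tau^k\theta}(f^{m'+k}u)$ uniformly for $u\in\overline U$. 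With $m=m'+k$, your width argument goes through, using uniform continuity of $\widetilde E$ and closeness of base points only at time $j$.

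The paper packages the same mechanism slightly differently. It uses only that $Df^m_w$ shrinks the Grassmannian diameter of the cone at a single base point $w$. It then compares different base points through the continuous reference function $g_m(w)=\Psi\bigl(f^mw,Df^m_w\widetilde E(w)\bigr)$. Uniform continuity of $g_m$ supplies $\delta_\varepsilon$ from closeness at the single time $j-m$.
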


\begin{proof}
Set
\[
 \Psi(w,V):=\log\Jac(Df_w|_V),\quad w\in \overline{U},
 \qquad
 \gamma:=
 \max_{\substack{w\in\overline U\\
                  V\subset\cC_\theta^E(w),\ \dim V=d}}
 |\Psi(w,V)|.
\]
Fix \(\varepsilon>0\). By continuity of \(\Psi\) on the compact
Grassmann bundle \(\Gr_d(T\mathbf{M})|_{\overline U}\), there exists \(\eta>0\) such that
\[
 d_{\Gr}(V,W)<\eta
 \quad\Longrightarrow\quad
 |\Psi(w,V)-\Psi(w,W)|<\varepsilon/3
\]
for every \(w\in\overline U\) and all \(d\)-planes
\(V,W\subset T_w\mathbf{M}\). Since the $E$-direction cone field $\cC^E_{\theta}$ is contracted by domination, there exists \(m=m_\varepsilon\ge1\) such that for any $w\in \overline{U}$,
$
 d_{\Gr}(Df_w^mV,Df_w^mW)<\eta
$
whenever the $d$-planes $V, W$ are contained in $\cC^E_{\theta}(w)$. By the choice of $\eta$,
\begin{equation}\label{wvin}
\left|\Psi(f^m(w), Df_w^mV)-\Psi(f^m(w),Df_w^mW)\right|<\varepsilon/3
\end{equation}
for all $V, W \subset \cC^E_{\theta}(w)$ and $w\in \overline{U}$.

For this fixed \(m\), consider the function
\[
 g_m(w):=\Psi\bigl(f^mw,Df_w^m(\widetilde E(w))\bigr),
 \qquad w\in\overline U.
\]
Note that \(\widetilde E(w)\subset\cC_\theta^E(w)\), by taking $W=\widetilde E(w)$ in \eqref{wvin} we get
\begin{equation}\label{mwg}
 \bigl|\Psi(f^mw,Df_w^mV)-g_m(w)\bigr|<\varepsilon/3
\end{equation}
holds for every $d$-plane $V\subset\cC_\theta^E(w)$.
Since \(g_m\) is continuous on the compact set \(\overline U\),
there exists \(\delta_\varepsilon>0\) such that, for every
\(w,v\in\overline U\),
\begin{equation}\label{delt}
 d(w,v)<\delta_\varepsilon
 \quad\Longrightarrow\quad
 |g_m(w)-g_m(v)|<\varepsilon/3.
\end{equation}

Now let \(D\) be a $C^1$ disk tangent to $\cC^E_{\theta}$, \(I=[a,b)_{\mathbb Z}\), and \(y,z\in D\)
satisfy
\[
 d(f^jy,f^jz)<\delta_\varepsilon,\qquad  a\le j<b.
\]
Write \(D_j=f^jD\) for $j\ge 0$.
Assume first that $m<b-a$.
For \(a+m\le j<b\), put $ w=f^{j-m}y$ and $v=f^{j-m}z$.
Observe that
$$
T_{f^jy}D_j=Df_w^m(T_wD_{j-m}),\qquad T_{f^jz}D_j=Df_v^m(T_vD_{j-m}),
$$
and $T_wD_{j-m}$ and $T_vD_{j-m}$ are contained in $\cC^E_{\theta}(w)$ and $\cC^E_{\theta}(v)$, respectively. By \eqref{mwg},
it follows that
$$
 |J_j(f^jy)-g_m(w)|<\varepsilon/3,\qquad |J_j(f^jz)-g_m(v)|<\varepsilon/3.
$$
Since \(j-m\in I\), we have \(d(w,v)<\delta_\varepsilon\) by assumption, which together with \eqref{delt} implies
$
|g_m(w)-g_m(v)|<\varepsilon/3.
$
Altogether, we obtain
\[
 \begin{aligned}
 |J_j(f^jy)-J_j(f^jz)|
 &\le |J_j(f^jy)-g_m(w)|
      +|g_m(w)-g_m(v)|\\
 &\qquad +|g_m(v)-J_j(f^jz)|<\varepsilon.
 \end{aligned}
\]
In view of the choice of $\gamma$, this gives
\[
 \begin{aligned}
 |J_I(y)-J_I(z)|
 &\le
 \sum_{j=a}^{a+m-1}|J_j(f^jy)-J_j(f^jz)|+\sum_{j=a+m}^{b-1}|J_j(f^jy)-J_j(f^jz)|\\
 &\le 2 m \gamma
 +\varepsilon (b-a-m)\\
 &\le 2m\gamma+\varepsilon(b-a).
 \end{aligned}
\]
If $m\ge b-a$, then
$$
|J_I(y)-J_I(z)|\le (b-a)2\gamma\le 2m\gamma<2m\gamma+\varepsilon(b-a).
$$
Thus the conclusion holds with
\(
 C_\varepsilon:=\exp(2m_\varepsilon\gamma).
\)
\end{proof}

\paragraph{Invariance of local graphs.}
For a radius \(0<\delta < \delta_0\), use the local
exponential chart at \(x\in U\) to define
\[
 \mathscr F_x(\delta):=
 \{\Delta=\exp_x(\operatorname{graph}\psi)\subset B(x,\delta)\},
\]
where \(\Omega\subset\widetilde E(x)\) is an open neighborhood of \(0\),
\(\psi\in C^1(\Omega,\widetilde F(x))\), \(\psi(0)=0\), and
\(\operatorname{Lip}(\psi)\le2\theta\).

By the continuity and the invariance of the cone fields, we get the following result.

\begin{lemma}\label{lem:local-graphs}
There exists $0 < \delta_1 < \delta_0$ such that the families $\mathscr{F}_x:=\mathscr{F}_x(\delta_1)$ are locally invariant:
\begin{equation*}
  f(\Delta) \cap B(fx,\delta_1) \in \mathscr F_{fx}
  \qquad \text{whenever $x \in U$ and $\Delta \in \mathscr F_x$.}
\end{equation*}
Moreover, for every $C^1$ disk $D$ tangent to $\cC_{\theta}^E$ and every $x \in D$, we have
$B_D(x,\delta) \in \mathscr F_x$ provided that
$0 < \delta < \min\{\delta_1, \frac{1}{2} d_D(x,\partial D)\}$.
\end{lemma}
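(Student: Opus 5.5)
The plan is to prove both assertions of Lemma~\ref{lem:local-graphs} in the exponential chart at a point of \(\overline U\), using only three ingredients: uniform continuity of the extended splitting \(\widetilde E\oplus\widetilde F\) on \(\overline U\), the strict cone invariance \eqref{invc} with factor \(\tau_0<1\), and compactness of \(\overline U\).

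\textbf{Local representation of \(f\).} Fix \(x\in U\) and write
\[
g_x=\exp_{fx}^{-1}\circ f\circ\exp_x,
\]
defined on a small ball in \(T_x\mathbf M\). By \(C^1\) continuity and compactness of \(\overline U\), for every \(\epsilon>0\) there is \(\delta>0\), independent of \(x\), such that
\[
\|Dg_x(v)-Df_x\|<\epsilon \qquad \text{for } \|v\|<\delta,
\]
where both sides are read in the frames \(\widetilde E(x)\oplus\widetilde F(x)\) and \(\widetilde E(fx)\oplus\widetilde F(fx)\). Uniform continuity of \(\widetilde E,\widetilde F\) also lets us compare, for \(w=\exp_x v\), the cone \(\cC^E_\theta(w)\) pulled back by \(D\exp_x^{-1}\) with the constant cone \(\{\|v^F\|\le\theta'\|v^E\|\}\) in \(T_x\mathbf M\), for \(\theta'\) slightly different from \(\theta\).

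\textbf{Invariance.} Let \(\Delta=\exp_x(\operatorname{graph}\psi)\) with \(\operatorname{Lip}(\psi)\le 2\theta\). Tangent vectors of \(\operatorname{graph}\psi\) lie in the constant \(2\theta\)-cone. I expect \eqref{invc} to transfer to constant cones at scale \(\delta_1\), in the form
\[
Df_x\bigl(\{\|v^F\|\le 2\theta\|v^E\|\}\bigr)\subset\{\|v^F\|\le \tau_0'\,2\theta\|v^E\|\} \quad\text{with } \tau_0'<1,
\]
after taking \(\theta\) small or recalling that the adapted-metric domination gives the contraction for all widths up to \(2\theta\). Given this, the perturbation \(\epsilon\) keeps \(Dg_x\) mapping the \(2\theta\)-cone into itself. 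Consequently the projection \(\pi_E\circ g_x\) restricted to \(\operatorname{graph}\psi\) is a local diffeomorphism onto its image, with derivative bounded below by the conorm of \(Df|_{\widetilde E}\) minus \(O(\epsilon+\theta)\). It is injective because two points of the graph differ by a cone vector, and cone vectors are mapped with norm comparable to the \(E\)-component. Hence \(g_x(\operatorname{graph}\psi)\) is the graph of a \(C^1\) function \(\psi'\) over an open set \(\Omega'\) with \(\operatorname{Lip}(\psi')\le 2\theta\). Since \(0\) maps to \(0\), we get \(\psi'(0)=0\), and \(\Omega'\) contains a neighbourhood of \(0\). Intersecting with \(B(fx,\delta_1)\) keeps the graph form; its \(E\)-domain is the open set of points of \(\Omega'\) whose graph point lies in the ball. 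I take this intersection to be the admissible object, interpreting \(\Omega\) as any open set containing \(0\) (possibly shrinking to its connected component through \(0\)). This gives the first claim, provided \(\delta_1\) is chosen so that \(g_x\) is defined on \(B(0,\delta_1)\) and \(f(B(x,\delta_1))\subset B(fx,\delta_0)\).

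\textbf{The disk claim.} For a \(C^1\) disk \(D\) tangent to \(\cC_\theta^E\) and \(\delta<\min\{\delta_1,\tfrac12 d_D(x,\partial D)\}\), the set \(B_D(x,\delta)\) lies in \(B(x,\delta)\) since intrinsic distance dominates extrinsic distance. Its lift \(\exp_x^{-1}B_D(x,\delta)\) has tangent planes in the constant \(2\theta\)-cone by the cone comparison above, so the projection to \(\widetilde E(x)\) is a local diffeomorphism. It is injective because any two points are joined inside \(D\) by a curve of length less than \(2\delta\) with tangents in the cone, and the \(E\)-projection of such a curve has length comparable to its full length. The margin \(\tfrac12 d_D(x,\partial D)\) guarantees that these curves stay in \(D\). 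Therefore the set is a graph with Lipschitz constant at most \(2\theta\) over an open domain containing \(0\).

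\textbf{Main obstacle.} The hard step is the uniform cone contraction with some room to spare: it is needed for graphs of slope \(2\theta\), whereas \eqref{invc} is stated at width \(\theta\). I would handle this by applying \eqref{invc}, which is available for all sufficiently small widths once \(\theta\) is fixed small, at width \(2\theta\), and then letting \(\delta_1\) absorb the \(\epsilon\) errors.
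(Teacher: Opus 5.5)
Your route is the standard cone-field/graph-transform argument. The paper does not actually prove this lemma; it only cites ``the continuity and the invariance of the cone fields''. Your treatment of the first assertion works, after three small repairs:
\begin{itemize}
\item Invariance at width $2\theta$ does not follow from \eqref{invc} at width $\theta$. It must be arranged by the same adapted-metric and shrinking-of-$U$ choice. ``All sufficiently small widths'' is not the right formulation, since thin cones about the non-invariant $\widetilde E$ need not be invariant off $\Lambda$; only widths in $[\theta,2\theta]$ are needed.
\item The cone is not convex when $\dim E\ge 2$. So write $g_x(p)-g_x(q)=\bar A(p-q)$ with $\bar A=\int_0^1 Dg_x(q+s(p-q))\,ds$, an $\epsilon$-perturbation of $Df_x$, rather than pushing forward tangent vectors.
\item Do not pass to the component through $0$. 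The claim concerns the whole set $f(\Delta)\cap B(fx,\delta_1)$, and $\Omega$ need not be connected.
\end{itemize}

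The genuine gap is in the disk assertion. Knowing that two points of $B_D(x,\delta)$ are joined by a curve of length $<2\delta$ whose $E$-projection has comparable length does not give injectivity of $\pi_E$ when $\dim E\ge 2$, because the projected curve can be a closed loop. The helicoidal strip $\{(r\cos\phi,r\sin\phi,\epsilon\phi):1<r<2,\ 0<\phi<4\pi\}\subset\mathbb R^2\times\mathbb R$ has all tangent planes in the $\epsilon$-cone, and every curve on it has projection of comparable length, yet it projects two-to-one.

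The missing idea is a lifting (monodromy) argument, and this is where the margin $2\delta<d_D(x,\partial D)$ is really used. Keeping curves in $D$ is automatic for the intrinsic distance, so that is not its role. The argument runs as follows:
\begin{itemize}
\item $\pi_E$ is a local diffeomorphism on $D$ near $x$, and lifts lengthen paths by at most a factor $1+O(\theta)$.
\item Let $B\subset\widetilde E(x)$ be the ball of radius slightly larger than $\delta$. Every radial segment in $B$ lifts from $x$ to a path of intrinsic length $<2\delta<d_D(x,\partial D)$, so the lift never reaches $\partial D$.
\item This gives a $C^1$ map $\Psi$ on the convex set $B$ with $\|D\Psi\|\le\theta'<2\theta$, whose graph lies in $D$.
\item If $p\in B_D(x,\delta)$ and $\gamma$ is a path of length $<\delta$ from $x$ to $p$, then $\pi_E\gamma\subset B$. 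By uniqueness of lifts through a local homeomorphism, $\gamma$ is the lift of $\pi_E\gamma$ along this graph, so $p$ lies on it.
\end{itemize}
This yields injectivity. Because $B$ is convex, it also yields the \emph{global} bound $\operatorname{Lip}\le 2\theta$. Your pointwise tangent-plane bound gives only a local Lipschitz bound on the possibly nonconvex domain $\pi_E(B_D(x,\delta))$. The global bound is exactly what your invariance step uses when it asserts that any two points of a graph in $\mathscr F_x$ differ by a cone vector.
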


For \(x\in U\), define the local transverse fiber
$$
 \mathcal T_x:=\exp_x(\widetilde F(x))\cap B(x,\delta_1).
$$
By local graph structure and uniform transversality, there exists \(C_0\ge 4\) such that,
for every \(C^1\) disk \(\Sigma\subset U\) tangent to
\(\cC_\theta^E\), with center \(w\) and intrinsic radius
\(0<r\le\delta_1\), and every \(z\in U\) satisfying \(d(w,z)<r/C_0\),
the intersection \(\Sigma\cap\mathcal T_z\) consists of a unique point \(p\), with
\begin{equation*}
 d_\Sigma(w,p)\le C_0d(w,z).
\end{equation*}

\subsubsection{One-component geometry and gap transfer}

Throughout, we fix constants \(c,L,r_*,N\) as in
Theorem~\ref{thm:multi-gibbs}.

\paragraph{Choice of the working radius.}
Take \(K_N\ge1\) and \(\rho_N>0\) from
Proposition~\ref{prop:suffix-preball}, with \(\sigma=c/2\).
Decrease the radius \(R_N\) used to define the suffix preballs,
if necessary, so that
$$
 C_0(1+K_N)R_N<\min\{\rho_N,\delta_1\}
$$
and
$$
 d_{f^tD}(f^ty,\partial(f^tD))>C_0(1+K_N)R_N
$$
whenever $D$ is a $C^1$ disk tangent to $\cC^E_{\theta}$, $y\in D$ and satisfies
\[
 d_D(y,\partial D)\ge r_*,
 \qquad t\in\{0\}\cup S_{c,L,N}^E(y).
\]
Such a choice is possible: Proposition~\ref{prop:suffix-preball}(ii)
gives the uniform lower bound \(\rho_N\) for all sufficiently large
suffix times, while the remaining finitely many times satisfy
\[
 d_{f^tD}(f^ty,\partial(f^tD))
 \ge \frac{r_*}{\|Df^{-t}\|_\infty}.
\]
Set
\begin{equation}\label{dgeo}
 \delta_{\rm geo}:=\frac{R_N}{2K_NC_0}.
\end{equation}

\begin{lemma}
\label{lem:component-geometry}
Let \(D\) be a $C^1$ disk tangent to $\cC^E_{\theta}$, and let \(y, z \in D\) satisfy
\(\min\{d_D(y,\partial D),d_D(z,\partial D)\}\ge r_*\).
Suppose that \(0 \le a \le b\) are integers satisfying $\{a,b\}\setminus\{0\}
 \subset S_{c,L,N}^E(y)\cap S_{c,L,N}^E(z)$.
If
\[
  d_{f^a D}(f^a y, f^a z) < 2R_N
  \quad \text{and} \quad
  \max_{a\le j \le b}d(f^j y, f^j z) < \delta_{\rm geo},
\]
then
\begin{equation*}
 \max_{a\le j\le b}
 d_{f^jD}(f^jy,f^jz)<R_N.
\end{equation*}
\end{lemma}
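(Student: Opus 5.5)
We argue by a continuity (bootstrap) argument along the time interval $[a,b]$, combining the transverse-fiber projection with the backward contraction of Proposition~\ref{prop:suffix-preball}(i) at the terminal suffix time $b$. Let $j_0\in[a,b]$ be the largest index such that $d_{f^jD}(f^jy,f^jz)<2R_N$ holds for all $a\le j\le j_0$. This set of indices is nonempty by hypothesis. First we show $j_0=b$. Then we upgrade the bound $2R_N$ to $R_N$ using contraction from $b$ backwards.

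\emph{Step 1: one-step propagation.} Suppose $j<b$ and $d_{f^jD}(f^jy,f^jz)<2R_N$. Then $d_{f^{j+1}D}(f^{j+1}y,f^{j+1}z)\le\|Df\|_\infty 2R_N$, which is a bounded multiple of $R_N$. By the choice of $R_N$, this is still below $\delta_1$ and below the intrinsic distance from $f^{j+1}y$ to $\partial f^{j+1}D$; for the latter we want $f^{j+1}y$ at good distance from the boundary, and we only know this at times in $\{0\}\cup S^E_{c,L,N}(y)$, so a little care is needed. Using Lemma~\ref{lem:local-graphs}, the piece $B_{f^{j+1}D}(f^{j+1}y,r)$ with $r=C_0(1+K_N)R_N$ is a local graph of small Lipschitz constant. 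Since $d(f^{j+1}y,f^{j+1}z)<\delta_{\rm geo}<r/C_0$, the transverse fiber $\mathcal T_{f^{j+1}z}$ meets this piece in a unique point $p$ with $d_{f^{j+1}D}(f^{j+1}y,p)\le C_0\delta_{\rm geo}=R_N/(2K_N)$. Because $f^{j+1}z$ itself lies on $f^{j+1}D$, within intrinsic distance less than $r$ of $f^{j+1}y$, and $f^{j+1}D\cap\mathcal T$ is locally unique, we get $p=f^{j+1}z$. Hence $d_{f^{j+1}D}(f^{j+1}y,f^{j+1}z)\le R_N/(2K_N)<2R_N$. Induction gives $j_0=b$, and moreover the intrinsic distance is at most $R_N/(2K_N)$ for every $j\in(a,b]$.

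\emph{Step 2: the conclusion.} At $j=a$ itself we only know the bound $2R_N$, so we pass through the endpoint $b$. There $d_{f^bD}(f^by,f^bz)\le R_N/(2K_N)\le\rho_N$. If $b\ne0$ then $b\in S^E_{c,L,N}(y)$, so Proposition~\ref{prop:suffix-preball}(i) applies and gives
\[
d_{f^jD}(f^jy,f^jz)\le K_Ne^{-\sigma(b-j)}\frac{R_N}{2K_N}<R_N
\]
for all $0\le j\le b$, and in particular for $a\le j\le b$. If $b=0$, then $a=b=0$ and Step 1 already gives the bound directly from the transverse-fiber projection at time $0$, which has good boundary distance $r_*$.

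\emph{Main obstacle.} The main difficulty is the boundary-margin issue in Step 1 at intermediate times $j\notin S^E_{c,L,N}(y)$. There, $f^jy$ might be close to $\partial f^jD$, so the intrinsic ball may fail to be a full local graph. To handle this, I would first try a backward argument. Using $b\in S^E(y)$, the working-radius choice gives the margin $C_0(1+K_N)R_N$ at time $b$. Pulling back the intrinsic ball of radius $C_0(1+K_N)R_N$ around $f^by$ with the contraction of Proposition~\ref{prop:suffix-preball}(i) (which needs this radius to be at most $\rho_N$, as ensured) shows that intrinsic balls of radius at least a fixed multiple of $R_N$ around $f^jy$ lie in $f^jD$ for all $j\le b$. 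This supplies the margin needed for Lemma~\ref{lem:local-graphs} and the transverse-fiber uniqueness at each intermediate time.
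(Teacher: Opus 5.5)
\textbf{Where the gap is: the boundary margin at intermediate times in Step 1.} Your Step 2 is fine: once $d_{f^bD}(f^by,f^bz)\le R_N/(2K_N)$ is known, Proposition~\ref{prop:suffix-preball}(i) at $y$ finishes. Your case $a=b$ also works, by the transversal argument at a time where the margin is known. The gap is Step 1. At every intermediate time $j+1\in(a,b]$ you apply the second part of Lemma~\ref{lem:local-graphs} and the $C_0$-transversality property to the intrinsic ball $B_{f^{j+1}D}(f^{j+1}y,r)$. That requires $r<\tfrac12 d_{f^{j+1}D}(f^{j+1}y,\partial f^{j+1}D)$, and this margin is only available for $j+1\in\{0\}\cup S^E_{c,L,N}(y)$.

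Your proposed repair goes the wrong way. Proposition~\ref{prop:suffix-preball}(i) says that $f^{-(b-j)}B_{f^bD}(f^by,r)$ is \emph{contained in} the small ball $B_{f^jD}(f^jy,K_Ne^{-\sigma(b-j)}r)$. That is an upper bound on the preimage. It gives no lower bound on the preimage's inradius, hence none on $d_{f^jD}(f^jy,\partial f^jD)$. A lower bound would have to come from $\|Df^{b-j}\|$, which degenerates exponentially. Indeed, nothing prevents the intermediate margin from being exponentially small: a stretch of $E$-contraction followed by strong expansion still allows $b$ to be a sliding suffix time. A secondary point: $2\|Df\|_\infty R_N<C_0(1+K_N)R_N$ is not guaranteed by the choice of $R_N$.

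\textbf{The missing idea: use boundary margins only at the two endpoints $a$ and $b$, and carry the ``same sheet'' information by local graphs instead of intrinsic balls.} This is what the paper does.
\begin{enumerate}
\item At the terminal time, work on $z$'s side. Let $p_b$ be the unique point of $B_{f^bD}(f^bz,R_N)\cap\mathcal T_{f^by}$, so $d_{f^bD}(p_b,f^bz)<C_0\delta_{\rm geo}$.
\item Pull back with Proposition~\ref{prop:suffix-preball}(i) at $z$, which applies since $b\in S^E_{c,L,N}(z)$. Then $p_j:=f^{j-b}p_b$ satisfies $d_{f^jD}(p_j,f^jz)<R_N/2$, hence $d(p_j,f^jy)<\delta_1$.
\item At time $a$, where the margin is known, $p_a\in\Delta_a:=B_{f^aD}(f^ay,3R_N)\in\mathscr F_{f^ay}$.
\item Push forward using the \emph{first}, margin-free part of Lemma~\ref{lem:local-graphs}: $\Delta_{j+1}:=f(\Delta_j)\cap B(f^{j+1}y,\delta_1)\in\mathscr F_{f^{j+1}y}$ still contains $p_{j+1}$. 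This needs no convexity of the graph domains.
\item A local graph through $f^by$ meets $\mathcal T_{f^by}$ only at $f^by$, so $p_b=f^by$. Hence $p_j=f^jy$ and $d_{f^jD}(f^jy,f^jz)<R_N/2$ for all $a\le j\le b$.
\end{enumerate}
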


\begin{proof}
Write \(D_j=f^jD\), \(y_j=f^jy\), and \(z_j=f^jz\) for every \(j\ge0\). By the choice of $R_N$, $D_b$ contains the intrinsic ball \(B_{D_b}(z_b, R_N)\).
Since
$R_N<\delta_1$ and $d(z_b,y_b)<\delta_{\rm geo}<R_N/C_0$,
the ball \(B_{D_b}(z_b,R_N)\) meets the local transversal
\(\mathcal T_{y_b}\) in a unique point, which we denote by \(p_b\).
Moreover,
\begin{equation}\label{eq:pb-close}
d_{D_b}(p_b,z_b)
\le C_0 d(y_b,z_b)
<C_0\delta_{\rm geo}<R_N.
\end{equation}

First suppose \(a=b\). Then \(D_a=D_b\). From the hypothesis
\(d_{D_b}(y_b,z_b)<2R_N\) and \eqref{eq:pb-close}, we get
\[
 d_{D_b}(y_b,p_b)
 \le d_{D_b}(y_b,z_b)+d_{D_b}(z_b,p_b)
 <2R_N+C_0\delta_{\rm geo}<3R_N.
\]
Noting that
\(
 3R_N<
 \min\{\delta_1,\frac12 d_{D_b}(y_b,\partial D_b)\},
\)
Lemma~\ref{lem:local-graphs} gives
\(B_{D_b}(y_b,3R_N)\in\mathscr F_{y_b}\).
Therefore,
\[
 p_b\in B_{D_b}(y_b,3R_N)\cap\mathcal T_{y_b}
 =\{y_b\},
\]
and hence
\(
 d_{D_b}(y_b,z_b)<C_0\delta_{\rm geo}<R_N.
\)
So the conclusion holds when \(a=b\), including \(a=b=0\).

Now assume $0\le a<b$.
For $a \le j \le b$, define $p_j := f^{j-b}(p_b)$.
Recalling $R_N<\rho_N$, it follows from Proposition~\ref{prop:suffix-preball}(i) and \eqref{eq:pb-close} that
\begin{equation}\label{eq:auxiliary-point-pullback}
  d_{D_j}(p_j,z_j) < K_Ne^{-\frac c2(b-j)}C_0\delta_{\rm geo} \le K_NC_0\delta_{\rm geo}=R_N/2, \qquad a\le j\le b.
\end{equation}
As a consequence, for any \(a\le j\le b\),
\begin{equation}\label{closeyp}
  d(y_j,p_j) \le d(y_j,z_j)+d_{D_j}(z_j,p_j) < (1+K_NC_0)\delta_{\rm geo} < \delta_1.
\end{equation}
Taking \(j=a\) in \eqref{eq:auxiliary-point-pullback} and using
\(d_{D_a}(y_a,z_a)<2R_N\), we get
\[
  d_{D_a}(y_a,p_a)\le d_{D_a}(y_a,z_a)+d_{D_a}(p_a,z_a)< 2R_N+R_N/2< 3R_N.
\]
Using \(3R_N < \min\{\delta_1,\frac12 d_{D_a}(y_a,\partial D_a)\}\) and Lemma~\ref{lem:local-graphs}, we obtain
$
p_a\in B_{D_a}(y_a,3R_N)\in \mathscr{F}_{y_a}.
$

Setting
\[
  \Delta_a:=B_{D_a}(y_a,3R_N), \qquad
  \Delta_{j+1}:=f(\Delta_j)\cap B(y_{j+1},\delta_1), \quad a\le j<b.
\]
Due to \eqref{closeyp}, we can use the invariance of local graphs inductively, showing
$
p_{b}\in \Delta_{b}\in \mathscr{F}_{y_{b}}.
$
As $\{p_b\}=B_{D_b}(z_b,R_N)\cap \mathcal T_{y_b}$, it follows that
$
p_b\in \Delta_{b}\cap \mathcal T_{y_b}=\{y_b\}.
$
Hence, $y_b=p_b$, and thus \(p_j=y_j\) for all \(a\le j\le b\). Finally, using
\eqref{eq:auxiliary-point-pullback},
\[
 \max_{a\le j\le b}d_{D_j}(y_j,z_j)
 <R_N/2<R_N.
\]
The proof is complete.
\end{proof}

For the volume estimates below, set
\[
 v_N:=q^{-1}R_N^{\dim E},
 \qquad
 V_N:=qR_N^{\dim E},
\]
where $q$ is given in \eqref{eq:local-graph-volume}.
The next lemma controls volume across a gap by a constant
independent of its length.

\begin{lemma}
\label{lem:gap-transfer}
Let $D$ be an admissible \(E\)-disk of finite volume.
Let \(0\le b<a\) be integers, and let \(\Gamma\subset D\)
be a nonempty measurable set with
\(\diam(f^a\Gamma)<\delta_{\rm geo}\) such that
\[
 \forall\, y\in \Gamma, \qquad d_D(y,\partial D)\ge r_*,
 \qquad a\in S_{c,L,N}^E(y).
\]
If \(b>0\), assume also that, for some \(y_0\in\Gamma\),
\begin{equation*}
 b\in S_{c,L,N}^E(y_0),
 \qquad
 f^b\Gamma\subset B_{f^bD}(f^by_0,R_N).
\end{equation*}
Then
$$
 \Leb_{f^bD}(f^b\Gamma)
 \le C_{\rm gap}\sup_{y\in\Gamma}
 \Leb_{f^aD}\bigl(
 f^a\Gamma\cap B_{f^aD}(f^ay,R_N)
 \bigr),
$$
where \(C_{\rm gap}\ge1\) is independent of \(a,b\) and \(\Gamma\).
\end{lemma}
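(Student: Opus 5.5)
The plan is to first reduce to a single ball at time \(a\), and then transport volume backwards from time \(a\) to time \(b\) by bounded distortion, using that both endpoints are sliding suffix times.

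\emph{Step 1: localization at time \(a\).} Fix \(y\in\Gamma\). Since \(\diam(f^a\Gamma)<\delta_{\rm geo}\), every \(z\in\Gamma\) satisfies \(d(f^az,f^ay)<\delta_{\rm geo}\). I would apply the argument in the case \(a=b\) of Lemma~\ref{lem:component-geometry}: the transversal \(\mathcal T_{f^ay}\) meets \(B_{f^aD}(f^az,R_N)\) in a unique point, and local graph uniqueness in \(\mathscr F\) upgrades ambient closeness to intrinsic closeness. This should give \(d_{f^aD}(f^az,f^ay)<R_N\) whenever the two points already lie in a common local graph piece of \(f^aD\) of size \(3R_N\).

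The subtlety is that \(f^aD\) may fold back, so ambient closeness alone does not force intrinsic closeness. Here the hypothesis for \(b>0\) is used. Both \(f^b\Gamma\subset B_{f^bD}(f^by_0,R_N)\) and \(b\in S^E_{c,L,N}(y_0)\) hold, and \(a\in S^E_{c,L,N}\) holds at every point of \(\Gamma\). Combining Proposition~\ref{prop:suffix-preball}(i) at time \(a\) with forward iteration of the local graph family from time \(b\) (as in the proof of Lemma~\ref{lem:component-geometry}) should put \(f^a\Gamma\) inside a single ball \(B_{f^aD}(f^ay_0,R_N)\), after possibly shrinking constants. When \(b=0\), the boundary margin \(r_*\) and Lemma~\ref{lem:local-graphs} play the same role.

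\emph{Step 2: transport by bounded distortion.} With \(f^a\Gamma\subset B_{f^aD}(f^ay_0,R_N)\) and \(R_N<\rho_N\), change variables between \(f^bD\) and \(f^aD\):
\[
\Leb_{f^bD}(f^b\Gamma)=\int_{f^a\Gamma}\bigl|\Jac(Df^{a-b}|_{T f^bD})\bigr|^{-1}\circ f^{b-a}\,d\Leb_{f^aD}.
\]
Corollary~\ref{prop:bounded-distortion}, applied with terminal time \(a\in S^E_{c,L,N}(y_0)\) and initial time \(b<a\), compares the Jacobian at each point of \(\Gamma\) with the one at \(y_0\) up to the factor \(\mathscr L\). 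The same identity for the full preball \(V:=(f^a|_D)^{-1}B_{f^aD}(f^ay_0,R_N)\) then gives
\[
\frac{\Leb_{f^bD}(f^b\Gamma)}{\Leb_{f^bD}(f^bV)}\le\mathscr L^2\frac{\Leb_{f^aD}(f^a\Gamma)}{\Leb_{f^aD}(B_{f^aD}(f^ay_0,R_N))}.
\]

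\emph{Step 3: uniform volume bounds.} Finally I would bound \(\Leb_{f^bD}(f^bV)\) from above and \(\Leb_{f^aD}(B_{f^aD}(f^ay_0,R_N))\) from below, uniformly in \(a\) and \(b\). Since \(f^bV\subset f^b\widehat V\) has intrinsic diameter at most a constant times \(\rho_N\), by Proposition~\ref{prop:suffix-preball}(i), and \(f^bD\) is cone-tangent, \eqref{eq:local-graph-volume} gives \(\Leb_{f^bD}(f^bV)\le C'\). The choice of \(R_N\) keeps the ball away from \(\partial f^aD\), so \(\Leb_{f^aD}(B_{f^aD}(f^ay_0,R_N))\ge v_N\). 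Together these give \(C_{\rm gap}=\mathscr L^2C'/v_N\), which is independent of \(a\), \(b\) and \(\Gamma\). The supremum over \(y\in\Gamma\) appears because Step 1 localizes \(f^a\Gamma\) into the ball centred at a single point of \(f^a\Gamma\).

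\emph{Main obstacle.} I expect Step 1 to be the hardest: ruling out that \(f^a\Gamma\) is spread over several intrinsically distant sheets of \(f^aD\) that are ambiently \(\delta_{\rm geo}\)-close. If this cannot be shown, the fallback is to cover \(f^a\Gamma\) by boundedly many \(R_N\)-balls. The count would be controlled by the intrinsic size of \(f^a\Gamma\), obtained by pushing \(f^b\Gamma\), which lies in one \(R_N\)-ball at time \(b\), forward with Lemma~\ref{lem:component-geometry}. That covering number is then absorbed into \(C_{\rm gap}\).
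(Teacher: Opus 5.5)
Your Step 1 cannot be carried out, and your fallback fails for the same reason. Between times $b$ and $a$ nothing keeps the orbits of points of $\Gamma$ ambiently close. You only know $\diam(f^a\Gamma)<\delta_{\rm geo}$ at the single time $a$, and $b\in S^E_{c,L,N}(y_0)$ for the single point $y_0$. So Lemma~\ref{lem:component-geometry} does not apply on $[b,a]$: it needs $\max_{b\le j\le a}d(f^jy_0,f^jz)<\delta_{\rm geo}$ and $b\in S^E_{c,L,N}(z)$. Forward iteration of the local graphs needs $\delta_1$-closeness at every step. Proposition~\ref{prop:suffix-preball}(i) at time $a$ presupposes the intrinsic closeness $d_{f^aD}(f^ay_0,f^az)\le\rho_N$ that you are trying to prove.

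In fact the localization is false in general. The map $f^{a-b}$ expands $B_{f^bD}(f^by_0,R_N)$ along $E$, so its image can have intrinsic size exponential in $a-b$. That image can return to a $\delta_{\rm geo}$-ball on many different sheets of $f^aD$; think of a short unstable segment of a linear Anosov map of $\mathbb T^2$, where every time is a suffix time. Thus $f^a\Gamma$ need not lie in one $R_N$-ball. The number of $R_N$-balls needed to cover it is not bounded uniformly in $a,b$, so it cannot be absorbed into $C_{\rm gap}$. The case $b=0$ has no localization hypothesis at all. The supremum over $y\in\Gamma$ in the statement already signals that many balls are involved.

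The missing idea is to bound the total volume of the pullbacks of the covering balls, not their number.

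\textbf{Covering at time $a$.} Take a maximal family of pairwise disjoint balls $\Delta_i=B_{f^aD}(f^ay_i,R_N)$ with $y_i\in\Gamma$. It is finite because $f^aD$ has finite volume and each $\Delta_i$ has volume at least $v_N$. For $y\in\Gamma$, maximality gives some $i$ with $d_{f^aD}(f^ay,f^ay_i)<2R_N$. Your time-$a$ argument (Lemma~\ref{lem:component-geometry} with both times equal to $a$) upgrades this to $<R_N$, so the $\Delta_i$ cover $f^a\Gamma$.

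\textbf{Pullback to time $b$.} The sets $W_i:=f^{b-a}\Delta_i$ are pairwise disjoint and cover $f^b\Gamma$. If $b>0$, Proposition~\ref{prop:suffix-preball}(i) at the suffix time $a$ of $y_i$ gives $W_i\subset B_{f^bD}(f^by_i,K_NR_N)\subset B_{f^bD}(f^by_0,(1+K_N)R_N)$. If $b=0$, then $W_i\subset D$. Hence $\sum_i\Leb_{f^bD}(W_i)\le C_{\rm vol}$, however many balls there are. This is where the hypothesis on $y_0$ and the finiteness of $\Leb_D(D)$ are really used: they bound the pullback region, not the position of $f^a\Gamma$.

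\textbf{Distortion and summation.} Apply your Step 2 comparison via Corollary~\ref{prop:bounded-distortion} to each pair $(W_i,\Delta_i)$ and sum:
\[
\Leb_{f^bD}(f^b\Gamma)\le\sum_i\frac{\mathscr L^2\Leb_{f^bD}(W_i)}{v_N}\,\Leb_{f^aD}(f^a\Gamma\cap\Delta_i)\le\frac{\mathscr L^2C_{\rm vol}}{v_N}\sup_{y\in\Gamma}\Leb_{f^aD}\bigl(f^a\Gamma\cap B_{f^aD}(f^ay,R_N)\bigr).
\]
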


\begin{proof}
Write \(D_j=f^jD\) for every \(j\ge0\). For every $y\in \Gamma$, as $
 d_D(y,\partial D)\ge r_*$ and $a\in  S_{c,L,N}^E(y)$, the choice of $R_N$ guarantees that the intrinsic ball $B_{D_a}(f^ay, R_N)$ is well-defined. Recall that
\begin{equation}\label{vn}
 \Leb_{D_a}(B_{D_{a}}(f^ay, R_N))\ge v_N \qquad \text{for any } y\in \Gamma.
 \end{equation}

Since $D_a$ has finite volume, we can select a maximal collection of pairwise disjoint balls $\{\Delta_i := B_{D_a}(f^ay_i, R_N)\}_{i=1}^k$ centered at points $y_i \in \Gamma$. We claim that
$$
f^a\Gamma\subset \bigcup_{1\le i \le k}\Delta_i.
$$
To this end, take any  \(y\in\Gamma\), the maximality gives an index \(i\) such that
\(
 B_{D_a}(f^ay,R_N)\cap\Delta_i\ne\varnothing.
\)
Hence \(d_{D_a}(f^ay,f^ay_i)<2R_N\). By applying Lemma~\ref{lem:component-geometry}, we get
\(d_{D_a}(f^ay,f^ay_i)<R_N\), which means $f^ay\in \Delta_i$. Hence the above covering property follows.

Put \(W_i:=f^{b-a}\Delta_i\) for every $1\le i \le k$. By construction, $W_i$ are pairwise disjoint and satisfy
\begin{equation}\label{coverw}
f^b\Gamma\subset \bigcup_{1\le i \le k}W_i.
\end{equation}
If \(b>0\), Proposition~\ref{prop:suffix-preball}(i)
and our assumption on $y_0$ imply
\[
 W_i
 \subset B_{D_b}(f^by_i,K_NR_N)
 \subset B_{D_b}\left(f^by_0,(1+K_N)R_N\right),\qquad 1\le i \le k.
\]
If \(b=0\), then all \(W_i\) lie in \(D\).
By \eqref{eq:local-graph-volume}, the constant
\[
 C_{\rm vol}:=1+\Leb_D(D)+q\bigl((1+K_N)R_N\bigr)^{\dim E}
\]
satisfies
\begin{equation}\label{c11}
 \sum_{i=1}^k\Leb_{D_b}(W_i)\le C_{\rm vol}.
\end{equation}
Since \(a\in S_{c,L,N}^E(y_i)\) and \(R_N<\rho_N\), by Corollary~\ref{prop:bounded-distortion}, it follows that
\[
 \frac{\Leb_{D_b}(f^b\Gamma\cap W_i)}
      {\Leb_{D_b}(W_i)}
 \le \mathscr L^2
 \frac{\Leb_{D_a}(f^a\Gamma\cap\Delta_i)}
      {\Leb_{D_a}(\Delta_i)}.
\]
Summing over $i$ and using \eqref{vn}, \eqref{coverw}, \eqref{c11}, it yields
$$
\begin{aligned}
  \Leb_{D_b}(f^b\Gamma) &\le \sum_{i=1}^k \Leb_{D_b}(f^b\Gamma \cap W_i) \\
  &\le \frac{\mathscr{L}^2}{v_N} \left( \sup_{y \in \Gamma} \Leb_{D_a}(f^a\Gamma \cap B_{D_a}(f^ay, R_N)) \right) \sum_{i=1}^k \Leb_{D_b}(W_i) \\
  &\le \frac{\mathscr{L}^2 C_{\rm vol}}{v_N} \sup_{y \in \Gamma} \Leb_{D_a}\bigl(f^a\Gamma \cap B_{D_a}(f^ay, R_N)\bigr).
\end{aligned}
$$
The conclusion follows with
$
 C_{\rm gap}:=\max\{1,\mathscr L^2C_{\rm vol}/v_N\}.
$
\end{proof}

\subsubsection{Proof of Theorem~\ref{thm:multi-gibbs}}

\begin{proof}[Proof of Theorem~\ref{thm:multi-gibbs}]
Fix \(\varepsilon>0\), and let \(\delta_\varepsilon, C_\varepsilon\) be the constants given by Lemma~\ref{lem:jacobian-comparison}. Set \(\widehat{\delta} := \min\{\delta_\varepsilon, \delta_{\rm geo}\}\), where \(\delta_{\rm geo}\) is specified in \eqref{dgeo}.
Choose \(\delta < \widehat{\delta}\) small enough such that
\[
  d(fu, fv) < \widehat{\delta} \quad \text{whenever } d(u, v) < \delta.
\]
Let \(\cP\) be a finite Borel partition with \(\diam \cP < \delta\), and let
\[
  Q = \bigcup_{i=1}^k I_i, \qquad I_i = [a_i, b_i)_{\mathbb Z}, \qquad b_i < a_{i+1}, \quad 1 \le i < k.
\]
Fix \(x\in D\), let \(\Gamma\) be as in the statement,
and write \(D_j:=f^jD\) for \(j\ge0\).
We may assume that \(\Gamma\cap\cP^Q(x)\ne\varnothing\). By the choice of \(\delta\),
\begin{equation}\label{mclose}
 \max_{a_i\le j\le b_i}d(f^jy,f^jz)<\widehat{\delta}
 \qquad
 \text{for all } y,z\in\cP^Q(x) \text{ and }  1\le i\le k.
\end{equation}
For every \(y \in \Gamma \cap \cP^Q(x)\) and \(1 \le i \le k\), define
\[
  A_i(y) := \{z \in \Gamma \cap \cP^Q(x) : d_{D_{a_i}}(f^{a_i}z, f^{a_i}y) < R_N\}, \quad
  M_i := \sup_{y \in \Gamma \cap \cP^Q(x)} \Leb_{D_{a_i}}\bigl(f^{a_i}A_i(y)\bigr).
\]
Recall that \(C_{\rm gap}\) is the constant given by
Lemma~\ref{lem:gap-transfer}.

\paragraph{Claim 1.} \textit{We have} \(\Leb_D(\Gamma \cap \cP^Q(x)) \le C_{\rm gap} M_1\).

\begin{proof}[Proof of Claim 1]
If \(a_1 > 0\), the statement follows directly from applying Lemma~\ref{lem:gap-transfer} to \(\Gamma \cap \cP^Q(x)\) between times \(0\) and \(a_1\).

If \(a_1 = 0\), choose a maximal collection of pairwise disjoint intrinsic balls \(\{B_D(y_\ell, R_N)\}_{\ell=1}^m\) centered at points \(y_\ell \in \Gamma \cap \cP^Q(x)\). Since each ball has volume at least \(v_N=q^{-1}(R_N)^{\dim E}\), we conclude
\[
 m\le\Leb_D(D)/v_N\le C_{\rm gap},
\]
by the choice of \(C_{\rm gap}\) in the proof of
Lemma~\ref{lem:gap-transfer}.
By the maximality of this collection, for every \(z \in \Gamma \cap \cP^Q(x)\) there exists \(1\le \ell \le m\) such that \(d_D(z, y_\ell) < 2R_N\). Since \(d(z, y_\ell) < \delta < \delta_{\rm geo}\) by the choice of \(\delta\), Lemma~\ref{lem:component-geometry} with \(a = b = 0\) implies \(d_D(z, y_\ell) < R_N\). Thus, these balls cover \(\Gamma \cap \cP^Q(x)\), and we obtain
\[
  \Leb_D\left(\Gamma \cap \cP^Q(x)\right) \le \sum_{\ell=1}^m \Leb_D\bigl(\Gamma \cap \cP^Q(x) \cap B_D(y_\ell, R_N)\bigr) \le m M_1 \le C_{\rm gap} M_1,
\]
which proves Claim 1.
\end{proof}

\paragraph{Claim 2.} \textit{We have}
\[
  M_1 \le V_N C_\varepsilon^k C_{\rm gap}^{k-1} \exp\left( \varepsilon \#Q - \sum_{i=1}^k J_{I_i}(x) \right).
\]

\begin{proof}[Proof of Claim 2]
For each \(1 \le i \le k\), \eqref{mclose} and Lemma~\ref{lem:jacobian-comparison} guarantee that
\[
  J_{I_i}(z) \ge J_{I_i}(x) - \varepsilon(b_i - a_i) - \log C_\varepsilon \qquad \text{for all } z \in \Gamma \cap \cP^Q(x).
\]
Consequently, for any \(y \in \Gamma \cap \cP^Q(x)\), a change of variables yields
\begin{equation}\label{aibi}
  \begin{aligned}
    \Leb_{D_{a_i}}\bigl(f^{a_i}A_i(y)\bigr)
    &= \int_{f^{b_i}A_i(y)} \Jac\bigl(Df^{-(b_i-a_i)}|_{T_u D_{b_i}}\bigr) \, d\Leb_{D_{b_i}}(u) \\
    &= \int_{f^{b_i}A_i(y)} e^{-J_{I_i}(f^{-b_i}u)} \, d\Leb_{D_{b_i}}(u) \\
    &\le C_\varepsilon e^{\varepsilon(b_i - a_i) - J_{I_i}(x)} \Leb_{D_{b_i}}\bigl(f^{b_i}A_i(y)\bigr).
  \end{aligned}
\end{equation}
By Lemma~\ref{lem:component-geometry}, for each \(1 \le i \le k\) and \(y \in \Gamma \cap \cP^Q(x)\), we have the inclusion
\begin{equation}\label{eq:gibbs-component-ball}
  f^{b_i}A_i(y) \subset B_{D_{b_i}}(f^{b_i}y, R_N).
\end{equation}

For \(1 \le i < k\), we verify the hypotheses of Lemma~\ref{lem:gap-transfer} for \(A_i(y)\) between times \(b_i\) and \(a_{i+1}\):
\begin{itemize}
  \item Every \(z \in A_i(y) \subset \Gamma\) satisfies \(d_D(z, \partial D) \ge r_*\) and \(a_{i+1} \in S_{c,L,N}^E(z)\).
  \item Since \(a_{i+1}\in Q\), we have
\(
 f^{a_{i+1}}A_i(y)\subset\cP(f^{a_{i+1}}x)\), thus \(\diam\bigl(f^{a_{i+1}}A_i(y)\bigr)
 <\delta<\delta_{\rm geo}\).
  \item The point \(y \in A_i(y)\) satisfies \(b_i \in S_{c,L,N}^E(y)\) and \eqref{eq:gibbs-component-ball}.
\end{itemize}
Applying Lemma~\ref{lem:gap-transfer} then yields
\[
  \begin{aligned}
    \Leb_{D_{b_i}}\bigl(f^{b_i}A_i(y)\bigr)
    &\le C_{\rm gap} \sup_{z \in A_i(y)} \Leb_{D_{a_{i+1}}}\left( f^{a_{i+1}}A_i(y) \cap B_{D_{a_{i+1}}}(f^{a_{i+1}}z, R_N) \right) \\
    &= C_{\rm gap} \sup_{z \in A_i(y)} \Leb_{D_{a_{i+1}}}\left( f^{a_{i+1}}\bigl(A_i(y) \cap A_{i+1}(z)\bigr) \right) \\
    &\le C_{\rm gap} \sup_{z \in A_i(y)} \Leb_{D_{a_{i+1}}}\bigl( f^{a_{i+1}}A_{i+1}(z) \bigr) \le C_{\rm gap} M_{i+1}.
  \end{aligned}
\]

For \(i = k\), \eqref{eq:gibbs-component-ball} directly bounds the volume by
\[
\Leb_{D_{b_k}}\bigl(f^{b_k}A_k(y)\bigr) \le \Leb_{D_{b_k}}\bigl(B_{D_{b_k}}(f^{b_k}y,R_N)\bigr)\le V_N.
\]
These together with \eqref{aibi}, yield
\[
  M_i \le C_\varepsilon C_{\rm gap} e^{\varepsilon(b_i - a_i) - J_{I_i}(x)} M_{i+1} \quad (1 \le i < k), \qquad
  M_k \le C_\varepsilon V_N e^{\varepsilon(b_k - a_k) - J_{I_k}(x)}.
\]
Iterating these inequalities proves Claim 2.
\end{proof}

Combining the two claims, we obtain
\[
 \Leb_D\left(\Gamma \cap \cP^Q(x)\right)
 \le V_N(C_\varepsilon C_{\rm gap})^k
 \exp\left(
 \varepsilon\#Q-
 \sum_{j\in Q}\log\Jac(Df|_{T_{f^jx}D_j})
 \right).
\]
Setting \(C := \max\{1, V_N, C_\varepsilon C_{\rm gap}\}\) and noting that \(\#\partial Q = 2k\), we have
\[
  V_N (C_\varepsilon C_{\rm gap})^k \le C^{k+1} \le C^{2k} = C^{\#\partial Q} \quad \text{for all } k \ge 1.
\]
This completes the proof of Theorem~\ref{thm:multi-gibbs}.
\end{proof}

\section{Empirical basin production}
\label{sec:marked-basin}

We prove the following leafwise version of Theorem~\ref{thm:B}.

\begin{theorem}
\label{thm:lvl-basin-production}
Let \(D\subset U\) be an admissible \(E\)-disk and let
\(\Gamma\subset D\) be measurable with \(\Leb_D(\Gamma)>0\).
Suppose that, for \(\Leb_D\)-almost every \(x\in\Gamma\), there is
\(\mu\in\pw(x)\) such that
\begin{equation}\label{eq:lvl-joint-signs}
 \lambda_E^-(y)>0\quad\text{for \(\mu\)-almost every }y,
 \qquad
 \mu\{y:\lambda_F^+(y)<0\}>0.
\end{equation}
Then there exists an ergodic hyperbolic SRB measure \(\mu_*\), supported
on \(\Lambda\), such that
\[
 \lambda_E^-(\mu_*)>0>\lambda_F^+(\mu_*),
 \qquad
 \Leb_D\bigl(\Gamma\cap\Basin(\mu_*)\bigr)>0.
\]
\end{theorem}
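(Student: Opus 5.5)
Following the variational approach of Burguet--Yang, the plan is to produce the measure $\mu_*$ from a Følner--Gibbs selection along the synchronized times of Theorem~\ref{thm:empirical-synchronization}, and then to detect its basin by a density argument.

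\textbf{Fixing uniform parameters.} For $\Leb_D$-a.e.\ $x\in\Gamma$ pick $\mu^x\in\pw(x)$ and a subsequence $n_j(x)$ satisfying \eqref{eq:lvl-joint-signs}. Theorem~\ref{thm:empirical-synchronization} then gives rational $c,\eta,\rho$ and integers $L,\ell,M_0$. Since these parameters range over a countable set, there is a single choice of $(c,\eta,\rho,L,\ell,M_0)$ that works on a subset $\Gamma_1\subset\Gamma$ of positive measure. Shrinking further, we may fix $r_*>0$ with $d_D(\cdot,\partial D)\ge r_*$ on $\Gamma_1$. Fix also $N\ge N_0$ (from Proposition~\ref{prop:suffix-preball} with $\sigma=c/2$) and $M\ge M_0$. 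Then $\Gamma_1\subset D^\infty$, and Proposition~\ref{thm:dynamical-density-new} lets us pass to $\Lebesgue$-density points: by restricting to the subset where the density convergence of $V_b^D(x)$ is uniform for large $b$, we obtain a set $\Gamma_2\subset\Gamma_1$ of positive measure.

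\textbf{Følner--Gibbs measures.} For $x\in\Gamma_2$ and large $j$, let $T_j(x)=S^E_{c,L,N}(x)\cap G^F_{\eta,\ell,M}(x)\cap[0,n_j)$, which has density at least $\rho$. Following Burguet--Yang, we select from $T_j(x)$, together with the endpoints of maximal carrier stretches, a finite union $Q_j$ of intervals. We arrange that the nonzero endpoints lie in $S^E_{c,L,N}$, that $\#\partial Q_j=o(n_j)$ (this uses $N,M$ large and a Følner-type selection), and that $\#Q_j\ge \rho n_j/2$.

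Pigeonholing over the finitely many possible patterns $Q$ gives a positive-measure set $\Gamma_Q$ on which a common $Q=Q_j$ works. The number of patterns must be $e^{o(n_j)}$, which is controlled because $\#\partial Q$ is small. Let $m_j$ be the normalized $\Leb_D$ on $\Gamma_Q$, and form $\zeta_j=\frac1{\#Q}\sum_{t\in Q}f^t_*m_j$. Theorem~\ref{thm:multi-gibbs} yields
\[
H_{m_j}(\mathcal P^{Q})\ \ge\ \int J_Q\,dm_j-\varepsilon\#Q-\#\partial Q\log C-o(n_j).
\]
Passing to the limit $\zeta_j\to\zeta$ with the standard Misiurewicz argument, adapted to unions of intervals, gives
\[
h_\zeta(f)\ \ge\ \int\log\Jac(Df|_E)\,d\zeta-\varepsilon.
\]
Letting $\varepsilon\to0$ along a diagonal sequence, we conclude that $\zeta$ is an $E$-Gibbs candidate.

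\textbf{Identifying $\mu_*$ and locating its basin.} Because the times in $Q$ are sliding-suffix times, Lemma~\ref{lem:robust-suffix-conorm} gives $\lambda_E^-\ge c/2$ a.e.\ for $\zeta$. The carrier property, with averages at least $\eta/2$, gives $\int\psi^F_\ell\,d\zeta\ge\eta/2$. By the Ruelle inequality applied along $E$ together with the entropy formula characterization, $\zeta$ is a Gibbs $E$-state. Almost every ergodic component of $\zeta$ is then a Gibbs $E$-state (Proposition~\ref{prop:gibbs-E-basic}), and a positive-weight set of these components has $\lambda_F^+<0$. Such a component is therefore an ergodic hyperbolic SRB measure.

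To show $\Leb_D(\Gamma\cap\Basin(\mu_*))>0$, argue by contradiction. Suppose every such component has basin null on $\Gamma$. Using the density points and bounded distortion (Corollary~\ref{prop:bounded-distortion}), one could run the whole construction on $\Gamma$ minus those basins. Since the candidate measures have uniform exponents, there are only countably many of them by the finiteness-type accumulation argument, so this removal is legitimate. On the other hand, the Pesin unstable manifolds of $\mu_*$ push forward, via the suffix preballs at scale $\rho_N$, onto disks close to $f^{b}D$, and absolute continuity of the Pesin stable lamination then forces a positive $\Leb_D$-fraction to lie in $\Basin(\mu_*)$. This contradicts the assumption.

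\textbf{Main obstacle.} I expect the main obstacle to be this last step, passing from ``some component of the limit is SRB'' to basin intersection with $\Gamma$. The issue is that $\zeta$ is built from $\Gamma$ only in an averaged sense, so one needs the Følner selection to be done relative to the density-point preballs $V_b^D(x)$ in order to transfer positive measure. The pattern pigeonhole with subexponential count is the second delicate point.
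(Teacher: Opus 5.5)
There is a genuine gap in the hyperbolicity/SRB step. You never show that $\zeta$ is hyperbolic at almost every point, and without this the entropy bound does not produce an SRB (or Gibbs $E$-) state. You use a single fixed $M$ and build $Q_j$ so that only its \emph{endpoints} are synchronized times. Interior times of $Q_j$ are then in general neither sliding suffix times nor carrier times, so neither ``$\lambda_E^-\ge c/2$ $\zeta$-a.e.'' nor $\int\psi_\ell^F\,d\zeta\ge\eta/2$ follows. Even an integral bound would give $F$-contraction only on part of $\zeta$, and then $h_\zeta(f)\ge\int\log\Jac(Df|_E)\,d\zeta$ is not enough. Ruelle's inequality bounds entropy by the sum of \emph{all} positive exponents, so components with positive $F$-exponents can carry excess entropy that compensates a deficit on the $F$-contracting components. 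Ledrappier--Young needs equality with that full sum, and your bound concerns full entropy, not partial entropy along $E$.

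The paper avoids this with two ingredients missing from your plan:
\begin{enumerate}
\item The F{\o}lner selection is made with a \emph{filling} property: $Q_j=\mathcal T_j(x)^{[P_j]}$ on $E_j$, with $\lim_p\limsup_j\sup_{x\in E_j}\#(Q_j\setminus\mathcal T_j(x)^{[p]})/n_j=0$. Thus almost every time of $Q_j$ lies at most $p$ steps before a synchronized time.
\item The carrier parameter is $M_j\to\infty$. Then all but $O\bigl(R(1+n_j/M_j)\bigr)$ synchronized times have backward $\psi_\ell^F$-sums at least $\eta r/2$ on every window of length $N\le r\le R$. A fixed $M$ only controls windows inside carrier components, which may have length about $M$.
\end{enumerate}
Together these give $\nu\bigl(\bigcup_{s\ge1}f^{-s}K\bigr)=1$, where $K$ is the set on which $A_r^E(f^{-r}y)\ge cr/2$ and $\sum_{s<r}\psi_\ell^F(f^{-s}y)\ge\eta r/2$ for all $r\ge N$. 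Hence $\lambda_E^-\ge c/2$ and $\lambda_F^+\le-\eta/2$ hold $\nu$-a.e., and only then do Ruelle and Ledrappier--Young make $\nu$ itself a hyperbolic SRB measure.

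A smaller gap: your $n_j=n_j(x)$ depends on $x$. Before pigeonholing patterns you need a common horizon at which the good set has $\Leb_D$-measure at least $e^{-o(n_j)}$. The paper gets this by Borel--Cantelli: $\Leb_D(B_{j,n})\ge e^{-n/j}$ for infinitely many $n$.

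The basin step also has a gap. Your contradiction argument is circular. If every candidate basin meets $\Gamma$ in a null set, removing them leaves $\Gamma$ unchanged mod zero, and rerunning the construction asks the same question. Also, countability of ergodic hyperbolic SRB measures is standard, whereas a ``finiteness-type accumulation argument'' would need $u$-definiteness, which is not assumed here.

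What is needed is a direct coupling between the limit measure and suffix-time preballs of points of $\Gamma^*$. The paper does this as follows:
\begin{enumerate}
\item Set $\beta_j=\frac1{\#Q_j}\int_{E_j}\sum_{t\in Q_j\cap\mathcal T_j(x)}\delta_{f^tx}\,dm_j(x)$. Any limit then satisfies $0\le\beta\le\nu$ and $\beta(\mathbf M)\ge\rho$.
\item Writing $\nu=\sum_iw_i\mu_i$ with disjoint basins, some $\mu_*$ has $\beta(\Basin(\mu_*))>0$.
\item A Pesin block of $\mu_*$ and a small open $V$ with $\beta(V)>0$ give a uniform basin fraction $c_*$ on terminal disks $B_{f^tD}(w,R_N)$ with $w\in V$.
\item The portmanteau theorem yields $x\in E_j\subset\Gamma^*$ and a \emph{synchronized} time $t\ge T$ with $f^tx\in V$.
\item Bounded distortion, valid precisely because $t\in S^E_{c,L,N}(x)$, pulls $c_*$ back to $V_t^D(x)$.
\item Uniform density on $\Gamma^*$ gives $\Leb_D(V_t^D(x)\setminus\Gamma)<\frac{c_*}{2\mathscr L^2}\Leb_D(V_t^D(x))$, which yields positive measure for $\Gamma\cap\Basin(\mu_*)$.
\end{enumerate}
You correctly flag this as the main obstacle. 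The device that resolves it is missing from your proposal: a measure supported on synchronized times and dominated by $\nu$.
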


Theorem \ref{thm:B} can be deduced from Theorem \ref{thm:lvl-basin-production} as follows.

\begin{proof}[Proof of Theorem~\ref{thm:B}]
Let \(\mathcal S\) be the at most countable family of ergodic
hyperbolic SRB measures \(\nu\), supported on \(\Lambda\), satisfying
\(
 \lambda_E^-(\nu)>0>\lambda_F^+(\nu).
\)
It suffices to show the measurable subset
\[
 \Gamma_0:=
 \Gamma\setminus\bigcup_{\nu\in\mathcal S}\Basin(\nu)
\]
admits zero Lebesgue measure.

Assume by contradiction that \(\Leb(\Gamma_0)>0\).
Since $\Gamma_0\subset B(\Lambda)=\bigcup_{\ell \ge 0}f^{-\ell}(U)$, there exists $\ell\ge 0$ such that
$
\Leb\left(f^{\ell}(\Gamma_0)\cap U\right)>0.
$
By Fubini's theorem, there is a \(C^2\) embedded closed disk
\(D\subset U\), tangent to \(\cC_\theta^E\), such that
\(\widehat\Gamma:=D\cap f^\ell(\Gamma_0)\) has positive leaf volume.
The disk \(D\) is admissible, and
\(\pw(f^\ell x)=\pw(x)\), so \(\widehat\Gamma\) satisfies the
hypotheses of Theorem~\ref{thm:lvl-basin-production}.
Thus there exists \(\nu\in\mathcal S\) such that
$$
\Leb_D\left(\widehat{\Gamma}\cap \Basin(\nu)\right)>0.
$$
In particular, we have $f^{\ell}(\Gamma_0)\cap \Basin(\nu)\neq \varnothing$. Using the invariance of \(\Basin(\nu)\), we get
$$
\Gamma_0\cap \Basin(\nu)\neq \varnothing,
$$
a contradiction to the definition of $\Gamma_0$. So the proof of Theorem~\ref{thm:B} is complete.

\end{proof}

\paragraph{Preliminary reductions.}
By restricting to a relatively compact admissible subdisk meeting
\(\Gamma\) in positive volume, we may assume that
\(\Leb_D(D)<\infty\). After further restricting \(\Gamma\), we fix
\(r_*>0\) such that
\begin{equation*}
 d_D(x,\partial D)\ge2r_*
 \qquad \text{for every } x\in\Gamma,
\end{equation*}
and assume that the empirical hypothesis holds at every point of
\(\Gamma\).

\subsection{F\o lner selection}

For a finite set \(T\subset\mathbb Z_{\ge0}\) and an integer \(p\ge1\), define
\begin{equation*}
 T^{[p]}:=
 \bigcup_{\substack{a,b\in T\\0<b-a\le p}}[a,b)_{\mathbb Z}.
\end{equation*}
One can check by construction that, for $T\subset[0,n)_{\mathbb Z}$ and $p\ge 1$:
\begin{equation}\label{tpes}
\partial T^{[p]}\subset T,\qquad \#\partial T^{[p]}\le 2n/p+2, \qquad \#(T\setminus T^{[p]})\le n/p+1.
\end{equation}
Indeed, the components of \(T^{[p]}\) have endpoints in \(T\)
and are separated by gaps longer than \(p\), so there are at most
\(n/p+1\) components.
Likewise, any two distinct points of \(T\setminus T^{[p]}\)
are more than \(p\) apart, giving the last bound.

\begin{proposition}
\label{prop:hyperbolic-folner-selection}
There exist \(c,\eta,\rho>0\), integers \(L,\ell\ge1\) and \(N\ge N_0\),
a measurable set \(\Gamma^*\subset\Gamma\) of positive volume,
sequences of positive integers \(M_j,n_j\to\infty\) with
\(n_j\ge jM_j\), measurable sets \(E_j\subset\Gamma^*\), and
nonempty finite sets \(Q_j\subset[0,n_j)_{\mathbb Z}\) such that,
writing
\[
 \mathcal T_j(x):=
 S_{c,L,N}^E(x)\cap G_{\eta,\ell,M_j}^{F}(x)
 \cap[0,n_j),
\]
the following hold:
\begin{enumerate}[(i)]
\item\label{com1}
\[
\lim_{j\to \infty} \frac1{n_j}\log\Leb_D(E_j)=0.
\]
\item We have \(\#\partial Q_j/n_j\to0\), and, for every \(x\in E_j\),
\[
 \partial Q_j\subset\mathcal T_j(x),\qquad
 \#(Q_j\cap\mathcal T_j(x))\ge\rho n_j.
\]
\item\label{com3}
\begin{equation}\label{eq:prop-filling}
 \lim_{p\to\infty}\limsup_{j\to\infty}\sup_{x\in E_j}
 \frac{\#(Q_j\setminus\mathcal T_j(x)^{[p]})}{n_j}=0.
\end{equation}
\item\label{com4} For the corresponding suffix preballs,
\begin{equation}\label{eq:uniform-density}
 \lim_{B\to\infty}
 \sup_{\substack{x\in\Gamma^*\\
                  b\in S_{c,L,N}^E(x),\ b\ge B}}
 \frac{\Leb_D(V_b^D(x)\setminus\Gamma)}{\Leb_D(V_b^D(x))}=0.
\end{equation}
\end{enumerate}
Here \(N_0\) is given by Proposition~\ref{prop:suffix-preball}
with \(\sigma=c/2\).
\end{proposition}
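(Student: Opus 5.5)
The plan is a pigeonhole over countably many parameter choices, then a combinatorial selection of $Q_j$ along a common subsequence. For every $x\in\Gamma$, apply Theorem~\ref{thm:empirical-synchronization} to a measure $\mu_x\in\pw(x)$ satisfying \eqref{eq:lvl-joint-signs} and to a subsequence $n_j(x)$ realizing it. This gives rational $c,\eta,\rho$ and integers $L,\ell,M_0$. Since the parameter set is countable, there is one choice of $(c,\eta,\rho,L,\ell,M_0)$ that is shared by a subset $\Gamma_1\subset\Gamma$ of positive volume. To make this selection measurable, I replace ``there is $\mu\in\pw(x)$'' by the measurable condition that for every $N,M$ and every $n_0$ there is $n\ge n_0$ with $\#(S^E_{c,L,N}(x)\cap G^F_{\eta,\ell,M}(x)\cap[0,n))\ge\rho n$. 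The sets $S$ and $G$ are defined by finitely many continuous inequalities up to time $n$, so this condition is Borel. Fix $N\ge\max\{N_0,L\}$, where $N_0$ comes from Proposition~\ref{prop:suffix-preball} with $\sigma=c/2$.

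For item~(iv), I first apply Proposition~\ref{thm:dynamical-density-new} to $A=\Gamma_1\cap D^\infty$. Note that $\Gamma_1\subset D^\infty$ up to a null set, because by Theorem~\ref{thm:empirical-synchronization} the sliding suffix times have positive upper density and are therefore unbounded. Almost every point is then a suffix dynamical density point of $\Gamma$. Egorov's theorem upgrades this pointwise limit to a uniform one on a positive-volume subset $\Gamma^*\subset\Gamma_1$, which gives \eqref{eq:uniform-density}.

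Next, fix $M_j\to\infty$ with $M_j\ge M_0$, and choose $n_j$ inductively. For each candidate time $n\ge jM_j$, consider the set of $x\in\Gamma^*$ at which the density $\rho$ is attained at time $n$. By Fatou's lemma (or a counting argument over $n$), there are infinitely many $n$ for which this set has volume at least $e^{-o(n)}$. Indeed, if $\sum_n$ of these volumes converged, Borel--Cantelli would contradict the fact that almost every point attains the density infinitely often; I can even pick a fixed-fraction subsequence. I take $n_j$ among these.

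For items~(ii) and~(iii), I fix $p_j\to\infty$ slowly and partition the good set at time $n_j$ according to the trace $T\cap\mathcal{T}$-pattern. Concretely, I record the set $T_x:=\mathcal T_j(x)^{[p_j]}$ through its boundary $\partial T_x$. By \eqref{tpes}, $\#\partial T_x\le 2n_j/p_j+2$, so the number of possible boundaries is at most $\binom{n_j}{\le 2n_j/p_j+2}=e^{o(n_j)}$. Pigeonholing on the boundary gives $E_j$ with $\Leb_D(E_j)\ge e^{-o(n_j)}$ on which $\partial T_x$ is a fixed set, and I put $Q_j:=T_x$. Then $\partial Q_j\subset\mathcal T_j(x)$ and $\#\partial Q_j/n_j\to0$. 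Moreover $Q_j\supset\mathcal T_j(x)$ up to at most $n_j/p_j+1$ points, so $\#(Q_j\cap\mathcal T_j(x))\ge(\rho-1/p_j)n_j-1$. After shrinking $\rho$, this gives~(ii).

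For~(iii), the components of $Q_j$ are exactly unions of consecutive gaps of $\mathcal T_j(x)$ of length at most $p_j$, for every $x\in E_j$. Hence $Q_j\setminus\mathcal T_j(x)^{[p]}$ consists of gaps of length in $(p,p_j]$. I expect this to be the main obstacle: such long gaps can cover a non-negligible fraction even after the choice of $Q_j$. My first attempt is to also pigeonhole, for each dyadic scale $p\le p_j$, on the set of components of $\mathcal T_j(x)^{[p]}$, at cost $e^{O(n_j/p)}$. This only gives $e^{o(n_j)}$ if scales are weighted summably. Alternatively, I would choose $Q_j:=\bigcap$ over a finite scale ladder. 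If neither works, I would drop the goal of equality and simply define $Q_j$ as the set of times covered, for all $x\in E_j$, by gaps shorter than $p$, using a diagonal choice in $p$.
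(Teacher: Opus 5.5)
Your parameter pigeonhole, your proof of (iv) via Proposition~\ref{thm:dynamical-density-new} and Egorov, your choice of $n_j$ by Borel--Cantelli, and your proof of (ii) by pigeonholing on $\mathcal T_j(x)^{[p_j]}$ through its boundary all match the paper. The genuine gap is item (iii), which you leave open. With a scale $p_j\to\infty$ fixed in advance, (iii) can simply fail. Nothing in your construction rules out the following configuration: for every $x$, $\mathcal T_j(x)$ fills $[0,n_j/2)$ and is $g_j$-spaced on $[n_j/2,n_j)$, with $g_j\to\infty$ and $g_j\le p_j$. Then $Q_j=\mathcal T_j(x)^{[p_j]}$ is essentially all of $[0,n_j)$. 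But $\mathcal T_j(x)^{[p]}$ misses roughly $[n_j/2,n_j)$ for every fixed $p<g_j$, so the quantity in \eqref{eq:prop-filling} stays near $1/2$.

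None of your proposed remedies fixes this:
\begin{itemize}
\item Pigeonholing on the exact sets $\mathcal T_j(x)^{[p]}$ at a bounded scale $p$ costs about $e^{cn_j/p}$, which is not $e^{o(n_j)}$. In any case, fixing these sets does not control their cardinalities.
\item An intersection over a scale ladder $p_1<p_2<\cdots$ is just $\mathcal T_j(x)^{[p_1]}$. If $p_1$ stays bounded, $\#\partial Q_j$ is of order $n_j$. If $p_1\to\infty$, you face the same problem.
\item Intersecting over $x\in E_j$ loses $\partial Q_j\subset\mathcal T_j(x)$ and any control of $\#\partial Q_j$.
\end{itemize}

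The missing idea, in the paper's Lemma~\ref{lem:finite-folner-selection}, is to pigeonhole first on the quantized density profile rather than on sets.
\begin{itemize}
\item Put $a_{j,p}(x):=\#\mathcal T_j(x)^{[p]}/n_j$ and choose $r_j\to\infty$ with $r_j\log(r_j+1)=o(n_j)$. Partition the good set by the vector $(\lfloor r_j a_{j,p}(x)\rfloor)_{1\le p\le r_j}$. This has at most $(r_j+1)^{r_j}=e^{o(n_j)}$ classes, so a class of maximal volume still has volume $e^{-o(n_j)}$.
\item On that class the whole profile is independent of $x$ up to $1/r_j$. After a diagonal subsequence, $a_{j,p}\to a_p$ uniformly in $x$ for each fixed $p$. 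The limits $a_p$ are nondecreasing in $p$, with limit $a_\infty$.
\item Only now choose the scale $P_j\to\infty$, adaptively and slowly enough that $a_{j,P_j}\to a_\infty$ uniformly. Then pigeonhole on $\mathcal T_j(x)^{[P_j]}$ via its boundary, as you do.
\end{itemize}
Since $\mathcal T_j(x)^{[p]}\subset Q_j$ for $p\le P_j$, one gets $\#(Q_j\setminus\mathcal T_j(x)^{[p]})/n_j=a_{j,P_j}(x)-a_{j,p}(x)\to a_\infty-a_p$. This tends to $0$ as $p\to\infty$, which is (iii). The scale must be read off from the limiting profile; no a priori choice of $p_j$ works.
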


We first adapt the filling and selection arguments of \cite[Lemmas~1--2]{BurguetSurface}. The main difference here is that we will deal with finite subsets $T_j(x)\subset [0,n_j)_{\mathbb Z}$ that depend on the sequence index $j$.

\begin{lemma}
\label{lem:finite-folner-selection}
Let \((X,\lambda)\) be a finite measure space, let \(n_j\) be positive
integers tending to infinity, and let \(B_j\subset X\) be measurable
sets of positive measure satisfying
\[
\lim_{j\to \infty} \frac1{n_j}\log\lambda(B_j)=0.
\]
Assume that \(x\mapsto T_j(x)\subset[0,n_j)_{\mathbb Z}\) is measurable
on \(B_j\) and, for some \(d>0\), $\#T_j(x)\ge dn_j$ for every $x\in B_j$.
Then, after passing to a subsequence, there exist measurable sets
\(A_j\subset B_j\) of positive measure and nonempty finite sets
\(Q_j\subset[0,n_j)_{\mathbb Z}\), independent of \(x\), such that
\begin{enumerate}[(i)]
\item\label{mAj} \(n_j^{-1}\log\lambda(A_j)\to0\), \(\liminf_{j\to\infty}\#Q_j/n_j\ge d\);
\item\label{bQj} \(\partial Q_j\subset T_j(x)\) for every \(x\in A_j\), and
\(\#\partial Q_j/n_j\to0\);
\item\label{tpc} $T_j$ is dense in $Q_j$ on $A_j$, i.e.,
$$
 \lim_{p\to\infty}\limsup_{j\to\infty}\sup_{x\in A_j}
 \frac{\#(Q_j\setminus T_j(x)^{[p]})}{n_j}=0.
 $$
\item \label{QcapT}
$$
 \liminf_{j\to\infty}\inf_{x\in A_j}
 \frac{\#(Q_j\cap T_j(x))}{n_j}\ge d.
$$
\end{enumerate}
\end{lemma}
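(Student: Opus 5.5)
The plan is a pigeonhole (counting) selection followed by a filling step, in the spirit of \cite[Lemmas~1--2]{BurguetSurface}.

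\textbf{Step 1: choose $p$ and the selected set.} Fix a slowly growing sequence $p_j\to\infty$, say with $p_j\le\sqrt{n_j}$ and $p_j\,\log n_j=o(n_j)$ (in fact $\log n_j=o(n_j/p_j)$ suffices). For $x\in B_j$ set
\[
\widehat Q_j(x):=T_j(x)^{[p_j]}\cup\bigl(T_j(x)\setminus T_j(x)^{[p_j]}\bigr).
\]
This is a union of intervals together with isolated points of $T_j(x)$; each isolated point $a$ is recorded as the singleton interval $[a,a+1)$. By \eqref{tpes}, the set $\widehat Q_j(x)$ has at most $3n_j/p_j+3$ components, and all their left endpoints lie in $T_j(x)$. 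Right endpoints equal to $a+1$ for isolated points $a$ need not lie in $T_j(x)$, so I will instead drop the isolated points, at a cost of at most $n_j/p_j+1=o(n_j)$ in the count of (iv). Thus I take $\widehat Q_j(x):=T_j(x)^{[p_j]}$. Then $\partial\widehat Q_j(x)\subset T_j(x)$, and
\[
\#\bigl(\widehat Q_j(x)\cap T_j(x)\bigr)\ge dn_j-n_j/p_j-1 .
\]

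\textbf{Step 2: pigeonhole.} The map $x\mapsto\widehat Q_j(x)$ is measurable and takes finitely many values. Each value is determined by its boundary, a subset of $[0,n_j]$ of cardinality at most $k_j:=2n_j/p_j+2$. The number of possible values is therefore at most
\[
\sum_{i\le k_j}\binom{n_j+1}{i}\le e^{\,n_j H(k_j/n_j)+o(n_j)}=e^{o(n_j)},
\]
since $k_j/n_j\to0$. Choose a value $Q_j$ whose fiber $A_j$ has maximal measure. Then
\[
\lambda(A_j)\ge\lambda(B_j)\,e^{-o(n_j)},
\]
which gives $n_j^{-1}\log\lambda(A_j)\to0$ and $\lambda(A_j)>0$, and $\#\partial Q_j/n_j\to0$. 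For $x\in A_j$ we have
\[
\#Q_j\ge\#(Q_j\cap T_j(x))\ge dn_j-o(n_j),
\]
giving (i) and (iv), and $\partial Q_j\subset T_j(x)$ gives (ii).

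\textbf{Step 3: filling property (iii).} For $x\in A_j$, every component $[a,b)$ of $Q_j=T_j(x)^{[p_j]}$ is a union of $[t,t')$ with $t<t'$ consecutive elements of $T_j(x)$ and $t'-t\le p_j$. For fixed $p$, the points of $Q_j$ not in $T_j(x)^{[p]}$ lie in such consecutive gaps with $p<t'-t\le p_j$. Hence the density is not automatic, and this is the main obstacle: a single scale $p_j\to\infty$ gives no uniform control at fixed $p$. I would first try passing to a subsequence with a diagonal argument. For each fixed $p$, consider the proportion $g_j(p,x)$ of $[0,n_j)$ covered by gaps of $T_j(x)$ of length in $(p,p_j]$. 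I would include this quantity in the pigeonhole, by quantizing $g_j(p,x)$ for $p$ in a finite range $p\le P_j$ with $P_j\to\infty$ slowly; this still uses only $e^{o(n_j)}$ classes. This makes $g_j(p,\cdot)$ nearly constant on $A_j$.

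It then remains to show $\lim_p\limsup_j g_j(p)=0$. If that fails, I would modify the choice of the scale instead: define $Q_j$ using the scale $p$ for which this gap mass is small, chosen via a subsequence such that the limiting profile $G(p)=\lim_j g_j(p,\cdot)$ exists for all $p$. Because gap masses over the disjoint dyadic ranges $(2^i,2^{i+1}]$ sum to at most $1$, the mass in some range tends to $0$. So I would pick scales $p_j$ adapted to the limit profile, for which the mass in $(p,p_j]$ vanishes as $p\to\infty$, accepting a loss in $\#\partial Q_j$ that stays $o(n_j)$ as long as $p_j\to\infty$. Verifying that this diagonal choice is consistent with the measure bound (i) is the delicate point.
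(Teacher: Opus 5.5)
Steps 1 and 2 are fine and give (i), (ii) and (iv) for any scale $p_j\to\infty$ fixed in advance. But (iii) is the real content of the lemma, and Step 3 does not establish it. The statement you reduce to, $\lim_p\limsup_j g_j(p)=0$, is false for a scale chosen in advance. Take $T_j(x)$ independent of $x$, equal to $[0,\lceil dn_j\rceil)_{\mathbb Z}$ together with all multiples of $p_j$ in $[\lceil dn_j\rceil,n_j)$. Consecutive points of $T_j$ are at most $p_j$ apart, so $T_j^{[p_j]}\supset[0,n_j-p_j)$. For each fixed $p$ and large $j$, however, $\#T_j^{[p]}\le dn_j+p$. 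Hence $\#(Q_j\setminus T_j^{[p]})/n_j\to 1-d$ for every $p$, and passing to subsequences does not help.

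Your fallback (adapt the scale to a limit profile) points in the right direction, but it is not carried out, and as structured it cannot be. The pigeonhole class of Step 2 and the quantities $g_j(p,\cdot)$ are both tied to the old $p_j$, so changing the scale afterwards invalidates them. The dyadic observation controls only the tails of the \emph{limit} profile. It does not control mass that escapes to scales tending to infinity, and that is exactly what happens in the example: every fixed-scale limit sees density $d$, yet $\#T_j^{[p_j]}/n_j\to1$.

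The missing idea is to fix the scale only after a limit profile has been extracted, and to perform the final pigeonhole last. Put $a_{j,p}(x)=\#T_j(x)^{[p]}/n_j$; this is nondecreasing in $p$, and $a_{j,p}\ge d-1/p-1/n_j$.
\begin{enumerate}
\item Choose $r_j\to\infty$ with $r_j\log(r_j+1)=o(n_j)$. Partition $B_j$ by the vector $(\lfloor r_j a_{j,p}(x)\rfloor)_{1\le p\le r_j}$, and keep a class $B_j'$ of maximal measure. This costs only a factor $(r_j+1)^{-r_j}=e^{-o(n_j)}$.
\item A diagonal subsequence gives $\sup_{B_j'}|a_{j,p}-a_p|\to0$ for each fixed $p$, where $a_p\uparrow a_\infty\in[d,1]$.
\item Choose increasing $j_p$ so that this supremum is at most $1/p$ for $j\ge j_p$, and set $P_j=p$ for $j_p\le j<j_{p+1}$. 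Then $a_{j,P_j}\to a_\infty$ uniformly on $B_j'$.
\item Only now pigeonhole on $T_j(x)^{[P_j]}$ over $B_j'$ to obtain $Q_j$ and $A_j$. This is another $e^{o(n_j)}$ loss because $P_j\to\infty$, so the measure bound in (i), which you flagged as delicate, is automatic.
\end{enumerate}
Since $T_j(x)^{[p]}\subset T_j(x)^{[P_j]}=Q_j$ once $P_j\ge p$, one gets $\#(Q_j\setminus T_j(x)^{[p]})/n_j=a_{j,P_j}(x)-a_{j,p}(x)\to a_\infty-a_p$ uniformly on $A_j$. This tends to $0$ as $p\to\infty$, which is (iii).
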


\begin{proof}
For \(x\in B_j\) and \(p\ge1\), set
\[
 a_{j,p}(x):=\frac{\#T_j(x)^{[p]}}{n_j}\in[0,1].
\]
These functions are nondecreasing in \(p\), and \eqref{tpes} together with our assumption gives
\begin{equation}\label{tjn}
 a_{j,p}(x)
 \ge
 \frac{\#T_j(x)-\#(T_j(x)\setminus T_j(x)^{[p]})}{n_j}
 \ge d-\frac1p-\frac1{n_j}.
\end{equation}

Choose integers \(r_j\to\infty\) such that
\(r_j\log(r_j+1)=o(n_j)\).
Partition \(B_j\) according to the vectors
\(
 \bigl(\lfloor r_j a_{j,p}(x)\rfloor\bigr)_{1\le p\le r_j}
 \in\{0,\ldots,r_j\}^{r_j}.
\)
More precisely, for each \(\boldsymbol\alpha=(\alpha_1,\ldots,\alpha_{r_j})
\in\{0,\ldots,r_j\}^{r_j}\), define
\[
 B_{j,\boldsymbol\alpha}
 :=
 \left\{x\in B_j:
 \lfloor r_j a_{j,p}(x)\rfloor=\alpha_p
 \text{ for }1\le p\le r_j\right\}.
\]
Then the nonempty sets \(B_{j,\boldsymbol\alpha}\) form a measurable
partition of \(B_j\) with at most \((r_j+1)^{r_j}\) elements. Take an element
\(B_j'=B_{j,\boldsymbol\alpha_j}\) of maximal \(\lambda\)-measure.
By the assumption on \(\lambda(B_j)\) and the choice of \(r_j\), it follows that
\begin{equation}\label{eq:profile-volume}
 \lim_{j\to\infty}\frac1{n_j}\log\lambda(B_j')=0.
\end{equation}
Moreover,
\begin{equation}\label{uxy}
 |a_{j,p}(x)-a_{j,p}(y)|\le\frac1{r_j}
  \qquad \text{for all } x,y\in B_j' \text{ and } 1\le p\le r_j.
\end{equation}
Fix any \(x_j\in B_j'\). Passing to a diagonal subsequence, we may assume that
\(a_{j,p}(x_j)\to a_p\) for every fixed \(p\ge1\). This together with \eqref{uxy} yields
\begin{equation}\label{ajpuni}
\sup_{x\in B_j'}|a_{j,p}(x)-a_p|\le \sup_{x\in B_j'}|a_{j,p}(x)-a_{j,p}(x_j)|+|a_{j,p}(x_j)-a_p|\longrightarrow 0.
\end{equation}
Passing to the limit in \eqref{tjn}, we obtain
\(a_p\ge d-1/p\).
By the monotonicity of $\{a_{j,p}\}_{p\in \N}$, we know $\{a_p\}_p$ is nondecreasing, thus converges to some $a_{\infty}\in [d,1]$.

\smallskip
By \eqref{ajpuni}, we can choose strictly increasing integers \(j_p\) such that \(
\sup_{x\in B_j'}|a_{j,p}(x)-a_p|\le \frac1p
\) holds for every \(j\ge j_p\).
Define \(P_j:=p\) for \(j_p\le j<j_{p+1}\), ignoring finitely many initial indices.
By the choice of \(j_p\) and the convergence
\(a_{P_j}\to a_\infty\), we have
\begin{equation}\label{fqain}
 \sup_{x\in B_j'}|a_{j,P_j}(x)-a_\infty|
 \le \frac1{P_j}+|a_{P_j}-a_\infty|
 \longrightarrow0.
\end{equation}
For \(x\in B_j'\), put
\(
 \widetilde Q_j(x):=T_j(x)^{[P_j]}.
\)
By \eqref{tpes}, one has that
\begin{equation}\label{fQj}
\partial\widetilde Q_j(x)\subset T_j(x),\qquad
 \#\partial\widetilde Q_j(x)\le 2n_j/P_j+2.
\end{equation}
As \(\widetilde Q_j(x)\subset [0,n_j)_{\mathbb Z}\) is determined by its
boundary, there exist at most
\[
 \sum_{s=0}^{\min\{n_j+1,\lfloor2n_j/P_j+2\rfloor\}}
 \binom{n_j+1}{s}
 =e^{o(n_j)}
\]
choices for $\widetilde Q_j(x)$. Choose a class of maximal \(\lambda\)-measure, let \(Q_j\) be the common set on this class, and set
\[
A_j=\{x\in B_j':\widetilde Q_j(x)=Q_j\}.
\]

\smallskip
Now we show $\{A_j\}$ and $\{Q_j\}$ satisfy our requirements.
Note that
\(
 e^{-o(n_j)}\lambda(B_j')
 \le\lambda(A_j)\le\lambda(B_j'),
\)
which together with \eqref{eq:profile-volume} and \eqref{fqain} yields the assertion \eqref{mAj}. Because $\widetilde Q_j(x)=Q_j$ for every $x\in A_j$, \eqref{fQj} gives assertion \eqref{bQj}. Observe that for every $p\ge 1$, $
T_j(x)^{[p]}\subset Q_j$ for every $x\in A_j$ and all sufficiently large \(j\).
Therefore,
\[
\begin{aligned}
\lim_{p\to\infty}\limsup_{j\to\infty}
\sup_{x\in A_j}\frac{\#(Q_j\setminus T_j(x)^{[p]})}{n_j}
&=
\lim_{p\to\infty}\limsup_{j\to\infty}
\sup_{x\in A_j}\bigl(a_{j,P_j}(x)-a_{j,p}(x)\bigr)\\
&=
\lim_{p\to\infty}(a_\infty-a_p)=0,
\end{aligned}
\]
where we used \eqref{ajpuni} and \eqref{fqain}. This gives assertion \eqref{tpc}.
Assertion \eqref{QcapT} follows from the following observation:
\[
 \begin{aligned}
 \frac{\#(Q_j\cap T_j(x))}{n_j}
 &=\frac{\#T_j(x)-\#(T_j(x)\setminus T_j(x)^{[P_j]})}{n_j}\\
 &\ge d-\frac1{P_j}-\frac1{n_j}.
 \end{aligned}
\]
The proof is now complete.
\end{proof}

Now we can prove Proposition \ref{prop:hyperbolic-folner-selection} by applying Lemma \ref{lem:finite-folner-selection}.

\begin{proof}[Proof of Proposition~\ref{prop:hyperbolic-folner-selection}]
For \(c>0\) and an integer \(L\ge1\), let
\(
 N_0(c,L):=N_0(c/2,c,L,\theta)\ge L
\)
be the integer given by Proposition~\ref{prop:suffix-preball}
with \(\sigma=c/2\).

\smallskip
Consider the countable family of tuples ${\bf{p}}=(c,\eta,\rho,L,N,\ell,M_0)$ satisfying
$$
c,\eta,\rho\in\mathbb Q_{>0},
 \qquad L,\ell,M_0,N\in\mathbb Z_{\ge1},
 \qquad N\ge N_0(c,L).
$$
Define
$$
\Gamma({\bf{p}})=\bigcap_{M\ge M_0}\bigcap_{K\ge 1}\bigcup_{n\ge K}\left\{x\in \Gamma: \#(S_{c,L,N}^E(x)\cap G_{\eta,\ell,M}^{F}(x)
 \cap[0,n))\ge 2 \rho n\right\}.
$$
Since for every $x\in \Gamma$, there exists a $\mu\in\pw(x)$ satisfying \eqref{eq:lvl-joint-signs},
we have $\Gamma=\bigcup_{{\bf{p}}}\Gamma({\bf{p}})$ by applying Theorem~\ref{thm:empirical-synchronization}.
Thus, we can choose a tuple ${\bf{p}}$ such that $\Leb_D(\Gamma({\bf{p}}))>0$ by assumption.

\smallskip
Set $\Gamma_0=\Gamma({\bf{p}})$. As $\Gamma_0\subset D^{\infty}$ by construction, we can apply Proposition~\ref{thm:dynamical-density-new} to $\Gamma_0$, showing that
\[
 \frac{\Leb_D(V_b^D(x)\setminus\Gamma)}
      {\Leb_D(V_b^D(x))}
 \le
 \frac{\Leb_D(V_b^D(x)\setminus\Gamma_0)}
      {\Leb_D(V_b^D(x))}
 \longrightarrow0
 \quad \text{as } b\to\infty,\quad b\in S_{c,L,N}^E(x),
\]
for \(\Leb_D\)-almost every \(x\in\Gamma_0\). By Egorov's theorem, there exists $\Gamma^*\subset\Gamma_0$ of positive $\Leb_D$-measure such that this convergence is uniform, establishing \eqref{eq:uniform-density}.

\smallskip
Choose integers \(M_j\ge M_0\) with \(M_j\to\infty\), and set
\[
 B_{j,n}:=
 \left\{x\in\Gamma^*:\#\left(S_{c,L,N}^E(x)\cap G_{\eta,\ell,M_j}^{F}(x)
 \cap[0,n)\right)\ge2\rho n\right\}.
\]
According to construction, we know \(\Gamma^*\subset\limsup_n B_{j,n}\) for each \(j\).
By the Borel--Cantelli lemma, \(\Leb_D(B_{j,n})\ge e^{-n/j}\) for infinitely many \(n\). Therefore, there is an increasing sequence \(n_j\) such that \(n_j\ge jM_j\) and \(\Leb_D(B_{j,n_j})\ge e^{-n_j/j}\) for every $j\in \N$.

Setting for every $j\in \N$ that
$$
\mathcal T_j(x):=S_{c,L,N}^E(x)\cap G_{\eta,\ell,M_j}^{F}(x)
 \cap[0,n_j),
 $$
$$
 B_j:=B_{j,n_j}=\{x\in \Gamma^*: \#\mathcal T_j(x)\ge 2\rho n_j\}.
$$
Then
$$
\lim_{j\to \infty}\frac1{n_j}\log\Leb_D(B_j)=0.
$$
Now we can apply Lemma~\ref{lem:finite-folner-selection} with $\lambda=\Leb_D$, $T_j=\mathcal T_j$, and $d=2\rho$. This produces measurable sets $E_j\subset B_j$ of positive $\Leb_D$-measure and finite sets $Q_j\subset[0,n_j)_{\mathbb Z}$ satisfying properties \eqref{com1}--\eqref{com3} after discarding finitely many initial indices, since
\[
  \liminf_{j\to\infty}\inf_{x\in E_j} \frac{\#(Q_j\cap\mathcal T_j(x))}{n_j}\ge 2\rho > \rho.
\]
The growth condition \(n_j\ge jM_j\) is preserved under
passage to a subsequence and relabelling, while \eqref{com4} follows
from the choice of \(\Gamma^*\).
\end{proof}

\subsection{Proof of Theorem \ref{thm:lvl-basin-production}}

In this subsection, we shall give the proof of Theorem \ref{thm:lvl-basin-production}.
Fix the parameters and sets given by
Proposition~\ref{prop:hyperbolic-folner-selection}, and define
\begin{equation*}
 m_j:=\frac{\Leb_D|_{E_j}}{\Leb_D(E_j)},
 \qquad
 \nu_j:=\frac1{\#Q_j}\sum_{t\in Q_j}f_*^tm_j.
\end{equation*}
We first establish hyperbolicity and the entropy formula
for weak limits of $\{\nu_j\}$.

\begin{lemma}\label{lem:selected-hyperbolicity}
Every weak limit $\nu$ of $\{\nu_j\}$ is invariant, supported on
$\Lambda$, and satisfies
\begin{equation}\label{eq:selected-signs}
 \lambda_E^-(y)\ge c/2,\qquad
 \lambda_F^+(y)\le-\eta/2
 \quad\text{for }\nu\text{-almost every }y.
\end{equation}
\end{lemma}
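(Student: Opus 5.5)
Invariance will follow from the fact that $\nu_j$ is an average of $f^t_*m_j$ over $t\in Q_j$, where $Q_j$ is a union of at most $\#\partial Q_j/2$ intervals. Hence, for every continuous $\varphi$, the difference $\int\varphi\circ f\,d\nu_j-\int\varphi\,d\nu_j$ is bounded by $\|\varphi\|_\infty\#\partial Q_j/\#Q_j$. Since $\#Q_j\ge\#(Q_j\cap\mathcal T_j(x))\ge\rho n_j$ while $\#\partial Q_j=o(n_j)$, this tends to zero. For the support, we use that $m_j$ is carried by $D\subset U$ and $f^t(U)$ shrinks to $\Lambda$. The part of $Q_j$ below any fixed $T$ has density at most $T/(\rho n_j)\to0$. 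So $\nu$ gives full mass to $\overline{f^T(U)}$ for each $T$, and hence to $\Lambda$.

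For the exponents we will use the integrated quantities $A_p^E$ and $\psi_\ell^F$, both continuous on $\overline U$. For a fixed large $p$, we aim to show
\[
 \liminf_j\int \tfrac1p A_p^E\,d\nu_j\ge c/2-o_p(1),
 \qquad
 \liminf_j \int\psi_{\ell}^{F,(p)}\,d\nu_j\ge \eta/2-o_p(1),
\]
where $\psi^{F,(p)}$ denotes the $p$-step $F$-cone contraction on $\widetilde F$ (or a Birkhoff average of $\psi_\ell^F$, with superadditivity replaced by submultiplicativity of $\|Df^n|_{\widetilde F}\|$ up to cone errors). Pointwise we use the decomposition
\[
 \int g\,d\nu_j=\frac1{\#Q_j}\int\sum_{t\in Q_j}g(f^tx)\,dm_j(x),
\]
and for $x\in E_j$ we split $Q_j$ into $Q_j\cap\mathcal T_j(x)^{[p]}$ and the rest. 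By Proposition~\ref{prop:hyperbolic-folner-selection}\eqref{com3}, the rest has density $o_p(1)$ uniformly, so it contributes $o_p(1)$, since $|A_p^E|/p$ and $\psi$ are bounded. On $\mathcal T_j(x)^{[p]}$ the times are covered by intervals $[a,b)$ with $a,b\in\mathcal T_j(x)$ and $b-a\le p$. Since $b\in S^E_{c,L,N}(x)$, Lemma~\ref{lem:robust-suffix-conorm} gives $A_r^E(f^{b-r}x)\ge (c/2) r$ for $N\le r\le b$. This is expansion over every long suffix ending at $b$, which we concatenate by superadditivity across the chain of consecutive points of $\mathcal T_j(x)$ to get $\frac1p A_p^E(f^tx)\gtrsim c/2$ on average. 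Similarly, $b\in G^F_{\eta,\ell,M_j}(x)$ yields $\sum_{j'=q}^{b}\psi_\ell^F(f^{j'}x)\ge\frac\eta2(b+1-q)$ for $q$ in the component of $\mathcal I_x$, with component lengths at least $M_j\to\infty$. Passing to the limit in $j$ via weak-* convergence, and then letting $p\to\infty$ with Kingman's theorem and Lemma~\ref{lem:cone-bundle-comparison}, gives $\int\lambda_E^-\,d\nu\ge c/2$ and $\int\lambda_F^+\,d\nu\le-\eta/2$ for integrated exponents.

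The main obstacle is upgrading these integrated bounds to the almost-everywhere statement \eqref{eq:selected-signs}. The integral bounds alone do not prevent a bad ergodic component. To handle this I would argue by contradiction. Suppose $\nu(\{\lambda_E^-<c/2-\kappa\})>0$. Then there exist a large $p$ and an open set $O$ of positive $\nu$-measure on which $\frac1pA_p^E<c/2-\kappa/2$. Since $O$ is open, $\liminf_j\nu_j(O)>0$, so by Fubini some $x\in E_j$ spends a positive fraction of $Q_j$-times in $O$. Those times lie, up to density $o_p(1)$, in $\mathcal T_j(x)^{[p]}$. But every point there is followed within distance at most $p$ by a sliding suffix time $b$. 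Applying the conclusion of Lemma~\ref{lem:robust-suffix-conorm} (the suffix estimate holds for all suffix lengths $r\ge N$, in every window rather than just on average) then forces $A_r^E(f^{b-r}x)\ge (c/2)r$ on each such window, contradicting uniform smallness on $O$ once window boundary terms of size $O(N+p)$ are absorbed. The same pointwise argument, using the carrier inequality valid for every $q$ in the component, handles $F$. Getting these window estimates to hold uniformly in $j$ is the delicate step, and I expect it to be the main technical difficulty.
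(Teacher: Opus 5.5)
\textbf{There is a genuine gap in the exponent part, exactly at the step you call delicate.} Your invariance and support arguments are correct and coincide with the paper's. The difficulty is not uniformity in $j$, which the $\sup_{x\in E_j}$ in \eqref{eq:prop-filling} already supplies.

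Superadditivity $A^E_{n+m}\ge A^E_n+A^E_m\circ f^n$ bounds long windows from below by short ones, never the reverse. So the suffix bounds $A^E_r(f^{b-r}x)\ge\frac c2 r$ cannot be ``concatenated'' into lower bounds for $\frac1pA^E_p(f^tx)$, and your integrated estimate does not follow as described. In higher dimension short windows may be weak while long ones expand, which is why sliding suffix times are built from $L$-blocks. The only way to pass from a window to a subwindow is the trivial estimate $A^E_n(y)\ge A^E_{n+m}(y)-m\log\|Df\|_\infty$, which costs the length $m$ of the discarded part.

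In your contradiction step, $O$ is defined by the forward window $[t,t+p)$, and the same $p$ serves as the filling gap. The suffix time $b\in(t,t+p]$ may therefore end anywhere inside $[t,t+p)$, so no window $[b-r,b)$ contains $[t,t+p)$ up to a short remainder. The boundary loss is of order $p$ on a window of length $p$ and cannot be absorbed, so no contradiction follows.

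\textbf{How to repair it.} You need separated scales and backward alignment.
\begin{enumerate}
\item Set $m_*=\nu\{y:\lambda_E^-(y)<c/2-\kappa\}>0$. First fix the filling gap $p_2$ so that the $\limsup$ in \eqref{eq:prop-filling} is below $\rho m_*/4$.
\item Only then take $p_1\gg p_2,N$ with $\nu(O')\ge m_*/2$, where $O'=\{y:A^E_{p_1}(f^{-p_1}y)<(c/2-\kappa/2)p_1\}$. This follows from Fatou, Lemma~\ref{lem:cone-bundle-comparison} and two-sided Oseledets, or from $O'=f^{p_1}O$ and invariance.
\item If $f^tx\in O'$ and $u\in\mathcal T_j(x)$ with $1\le u-t\le p_2$, then $[t-p_1,t)$ is a prefix of the suffix window $[t-p_1,u)$. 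The trivial estimate gives $A^E_{p_1}(f^{t-p_1}x)\ge\frac c2p_1-p_2\log\|Df\|_\infty$, which is a contradiction.
\item The $F$-part is analogous, using additive Birkhoff sums of $\psi^F_\ell$ and the carrier inequality. Discard $O(p_1)$ times per component of $\mathcal I_x$; this is $o(n_j)$ since $M_j\to\infty$.
\end{enumerate}

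\textbf{Comparison with the paper.} The paper's proof is the positive form of this argument and needs no bad sets or scale juggling. It encodes the suffix and carrier inequalities as closed conditions on backward orbits, namely the compact sets $H_R$. It shows $f^tx\in H_R$ for all but $o(n_j)$ times $t\in\mathcal T_j(x)$. It then applies the filling property on the compact sets $C_{R,p}=\bigcup_{s=1}^pf^{-s}H_R$ to get $\nu\bigl(\bigcup_{s\ge1}f^{-s}K\bigr)=1$ with $K=\bigcap_R H_R$. Finally it reads off the exponents from backward limits on $K$.
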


\begin{proof}
Passing to a subsequence if necessary, we may assume $\nu_j\weakstar\nu$.
Observe that for any function $\varphi\in C(\mathbf{M})$,
$$
 \left|\int\varphi\circ f\,d\nu_{j}
             -\int\varphi\,d\nu_{j}\right|
 \le\|\varphi\|_\infty\frac{\#\partial Q_j}{\#Q_j}.
$$
Using the facts \(\#Q_j\ge\rho n_j\) and \(\#\partial Q_j=o(n_j)\) provided in Proposition \ref{prop:hyperbolic-folner-selection}, we conclude that
the limit \(\nu\) is \(f\)-invariant. For every fixed \(m\ge1\),
trapping gives
\[
 \nu_j\bigl(\mathbf{M}\setminus f^m(\overline U)\bigr)
 \le\frac{m}{\#Q_j}\longrightarrow0.
\]
Since \(f^m(\overline U)\) is compact, we have
\(\nu(f^m(\overline U))=1\). Hence \(\nu(\Lambda)=1\).

For $R\ge N$, let $H_R$ denote the compact set of points $y\in\overline U$ satisfying:
\begin{itemize}
\item $f^{-s}y\in\overline U$ for all $0\le s\le R$;
\item for all $N\le r \le R$, it holds that
$$
A_r^E(f^{-r}y)\ge \frac{c}{2} r,\qquad \sum_{s=0}^{r-1}\psi_\ell^F(f^{-s}y)\ge\frac\eta2r,
$$
\end{itemize}
where we recall $\psi_\ell^F(w):=-\frac1\ell\log\|Df_w^\ell|_{\widetilde F(w)}\|$.
By definition, the sequence of sets $\{H_R\}_{R\ge N}$ is nested and decreasing. Define $K:=\bigcap_{R\ge N}H_R$. Clearly, $K\subset\bigcap_{R\ge N}f^R(\overline U)=\Lambda$. By Lemma~\ref{lem:cone-bundle-comparison}, the two-sided Oseledets theorem,
Birkhoff's ergodic theorem, and subadditivity along \(F\), we obtain
\[
 \lambda_E^-(y)=\lim_{r\to\infty}\frac1rA_r^E(f^{-r}y)\ge \frac{c}{2},
 \qquad
 \lambda_F^+(y)\le
 -\lim_{r\to\infty}\frac1r
       \sum_{s=0}^{r-1}\psi_\ell^F(f^{-s}y)\le -\frac{\eta}{2}
\]
for $\nu$-almost every $y\in K$. Thus, to establish \eqref{eq:selected-signs} and complete the proof, it suffices to show that
\begin{equation}\label{unionK}
\nu\left(\bigcup_{s\ge 1}f^{-s}K\right)=1.
\end{equation}

For $R\ge N$ and $p\ge 1$, we define
\[
 C_{R,p}:=\bigcup_{s=1}^pf^{-s}H_R,
 \qquad
 e_{j,R}:=\sup_{x\in E_j}\frac1{n_j}
 \#\{t\in\mathcal T_j(x):f^tx\notin H_R\}.
\]
We formulate the following claim, whose proof is deferred to the end.

\begin{claim*}
For all $R\ge N$ and $p\ge 1$,
$$
 \nu_j(C_{R,p}^c)
 \le\frac1\rho\left(
 \sup_{x\in E_j}\frac{\#(Q_j\setminus\mathcal T_j(x)^{[p]})}{n_j}
 +p e_{j,R}\right).
$$
\end{claim*}

Assuming this claim, we proceed to deduce \eqref{unionK} by first estimating $e_{j,R}$. By Lemma~\ref{lem:robust-suffix-conorm}, for every $t\in\mathcal T_j(x)$ with $t\ge R$, we have
\begin{equation}\label{eine}
A_r^E\left(f^{-r}(f^tx)\right)\ge \frac{c}{2} r,\qquad N\le r\le R.
\end{equation}
For the \(F\)-inequalities, let \(\mathcal I_x^{(j)}\) be the union of intervals defining \(G_{\eta,\ell,M_j}^F(x)\), and set $W_j(x):=\mathcal I_x^{(j)}\cap[0,n_j+1)$. By the construction of $\mathcal I_x^{(j)}$, all but at most two connected components of \(W_j(x)\) contain at least \(M_j\) integers. Hence, \(W_j(x)\) has at most $2+(n_j+1)/M_j$ components.

Let \(t\in\mathcal T_j(x)\), and let \(a\) be the first time in the component of \(W_j(x)\) containing \(t\). If \(t\ge a+R\), then the integer $i=t-r+1$ belongs to the same component for all \(N\le r\le R\). Since \(t\in G_{\eta,\ell,M_j}^F(x)\), the defining inequality of this set gives
\[
 \sum_{s=0}^{r-1}\psi_\ell^F(f^{-s}(f^tx))
 =
 \sum_{i=t-r+1}^{t}\psi_\ell^F(f^ix)
 \ge \frac{\eta}{2}r,
 \qquad N\le r\le R.
\]
Combining this with \eqref{eine} for \(t\ge R\), we see that
\[
 \#\{t\in\mathcal T_j(x):f^tx\notin H_R\}
 \le R+R\left(2+\frac{n_j+1}{M_j}\right).
\]
Consequently, for every fixed \(R\ge N\),
\begin{equation*}
 e_{j,R}\le
 \frac{R}{n_j}\left(3+\frac{n_j+1}{M_j}\right)
 \longrightarrow 0 \quad \text{as } j\to\infty,
\end{equation*}
since both \(n_j\to\infty\) and \(M_j\to\infty\). Since \(C_{R,p}\) is compact, it follows from the claim that
\[
 \nu(C_{R,p})
 \ge\limsup_{j\to\infty}\nu_j(C_{R,p})
 \ge
 1-\frac1\rho\limsup_{j\to\infty}
 \sup_{x\in E_j}
 \frac{\#(Q_j\setminus\mathcal T_j(x)^{[p]})}{n_j}.
\]
Observing that $\bigcap_{R\ge N}C_{R,p}=\bigcup_{s=1}^pf^{-s}K$ and invoking \eqref{eq:prop-filling}, we conclude
\[
\begin{aligned}
\nu\left(\bigcup_{s\ge 1}f^{-s}K\right)
&=\lim_{p\to \infty}\nu\left(\bigcap_{R\ge N}C_{R,p}\right)
 =\lim_{p\to \infty}\lim_{R\to \infty}\nu(C_{R,p})\\
&\ge 1-\frac{1}{\rho}\lim_{p\to \infty}\limsup_{j\to \infty} \sup_{x\in E_j}\frac{\#(Q_j\setminus \mathcal T_j(x)^{[p]})}{n_j}\\
&=1,
\end{aligned}
\]
which proves \eqref{unionK}.

\medskip

\noindent\emph{Proof of the claim.}
Fix \(x\in E_j\). Suppose that
\(t\in\mathcal T_j(x)^{[p]}\) and \(f^tx\notin C_{R,p}\).
By the definition of the filling, there exist
\(a,u\in\mathcal T_j(x)\) such that
\(a\le t<u\) and \(u-a\le p\). Hence \(1\le u-t\le p\).
If \(f^ux\in H_R\), then
\(
 f^tx\in f^{-(u-t)}H_R\subset C_{R,p},
\)
a contradiction. Thus \(f^ux\notin H_R\), and consequently
\[
 \bigl\{t\in\mathcal T_j(x)^{[p]}:
              f^tx\notin C_{R,p}\bigr\}
 \subset
 \bigcup_{\substack{u\in\mathcal T_j(x)\\
                    f^ux\notin H_R}}
 [u-p,u)_{\mathbb Z}.
\]
Therefore,
\[
 \begin{aligned}
 \#\{t\in Q_j:f^tx\notin C_{R,p}\}
 &\le
 \#\bigl(Q_j\setminus\mathcal T_j(x)^{[p]}\bigr)
 +p\,\#\{u\in\mathcal T_j(x):f^ux\notin H_R\}\\
 &\le
 \#\bigl(Q_j\setminus\mathcal T_j(x)^{[p]}\bigr)
 +pn_j e_{j,R}.
 \end{aligned}
\]
Integrating with respect to \(m_j\) and using
\(\#Q_j\ge\rho n_j\), we conclude that
\[
 \begin{aligned}
 \nu_j(C_{R,p}^{c})
 &=\frac1{\#Q_j}
   \int_{E_j}
   \#\{t\in Q_j:f^tx\notin C_{R,p}\}\,dm_j(x)\\
 &\le\frac1\rho
 \left(
   \sup_{x\in E_j}
   \frac{\#\bigl(Q_j\setminus\mathcal T_j(x)^{[p]}\bigr)}{n_j}
   +p e_{j,R}
 \right),
 \end{aligned}
\]
as claimed.
\end{proof}

\begin{lemma}\label{lem:selected-entropy}
Every weak limit $\nu$ of $\{\nu_j\}$ is a hyperbolic SRB measure satisfying the Pesin entropy formula:
$$
 h_\nu(f)=\int_{\Lambda}\sum_{\lambda_i(x)>0}\lambda_i(x)\,d\nu=\int_\Lambda\log\Jac(Df|_E)\,d\nu.
$$
\end{lemma}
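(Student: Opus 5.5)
By Lemma~\ref{lem:selected-hyperbolicity}, $\nu$ is invariant, supported on $\Lambda$, and satisfies $\lambda_E^-\ge c/2$ and $\lambda_F^+\le -\eta/2$ almost everywhere. Hence $\nu$ is hyperbolic, and its positive exponents are exactly those along $E$. Ruelle's inequality therefore gives
\[
h_\nu(f)\le\int\log\Jac(Df|_E)\,d\nu .
\]
The plan is to prove the reverse inequality by the variational (Burguet--Yang) approach, using the multi-component Gibbs estimate Theorem~\ref{thm:multi-gibbs}. Once equality holds, the Ledrappier--Young characterization for $C^{1+\alpha}$ diffeomorphisms shows that $\nu$ has absolutely continuous conditionals on Pesin unstable manifolds (tangent to $E$), so $\nu$ is SRB.

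For the lower bound, fix $\varepsilon>0$ and take $\delta$, $C$ from Theorem~\ref{thm:multi-gibbs}, applied with the parameters $c,L,N,r_*$ and the disk $D$. Choose a finite partition $\cP$ with $\diam\cP<\delta$ and $\nu(\partial\cP)=0$. Every $x\in E_j$ satisfies $\partial Q_j\subset\mathcal T_j(x)\subset S^E_{c,L,N}(x)$ and $d_D(x,\partial D)\ge r_*$, so $E_j$ lies inside the set $\Gamma$ of Theorem~\ref{thm:multi-gibbs} with $Q=Q_j$. Consequently, for each atom $P$ of $\cP^{Q_j}$ meeting $E_j$,
\[
m_j(P)\le \Leb_D(E_j)^{-1}C^{\#\partial Q_j}\exp\Bigl(\varepsilon\#Q_j-J_{Q_j}(x)\Bigr), \qquad x\in P .
\]
Taking $-\log$ and integrating against $m_j$ yields
\[
H_{m_j}(\cP^{Q_j})\ge \int J_{Q_j}\,dm_j-\varepsilon\#Q_j-\#\partial Q_j\log C+\log\Leb_D(E_j).
\]
The last two terms are $o(n_j)$ by Proposition~\ref{prop:hyperbolic-folner-selection}(i),(ii).

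To pass to the limit in the Jacobian term, I would use that the tangent planes $T_{f^tx}f^tD$ lie in the cone and converge exponentially toward $E$. By the continuity argument of Lemma~\ref{lem:jacobian-comparison}, $J_t(f^tx)$ differs from a fixed continuous function $g_m$ (which approximates $\log\Jac(Df|_{\widetilde E})$) by at most $\varepsilon$. Hence
\[
\frac{1}{\#Q_j}\int J_{Q_j}\,dm_j \to \int\log\Jac(Df|_E)\,d\nu ,
\]
up to an error of order $\varepsilon$. For the entropy side I would use the standard Misiurewicz-type subadditivity for unions of intervals, as in Burguet--Yang. Fix $r$ and cut each component of $Q_j$ into blocks of length $r$; the boundary losses are $O(\#\partial Q_j\cdot r\log\#\cP)=o(n_j)$. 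Concavity of entropy then gives
\[
\frac1{\#Q_j}H_{m_j}(\cP^{Q_j})\le \frac1r H_{\nu_j}(\cP^r)+o(1).
\]
Letting $j\to\infty$ (allowed since $\nu(\partial\cP)=0$), then $r\to\infty$ and $\varepsilon\to0$, gives $h_\nu(f,\cP)\ge\int\log\Jac(Df|_E)\,d\nu$.

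The main obstacle is this block decomposition when $Q_j$ is a union of possibly many intervals, and in particular intervals shorter than $r$. The count $\#\partial Q_j=o(n_j)$ is exactly what is needed to absorb the offset terms. I would average over the $r$ possible phases of the block grid within each component and bound the contribution of short components by $r\log\#\cP$ each. A secondary point is that $\nu_j$ is built from $Q_j$ rather than $[0,n_j)$, but this only affects the normalization by $\#Q_j\ge\rho n_j$.
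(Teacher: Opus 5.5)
Your proposal is correct and follows the same architecture as the paper's proof. Ruelle's inequality plus Ledrappier--Young reduce everything to the lower bound. Theorem~\ref{thm:multi-gibbs}, integrated against $m_j$, bounds $H_{m_j}(\cP^{Q_j})$ from below. A Misiurewicz-type block decomposition of the components of $Q_j$ then transfers this bound to $\frac1rH_{\nu_j}(\cP^r)$, with error $O(r\log\#\cP\cdot\#\partial Q_j/\#Q_j)$. The paper imports that last step from Burguet's proof of Lemma~5, and your phase-averaging argument reproduces it with the same error.

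The only step you argue differently is the limit of the Jacobian term. The paper lifts $\nu_j$ to the Grassmann bundle and observes that any limit $\widehat\nu$ is $\widehat f$-invariant and carried by cone planes over $\Lambda$. Domination then gives $\int d_{\Gr}(P,E(y))\,d\widehat\nu\le A\tau^t\to0$, which yields the limit exactly.

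You instead recycle $g_m$ from Lemma~\ref{lem:jacobian-comparison}. This works, but the bookkeeping should be stated precisely. The lemma gives $|J_t(f^tx)-g_m(f^{t-m}x)|<\varepsilon/3$ for $t\ge m$. So the relevant continuous integrand is $g_m\circ f^{-m}$ on $f^m(\overline U)$, which carries all of $\nu_j$ except mass at most $m/\#Q_j$. On $\Lambda$ one has $g_m\circ f^{-m}=\log\Jac(Df|_E)$ by invariance of $E$.

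Two of your phrases are not accurate off $\Lambda$, where $\widetilde E$ is not invariant: that the tangent planes ``converge exponentially toward $E$'', and that $g_m$ ``approximates $\log\Jac(Df|_{\widetilde E})$''. This is harmless, because $\nu(\Lambda)=1$ and so only the values on $\Lambda$ survive in the limit. Your route avoids lifted measures at the cost of an $\varepsilon$-error that disappears as $\varepsilon\to0$; the paper's route yields the exact limit directly.
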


\begin{proof}
Without loss of generality, we assume $\nu_j\weakstar\nu$. By Lemma \ref{lem:selected-hyperbolicity}, we know $\nu$ is a hyperbolic measure supported on $\Lambda$, which admits positive Lyapunov exponents along $E$, and negative Lyapunov exponents along $F$. Thus, by Ruelle's inequality \cite{Ruelle} we get
$$
h_\nu(f)\le \int_{\Lambda}\sum_{\lambda_i(x)>0}\lambda_i(x)\,d\nu=\int_\Lambda\log\Jac(Df|_E)\,d\nu.
$$
By the result of Ledrappier--Young \cite[Theorem A]{LedrappierYoung}, we need only to show
$$
h_\nu(f)\ge \int_\Lambda\log\Jac(Df|_E)\,d\nu.
$$

Fix $\varepsilon>0$ and a finite Borel partition $\cP$ with
$\nu(\partial\cP)=0$ and sufficiently small diameter for
Theorem~\ref{thm:multi-gibbs}. Every point of $E_j$ satisfies the
boundary-distance and suffix-time conditions of that theorem. Thus,
for $x\in E_j$,
\[
 m_j(\cP^{Q_j}(x))
 \le\frac{C^{\#\partial Q_j}
             e^{\varepsilon\#Q_j-J_{Q_j}(x)}}{\Leb_D(E_j)}.
\]
Integrating the negative logarithm gives
\begin{equation}\label{eq:selected-partition-entropy}
 H_{m_j}(\cP^{Q_j})
 \ge\int J_{Q_j}\,dm_j-\varepsilon\#Q_j
       -\#\partial Q_j\log C+\log\Leb_D(E_j).
\end{equation}
Combining \eqref{eq:selected-partition-entropy} with the
entropy estimate in \cite[proof of Lemma~5]{BurguetSurface},
we obtain, for every integer \(r\ge1\),
\begin{align}
 \frac1rH_{\nu_j}(\cP^r)
 \ge{}&
 \frac1{\#Q_j}\int J_{Q_j}\,dm_j-\varepsilon
 +\frac{\log\Leb_D(E_j)}{\#Q_j}\notag\\
 &-\bigl(\log C+3r\log\#\cP\bigr)
   \frac{\#\partial Q_j}{\#Q_j}.
 \label{eq:selected-entropy-comparison}
\end{align}

Now we show
\begin{equation}\label{eq:selected-Jacobian-limit}
 \lim_{j\to\infty}\frac1{\#Q_j}\int J_{Q_j}\,dm_j
 =\int_\Lambda\log\Jac(Df|_E)\,d\nu.
\end{equation}
To this end, consider the lifted measures
\[
 \widehat\nu_j:=\frac1{\#Q_j}\int_{E_j}
       \sum_{t\in Q_j}\delta_{(f^tx,T_{f^tx}f^tD)}\,dm_j(x),
 \qquad
 \widehat f(y,P):=(fy,Df_yP).
\]
The boundary estimate shows that every weak limit $\widehat\nu$ is
$\widehat f$-invariant and projects to $\nu$.
By cone invariance and Lemma~\ref{lem:selected-hyperbolicity},
it is supported on cone planes over $\Lambda$. Domination and invariance of $E$ on $\Lambda$
give constants $A\ge1$ and $\tau\in(0,1)$ such that
\[
 d_{\Gr}(Df_y^tP,E(f^ty))\le A\tau^t,
 \quad
y\in\Lambda,\ P\subset\cC_\theta^E(y),\ \dim P=\dim E.
\]
Consequently,
\[
 \int d_{\Gr}(P,E(y))\,d\widehat\nu(y,P)
 =\int d_{\Gr}(Df_y^tP,E(f^ty))\,d\widehat\nu(y,P)
 \le A\tau^t\longrightarrow0.
\]
Thus every such limit is supported on the section $P=E(y)$.
Continuity of $(y,P)\mapsto\log\Jac(Df_y|_P)$ now yields
\eqref{eq:selected-Jacobian-limit}.

For fixed $r$, the last two terms in
\eqref{eq:selected-entropy-comparison} tend to zero because
\[
 \#Q_j\ge\rho n_j,\qquad
 \#\partial Q_j=o(n_j),\qquad
 \log\Leb_D(E_j)=o(n_j),
\]
by applying Proposition~\ref{prop:hyperbolic-folner-selection}.
By invariance of $\nu$ and $\nu(\partial\cP)=0$, one has that
$H_{\nu_j}(\cP^r)\to H_\nu(\cP^r)$.
Letting $j\to\infty$ and then $r\to\infty$ in
\eqref{eq:selected-entropy-comparison}, we obtain
\[
 h_\nu(f)\ge h_\nu(f,\cP)
 \ge\int_\Lambda\log\Jac(Df|_E)\,d\nu-\varepsilon.
\]
Finally let $\varepsilon\downarrow0$.
\end{proof}

We close this subsection by proving Theorem \ref{thm:lvl-basin-production}.

\begin{proof}[Proof of Theorem~\ref{thm:lvl-basin-production}]
Define
\[
 \beta_j:=\frac1{\#Q_j}\int_{E_j}
 \sum_{t\in Q_j\cap\mathcal T_j(x)}\delta_{f^tx}\,dm_j(x).
\]
By Proposition~\ref{prop:hyperbolic-folner-selection},
\[
 0\le\beta_j\le\nu_j,
 \qquad
 \beta_j(\mathbf{M})\ge\frac{\rho n_j}{\#Q_j}\ge\rho.
\]
Passing to a joint subsequence, assume that
\(\nu_j\weakstar\nu\) and \(\beta_j\weakstar\beta\).
Then \(0<\beta\le\nu\). By
Lemmas~\ref{lem:selected-hyperbolicity} and~\ref{lem:selected-entropy},
\(\nu\) is a hyperbolic SRB measure supported on \(\Lambda\).
Almost every ergodic component of \(\nu\) is a hyperbolic SRB measure.
Since there are
at most countably many such measures, we can write
\[
 \nu=\sum_iw_i\mu_i,\qquad
 \sum_iw_i=1,\qquad
 \lambda_E^-(\mu_i)\ge c/2,\quad
 \lambda_F^+(\mu_i)\le-\eta/2.
\]
The basins \(\Basin(\mu_i)\) are pairwise disjoint and have full
\(\nu\)-measure in their union. Hence some \(\mu_*=\mu_i\) satisfies
\[
 0<\beta_*:=\beta|_{\Basin(\mu_*)}
 \le\nu|_{\Basin(\mu_*)}=w_i\mu_*.
\]

\smallskip
Choose a compact Pesin block \(K_*\) with \(\beta_*(K_*)>0\), and let
\(V\) be a sufficiently small neighborhood of a point in
\(\supp(\beta_*|_{K_*})\). Then
\(\beta_*(K_*\cap V)>0\) and, since \(\beta_*\ll\mu_*\),
\(\mu_*(K_*\cap V)>0\).
The SRB property and absolute continuity of the stable lamination
on \(K_*\), with uniform Jacobian bounds, give a constant \(c_*>0\)
such that
\begin{equation}\label{eq:lvl-terminal-basin-fraction}
 \frac{\Leb_\Sigma(\Sigma\cap\Basin(\mu_*))}{\Leb_\Sigma(\Sigma)}
 \ge c_*
\end{equation}
for every terminal disk \(\Sigma=B_{f^tD}(w,R_N)\) from the preball
construction with \(w\in V\); see \cite{Pesin,PesinSinai}.
Here \(V\) is chosen small relative to \(R_N\) and the local stable
manifold size, and the bound is uniform by
Lemma~\ref{tangent-control} and \eqref{eq:local-graph-volume}.

\smallskip
Let \(\mathscr L=\mathscr L(c,D)\ge1\) be the constant from
Corollary~\ref{prop:bounded-distortion}.
By \eqref{eq:uniform-density}, choose \(T\ge b_*(r_*,N)\) such that
\begin{equation}\label{eq:lvl-density-threshold}
 \frac{\Leb_D(V_t^D(x)\setminus\Gamma)}{\Leb_D(V_t^D(x))}
 <\frac{c_*}{2\mathscr L^2}
 \qquad \text{for every }
x\in\Gamma^*,\ t\in S_{c,L,N}^E(x),\ t\ge T.
\end{equation}
Removing the terms with \(t<T\) from \(\beta_j\) changes its mass
by at most \(T/\#Q_j\to0\). Since \(V\) is open,
it follows that
\[
 \liminf_{j\to\infty}\frac1{\#Q_j}\int_{E_j}
 \#\{t\in Q_j\cap\mathcal T_j(x):t\ge T,\ f^tx\in V\}
 \,dm_j(x)
 \ge\beta(V)\ge\beta_*(V)>0.
\]
Thus there exist \(j\), \(x\in E_j\subset\Gamma^*\), and
\(t\in Q_j\cap\mathcal T_j(x)\) with \(t\ge T\) and \(f^tx\in V\).
Set
\[
 P:=V_t^D(x),\qquad
 \Sigma:=f^tP=B_{f^tD}(f^tx,R_N).
\]
Since \(t\in S_{c,L,N}^E(x)\) and \(R_N<\rho_N\), bounded
distortion and change of variables give
\[
 \frac{\Leb_D(P\cap\Basin(\mu_*))}{\Leb_D(P)}
 \ge\frac1{\mathscr L^2}
      \frac{\Leb_\Sigma(\Sigma\cap\Basin(\mu_*))}{\Leb_\Sigma(\Sigma)}
 \ge\frac{c_*}{\mathscr L^2},
\]
where we used basin invariance and
\eqref{eq:lvl-terminal-basin-fraction}.
Combining this with \eqref{eq:lvl-density-threshold}, we obtain
\[
 \begin{aligned}
 \frac{\Leb_D(P\cap\Gamma\cap\Basin(\mu_*))}{\Leb_D(P)}
 &\ge
 \frac{\Leb_D(P\cap\Basin(\mu_*))}{\Leb_D(P)}
 -\frac{\Leb_D(P\setminus\Gamma)}{\Leb_D(P)}\\
 &>\frac{c_*}{2\mathscr L^2}>0.
 \end{aligned}
\]
Thus \(\Leb_D(\Gamma\cap\Basin(\mu_*))>0\), as required.
\end{proof}

\section{Examples and robustness}
\label{sec:examples}

In this section, we first construct a connected attracting set with two \(u\)-definite
center subbundles of dimension two and prove that \(u\)-definiteness
persists under perturbations. Then we discuss physical measures with
distinct unstable indices in Remark~\ref{rem:distinct-indices}.

\subsection{A two-center skew product}

\begin{proposition}
\label{thm:multiple-center-subbundles-example}
There exist a \(C^\infty\) diffeomorphism \(F\) of
\(\mathbf{M}=\mathbb T^2\times\mathbb S^2\times\mathbb S^2\) and a connected
attracting set \(\Lambda\), with trapping region \(U\), admitting a
partially hyperbolic splitting
$$
 T_\Lambda \mathbf{M}=E^u\oplus_{\succ} C_1\oplus_{\succ} C_2\oplus_{\succ} E^s,
 \qquad \dim C_1=\dim C_2=2.
$$
Each \(C_i\) is \(u\)-definite, is neither uniformly expanding nor
uniformly contracting, and admits no nontrivial dominated refinement.
There are exactly two ergodic Gibbs \(u\)-states \(\mu_-,\mu_+\): all
center exponents of \(\mu_-\) are negative, whereas those of \(\mu_+\)
are positive along \(C_1\) and negative along \(C_2\). Thus
\(C_1\oplus C_2\) is not \(u\)-definite. Moreover, \(\mu_-\) is the
unique physical measure supported on \(\Lambda\), and
\[
 \Leb\bigl(B(\Lambda)\setminus\Basin(\mu_-)\bigr)=0.
\]
\end{proposition}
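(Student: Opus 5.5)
I would build $F$ as a skew product over a linear Anosov map $A$ of $\mathbb T^2$ with eigenvalues $\lambda>1>\lambda^{-1}$, namely
\[
F(x,p,q)=\bigl(Ax,\ g_x(p),\ h_x(q)\bigr),
\]
where $g_x,h_x$ are diffeomorphisms of $\mathbb S^2$ depending smoothly on $x$. The fiber maps are chosen to be "Bowen-eye-free" versions of north--south dynamics. For $g$ I take a map with an attracting fixed point $N_1$ whose derivative is a contracting rotation-type (complex) linear part, and a repelling fixed point $S_1$ with expanding complex linear part; the complex eigenvalues rule out a dominated refinement of $C_1$. For $h$ I take a map with an attracting fixed point $N_2$ with complex contracting linear part, and the repeller $S_2$ outside the attracting set. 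I would let the $x$-dependence occur only through a small mixing perturbation, and arrange the coupling so that the product of the fiber derivatives is dominated by $A$'s contraction and expansion. Concretely, the fiber norms are chosen strictly between $\lambda^{-1}$ and $\lambda$, with $C_1$ dominating $C_2$: the rates of $g$ at $N_1,S_1$ lie above the rates of $h$ at $N_2$. The trapping region is
\[
U=\mathbb T^2\times \mathbb S^2\times B(N_2,r).
\]
The set $\Lambda=\bigcap_n F^n(U)$ is then $\mathbb T^2\times \mathbb S^2\times\{N_2\}$ if the $N_2$-section is kept invariant, and it is connected.

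The first step is to verify the splitting with domination. Here $E^u,E^s$ come from $A$, $C_1=T\mathbb S^2$ in the middle factor, and $C_2=T\mathbb S^2$ at $N_2$ in the last factor. I check this with cone fields, using the rate inequalities above. Since $h$ contracts uniformly near $N_2$ but by a rate weaker than $\lambda^{-1}$, $C_2$ is not uniformly hyperbolic as a whole only if its contraction varies. To get "neither uniformly expanding nor contracting" for $C_2$, I would instead allow the $N_2$-section dynamics to have a slightly expanding region over a small disk in $\mathbb T^2$. This is a Bonatti--Viana type deformation that keeps every invariant measure's $C_2$-exponents negative on average.

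The second step is to classify the Gibbs $u$-states. A Gibbs $u$-state projects to Lebesgue on $\mathbb T^2$, because its disintegration along $E^u$ is absolutely continuous. In the $\mathbb S^2$ fiber of $g$, the invariant graphs are the sections $\{N_1\}$ and $\{S_1\}$, which are the fiber attractor and repeller. So the ergodic Gibbs $u$-states are $\mu_-=\Leb\times\delta_{N_1}\times\delta_{N_2}$ and $\mu_+=\Leb\times\delta_{S_1}\times\delta_{N_2}$. Showing there are no others is the main obstacle. I would use that the fiber dynamics is uniformly north--south over every base point, so any invariant measure gives full mass to these two sections. The center exponents are then integrals of fiber log-derivatives against Lebesgue on the base. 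Along $C_1$ they are negative for $\mu_-$ and positive for $\mu_+$; along $C_2$ they are negative for both. This gives $u$-definiteness of each $C_i$, shows that $C_1\oplus C_2$ is not $u$-definite, and gives the absence of domination inside each $C_i$ through the complex eigenvalues.

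Finally, by \cref{thm:A} and \cref{prop:basin-covering-flag}, Lebesgue almost every point lies in the basin of a hyperbolic SRB measure. Only $\mu_-$ can be such a measure: $\mu_+$ has unstable dimension $3$, while its conditional measures along the $E^u\oplus C_1$ leaves are atomic in the $C_1$ direction, so it is not SRB. Hence
\[
\Leb\bigl(B(\Lambda)\setminus \Basin(\mu_-)\bigr)=0,
\]
and $\mu_-$ is the unique physical measure on $\Lambda$.
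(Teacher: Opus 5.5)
\textbf{Gap: domination and non-uniformity of $C_2$ are incompatible in your construction.} With $F(x,p,q)=(Ax,g_x(p),h_x(q))$, you cannot have both $C_1\succ C_2$ dominated and $C_2$ not uniformly contracting. Since $h_x$ does not depend on $p$ and $U$ puts no restriction on $p$, a point $(x,p,q)$ lies in $\Lambda$ if and only if $(x,N_1,q)$ does. Moreover, $DF^n|_{C_2}=Dh^n_x(q)$ is the same at both points. At $(x,N_1,q)$, the fiberwise attracting point $N_1$ gives $m(DF^n|_{C_1})\le\|Dg^n_x(N_1)\|<1$. Domination then yields $\|DF^n|_{C_2}\|\le C\tau^n$ there, and hence on all of $\Lambda$, so $C_2$ is uniformly contracting. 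Your own phrase, that the deformation ``keeps every invariant measure's $C_2$-exponents negative,'' is already equivalent to uniform contraction. And if your expanding disk contains a fixed point $x_*$ of $A$ (the natural witness for non-uniformity), the fixed point $(x_*,N_1,N_2)$ has $C_1$ contracting and $C_2$ expanding, which violates domination outright.

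The missing idea is to couple the last fiber to the middle one, which is what the paper does. Its second fiber map $g_{x,y}$ depends on the middle-sphere coordinate $y$ through the bump $\chi(x)\rho(y)$, where $\supp\rho\subset\{m(Dh_y)>4\}$ lies near the source $p_+$ and $\|D_zg_{x,y}\|\le 2$. This design gives three things:
\begin{itemize}
\item the one-step gap $\|D_zg_{x,y}\|/m(Dh_y)\le 1/2$ everywhere, hence domination;
\item expansion by $2$ along $C_2$ at $q_+=(x_*,p_+,0)$, hence non-uniformity;
\item since $\int\chi\,dm_A<1/10$, a still negative $C_2$-exponent $-\kappa$ for $\mu_+$.
\end{itemize}

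\textbf{Further points.} Once $C_2$ expands somewhere, $\Lambda$ is strictly larger than the section $\{q=N_2\}$. So your classification of Gibbs $u$-states must also show that every invariant measure on $\Lambda$ projecting to $m_A$ is carried by that section. You currently take this for granted. The paper proves it with the fiber estimate $|z_n|\le |z|\exp\bigl(-n\log16+\log32\sum_{j<n}\chi(A^{Nj}x)\bigr)$ together with Birkhoff's theorem for $m_A$.

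Your final step deduces basin covering from Theorem~\ref{thm:A}, using that $\mu_+$ has three positive exponents but atomic conditionals in the $C_1$ direction, so it is not SRB. This is a valid alternative to the paper's direct argument, in which orbits with $y\ne p_+$ leave $\supp\rho$ and are then contracted by $1/16$ in the second fiber. To complete it, add that the basin of any physical measure supported on $\Lambda$ lies in $B(\Lambda)$. But this step can only be run once the construction is corrected.
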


\begin{proof}
We add a second fiber to the Anosov--north--south product in
\cite{CaoMiYang}, choosing the fiber maps to ensure domination between
the center subbundles.

\smallskip
\noindent\emph{The fiber maps.}
Let \(A\colon\mathbb T^2\to\mathbb T^2\) be a mixing linear Anosov
automorphism with fixed point \(x_*=0\), and let \(m_A\) be normalized
Haar measure. Fix \(\theta/\pi\notin\mathbb Q\). Identify the first
sphere with \(\mathbb C\cup\{\infty\}\) and set
\[
 h(w)=8e^{i\theta}w,
 \qquad p_+=0,\quad p_-=\infty.
\]
Then \(p_+\) is a source and \(p_-\) is a sink. In the round metric,
\begin{equation}\label{eq:example-north-south}
 m(Dh_w)=\|Dh_w\|
 =\frac{8(1+|w|^2)}{1+64|w|^2},
 \qquad \frac18\le m(Dh_w)\le8.
\end{equation}
Choose smooth functions \(\chi\colon\mathbb T^2\to[0,1]\) and
\(\rho\colon\mathbb S^2\to[0,1]\) such that
\[
 \chi(x_*)=1,\qquad \int\chi\,dm_A<\frac1{10},
 \qquad
 \rho=1\text{ near }p_+,
 \qquad
 \supp\rho\subset\{y:m(Dh_y)>4\}.
\]
In particular, \(\rho=0\) near \(p_-\).

On the second sphere, use a complex coordinate \(z\) and let
\(D=\{|z|<1\}\). Choose a metric that is Euclidean near \(\overline D\)
and a smooth nonincreasing function
\(\omega\colon[0,\infty)\to[0,1]\), equal to one near zero and to
zero on \([1/2,\infty)\). For each \((x,y)\), let \(g_{x,y}\) be the
time-one map of the vector field
\begin{equation*}
 X_{x,y}(z)
 =\bigl[-\log16+\log32\,\chi(x)\rho(y)\omega(|z|^2)\bigr]z
   +i\theta z.
\end{equation*}
Outside a compact subset of the chart, this vector field equals
\((-\log16+i\theta)z\), so it extends smoothly across infinity.
Thus the maps \(g_{x,y}\) form a smooth family of sphere
diffeomorphisms fixing zero. The radial component points strictly inward near
\(\partial D\), giving \(g_{x,y}(\overline D)\subset D\).

Since \(\omega'\le0\), for every real tangent vector \(v\),
\[
 \langle D_zX_{x,y}v,v\rangle
 \le\bigl[-\log16+\log32\,\chi(x)\rho(y)\bigr]\|v\|^2.
\]
As \(\overline D\) is forward invariant under the flow, the
variational equation gives
\begin{equation}\label{eq:example-fiber-bound}
 \|D_zg_{x,y}\|
 \le \exp\bigl(-\log16+\log32\,\chi(x)\rho(y)\bigr)
 \le2
 \qquad(z\in\overline D).
\end{equation}
If \(\rho(y)=0\), then \(g_{x,y}|_D=16^{-1}R_\theta\), where
\(R_\theta\) denotes rotation through angle \(\theta\). Hence
\begin{equation}\label{eq:example-center-gap}
 \frac{\|D_zg_{x,y}\|}{m(Dh_y)}\le\frac12
 \qquad(x\in\mathbb T^2,\ y\in\mathbb S^2,\ z\in\overline D).
\end{equation}
Indeed, on \(\supp\rho\) the numerator is at most two and the
denominator exceeds four; outside \(\supp\rho\), the bound follows
from \eqref{eq:example-north-south} and \(\|D_zg_{x,y}\|=1/16\).

\smallskip
\noindent\emph{The attracting set and the splitting.}
For a sufficiently large integer \(N\), define
\begin{equation*}
 F(x,y,z)=\bigl(A^Nx,h(y),g_{x,y}(z)\bigr)
 \quad\text{on}\quad
 \mathbf{M}=\mathbb T^2\times\mathbb S^2\times\mathbb S^2.
\end{equation*}
This is a smooth diffeomorphism. Set
\[
 U=\mathbb T^2\times\mathbb S^2\times D,
 \qquad
 \Lambda=\bigcap_{n\ge0}F^n(\overline U).
\]
Then \(F(\overline U)\subset U\), and the sets
\(F^n(\overline U)\) are compact, connected and nested. Thus
\(\Lambda\) is a connected attracting set. Since each \(g_{x,y}\)
fixes zero,
\[
 \mathbb T^2\times\mathbb S^2\times\{0\}\subset\Lambda.
\]

Choose \(N\) so that the Anosov rates dominate the derivatives and
inverse derivatives of the fiber maps. The graph transform gives a
global partially hyperbolic splitting with one-dimensional strong
subbundles and center tangent to the sphere factors; see \cite{CP}.
On \(\Lambda\), let \(C_2\) be tangent to the second sphere. The
center derivative has the triangular form
\[
 \begin{pmatrix}
 Dh_y&0\\
 D_yg_{x,y}(z)&D_zg_{x,y}
 \end{pmatrix}.
\]
By \eqref{eq:example-center-gap}, the graph transform on complements
of \(C_2\) contracts by a factor at most \(1/2\). Its invariant
graph defines \(C_1\), and projection onto the first sphere
conjugates \(DF|_{C_1}\) to \(Dh\). Iterating
\eqref{eq:example-center-gap} and using the uniformly bounded graph
identifications gives the dominated splitting 
$$
 E^u\oplus_{\succ} C_1\oplus_{\succ} C_2\oplus_{\succ} E^s,
\qquad (\dim E^u,\dim C_1,\dim C_2,\dim E^s)=(1,2,2,1).
$$

\smallskip
\noindent\emph{Gibbs \(u\)-states and their Lyapunov exponents.}
Let \(\nu\) be a Gibbs \(u\)-state supported on \(\Lambda\).
Projection \(\pi_x\) onto the Anosov factor maps strong unstable
plaques locally diffeomorphically to unstable plaques of \(A^N\).
Pushing forward the absolutely continuous conditional measures and
disintegrating over base plaques shows that \((\pi_x)_*\nu\) is a
Gibbs \(u\)-state of \(A^N\). By uniqueness for a mixing Anosov
automorphism \cite{Bowen75},
\[
 (\pi_x)_*\nu=m_A.
\]
Set
\begin{equation*}
 \kappa=\log16-\log32\int\chi\,dm_A>0.
\end{equation*}
Then \eqref{eq:example-fiber-bound} gives
\begin{equation}\label{eq:example-negative-gibbs-average}
 \int\log\|DF|_{C_2}\|\,d\nu
 \le-\log16+\log32\int\chi\,dm_A=-\kappa.
\end{equation}

In fact, every invariant measure \(\nu\) on \(\Lambda\) projecting
to \(m_A\) is supported on \(z=0\). Write
\(F^n(x,y,z)=(A^{Nn}x,h^ny,z_n)\). Since \(g_{x,y}(0)=0\),
\eqref{eq:example-fiber-bound} implies
\[
 |z_n|
 \le |z|\exp\left(
   -n\log16+\log32\sum_{j=0}^{n-1}\chi(A^{Nj}x)
 \right).
\]
Birkhoff's ergodic theorem and \((\pi_x)_*\nu=m_A\) give \(z_n\to0\)
for \(\nu\)-almost every point. Invariance and dominated convergence
then yield \(\int|z|^2\,d\nu=\int|z_n|^2\,d\nu\to0\).

Since \(p_-\) and \(p_+\) are the only recurrent points of \(h\),
the only possible ergodic Gibbs \(u\)-states on \(\Lambda\) are
\begin{equation}\label{eq:example-two-gibbs-states}
 \mu_-=m_A\times\delta_{p_-}\times\delta_0,
 \qquad
 \mu_+=m_A\times\delta_{p_+}\times\delta_0.
\end{equation}
The strong unstable plaques in each supporting torus are those of
the Anosov factor, so both measures are Gibbs \(u\)-states.
At \(z=0\),
\[
 D_zg_{x,y}(0)
 =\exp\bigl(-\log16+\log32\,\chi(x)\rho(y)\bigr)R_\theta.
\]
Since \(Dh\) and \(D_zg_{x,y}(0)\) are conformal, the two
exponents along each center subbundle coincide. Their values are
\begin{equation*}
 \begin{array}{c|cc}
       &C_1&C_2\\ \hline
 \mu_- &-\log8&-\log16\\
 \mu_+ & \log8&-\kappa
 \end{array}.
\end{equation*}
Thus both center subbundles are \(u\)-definite, whereas
\(C_1\oplus C_2\) is not.

\smallskip
\noindent\emph{Nonuniformity and absence of dominated refinements.}
The fixed points \(q_\pm=(x_*,p_\pm,0)\) belong to \(\Lambda\),
and their center derivatives are
\begin{equation*}
 \begin{aligned}
 DF_{q_-}|_{C_1}&=8^{-1}R_{-\theta},
 &DF_{q_-}|_{C_2}&=16^{-1}R_\theta,\\
 DF_{q_+}|_{C_1}&=8R_\theta,
 &DF_{q_+}|_{C_2}&=2R_\theta.
 \end{aligned}
\end{equation*}
These fixed points have unstable indices one and five, respectively.
Each center subbundle contracts at \(q_-\) and expands at \(q_+\),
so neither is uniformly expanding or uniformly contracting. The two
center restrictions at \(q_-\) have no real invariant line. Since
each \(C_i\) has dimension two, it admits no nontrivial dominated
refinement on \(\Lambda\).

\smallskip
\noindent\emph{The physical measure.}
For \(y\ne p_+\), we have \(h^ny\to p_-\). Eventually
\(\rho(h^ny)=0\), after which the second fiber contracts by
\(1/16\) at each iterate. Hence, for every
\(x\in\Basin_{A^N}(m_A)\), every \(y\ne p_+\), and every \(z\in D\),
\[
 \frac1n\sum_{j=0}^{n-1}\delta_{F^j(x,y,z)}\weakstar\mu_-.
\]
Thus \(\Basin(\mu_-)\) has full Lebesgue measure in \(U\), and
hence in \(B(\Lambda)\), by pulling back the exceptional set along
entrance times.

Let \(\nu\) be any physical measure supported on \(\Lambda\).
A continuous function equal to one on \(\Lambda\) and supported in
\(U\) shows that every point of \(\Basin(\nu)\) eventually enters
\(U\). Since \(\Basin(\nu)\) is invariant and has positive volume,
its intersection with \(U\) has positive volume. It therefore meets
\(\Basin(\mu_-)\), so \(\nu=\mu_-\). This proves uniqueness and
completes the proof.
\end{proof}

\subsection{Robustness}

The \(u\)-definiteness in
Proposition~\ref{thm:multiple-center-subbundles-example} persists under
small \(C^1\) perturbations within the \(C^{1+\alpha}\) class,
without requiring the skew-product structure to be preserved.

\begin{proposition}\label{prop:robust-two-center}
The map \(F\) has a \(C^1\)-neighborhood \(\cO\) such that,
for every \(g\in\cO\), the region \(U\) remains trapping and its
attracting continuation \(\Lambda_g\) admits the continued partially
hyperbolic splitting
\[
 T_{\Lambda_g}\mathbf{M}=E_g^u\oplus_{\succ} C_{1,g}\oplus_{\succ} C_{2,g}\oplus_{\succ} E_g^s.
\]
For \(g\in\cO\cap\Diff^{1+\alpha}(\mathbf{M})\), both center subbundles
are \(u\)-definite. Neither is uniformly expanding or uniformly
contracting, and neither admits a nontrivial dominated refinement.
\end{proposition}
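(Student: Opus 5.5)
The first part is standard persistence. Trapping of $U$ is a $C^0$-open condition, since $F(\overline U)$ is a compact subset of the open set $U$. Partial hyperbolicity with a fixed dominated splitting of given dimensions $(1,2,2,1)$ is $C^1$-robust by the cone-field criterion. The cones for $E^u$, $E^u\oplus C_1$, $E^u\oplus C_1\oplus C_2$ and their dual cones are strictly invariant with uniform rates on $\overline U$ for $F$, and they remain so for $g$ that are $C^1$-close. This yields $E^u_g\oplus C_{1,g}\oplus C_{2,g}\oplus E^s_g$ on $\Lambda_g=\bigcap_n g^n(\overline U)$, depending continuously on $g$. Nonuniformity and the absence of dominated refinements come from the hyperbolic fixed points $q_\pm$. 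They have continuations $q_{\pm,g}\in\Lambda_g$. Their center derivatives are $C^1$-close to $8^{\mp1}R_{\mp\theta}$-type rotations, so they keep complex eigenvalues of moduli close to $1/8,1/16$ at $q_-$ and $8,2$ at $q_+$. Contraction at $q_{-,g}$ and expansion at $q_{+,g}$ along both $C_{i,g}$ rule out uniform behavior. The absence of a real invariant line at $q_{-,g}$ (nonreal eigenvalues persist for small perturbations, since $\theta/\pi\notin\mathbb Q$) excludes a splitting $C_{i,g}=\ell\oplus\ell'$ into line fields.

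The core is $u$-definiteness for $g\in\cO\cap\Diff^{1+\alpha}$. I plan a contradiction and compactness argument. The key inputs are upper semicontinuity of $g\mapsto\mathcal G^u(g)$ (limits of Gibbs $u$-states of $g_n\to F$ in $C^1$, with uniform $C^{1+\alpha}$ not available, should be Gibbs $u$-states of $F$) and the explicit structure at $F$. For $F$, every Gibbs $u$-state is $a\mu_-+(1-a)\mu_+$, and $\int\log\|DF|_{C_2}\|\,d\mu\le-\kappa$ by \eqref{eq:example-negative-gibbs-average}. The last bound is uniform over all Gibbs $u$-states because it uses only $(\pi_x)_*\nu=m_A$.

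Set this up with $C_2$ first. By continuity of $\mu\mapsto\int\log\|Dg|_{C_{2,g}}\|\,d\mu$ in $(g,\mu)$, this gives $\int\log\|Dg|_{C_{2,g}}\|\,d\mu<-\kappa/2$ for all Gibbs $u$-states $\mu$ of $g$ near $F$. Applying this to ergodic ones shows all $C_{2,g}$ exponents are negative. For $C_1$ I use an iterate: for $F$, $\int\frac1n\log m(DF^n|_{C_1})\,d\mu_+\ge\log8-\epsilon$ and $\int\frac1n\log\|DF^n|_{C_1}\|\,d\mu_-\le-\log8+\epsilon$. An ergodic Gibbs $u$-state of $g$ need not be close to $\mu_\pm$, though. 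So I use the following route. Let $W=\{y\text{ near }p_+\}$ and apply the analogous averaged estimate to the continuous functions $\varphi_n^\pm$ built from $\frac1n\log m$ and $\frac1n\log\|\cdot\|$ along $C_{1,g}$, localized by a partition of unity in the $y$-coordinate near $p_\pm$. For $F$-Gibbs states, $\mu(\{y\text{ near }p_+\}\cup\{y\text{ near }p_-\})=1$ and the two regions are $F$-invariant near $\Lambda$ up to transients. For $g$ near $F$, an ergodic Gibbs $u$-state gives mass close to $1$ to a neighborhood of $\{p_+,p_-\}\times\mathbb T^2\times\{0\}$. Ergodicity and the small "leakage" between the two neighborhoods, controlled by the north-south dynamics $h$ being robustly Morse–Smale on the first sphere, force the mass to be close to $1$ near exactly one of them. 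Near $p_+$, $DF|_{C_1}$ is conjugate to $Dh\approx8$-conformal, so $\lambda^-_{C_{1,g}}>0$. Near $p_-$ it is $\approx1/8$, so $\lambda^+_{C_{1,g}}<0$.

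The main obstacle is this last dichotomy: converting "Gibbs $u$-states of $g$ are close to convex combinations of $\mu_\pm$" into a statement about each ergodic one, with no skew-product structure. I would first try a filtration argument. A neighborhood $\mathcal N_+$ of $\mathbb T^2\times\{p_+\}\times\{0\}$ is repelling in the first-sphere direction for $F$, and hence for $g$: $g^{-1}(\overline{\mathcal N_+}\cap\overline U)\subset\mathcal N_+$ within the trapping region. So any ergodic measure either lives in $\bigcap_n g^{-n}\mathcal N_+$ or gives it zero mass and then concentrates near $p_-$. Combined with weak-$*$ closeness to $\mathcal G^u(F)$, this gives the needed localization and completes the proof.
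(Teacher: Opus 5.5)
Your persistence argument, your treatment of $C_{2,g}$ (the $C^1$ upper semicontinuity of Gibbs $u$-states combined with the bound $\int\log\|DF|_{C_2}\|\,d\nu\le-\kappa$ on $\mathcal G^u(F)$), and your use of $q_\pm(g)$ all match the paper. For the nonreal eigenvalues, persistence needs only $\sin\theta\neq0$, not irrationality of $\theta/\pi$.

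The difference is in $C_{1,g}$. The paper uses a two-sided filtration, $W_\pm=\pi_y^{-1}(V_\pm)$ with $g^{-1}(\overline W_+)\subset W_+$ and $g(\overline W_-)\subset W_-$. It adds the Lyapunov function $H(y)=|y|^2/(1+|y|^2)$, which increases by a uniform $\delta$ at each step outside $W_+\cup W_-$. By recurrence, \emph{every} ergodic measure on $\Lambda_g$ is then carried by $W_+$ or $W_-$. There, one-step bounds on $m(Dg|_{C_{1,g}})$ and on $\|Dg|_{C_{1,g}}\|$ fix the sign, with no Gibbs-state input.

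You keep only the repelling half, which gives $\nu(\mathcal N_+)\in\{0,1\}$ for ergodic $\nu$, and then reuse the compactness from $C_2$. This is sound, provided you run it as a contradiction argument rather than through ``mass close to $1$''. Take $g_j\to F$ and ergodic Gibbs $u$-states $\nu_j$ of $g_j$ with $\nu_j(\mathcal N_+)=0$ (resp.\ $=1$). Every limit $a\mu_-+(1-a)\mu_+$ then has $a=1$ (resp.\ $a=0$), by the portmanteau theorem on the open set $\mathcal N_+$ (resp.\ the closed set $\overline{\mathcal N_+}$). Hence $\int\log\|Dg_j|_{C_{1,g_j}}\|\,d\nu_j\to-\log 8$ (resp.\ $\int\log m(Dg_j|_{C_{1,g_j}})\,d\nu_j\to\log 8$), and sub- or superadditivity gives the sign. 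These limits use that $C_1$ is tangent to the first sphere on the slices $\{y=p_\pm\}$, where $D_yg_{x,y}=0$; alternatively, use an $n$-step version as in your first plan.

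Also, $\mathcal N_+$ must be the full cylinder $\pi_y^{-1}(V_+)$, not a set that is small in $z$. The reason is that $g^{-1}$ expands the $z$-coordinate at most points of $\Lambda$, so such a set would not be backward invariant.

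Your route avoids the annulus and transit-time argument. However, it gives the sign only for Gibbs $u$-states and depends on the classification of $\mathcal G^u(F)$ and on the semicontinuity theorem. The paper's argument is self-contained for $C_1$ and shows that $C_{1,g}$ has a definite sign for all ergodic measures on $\Lambda_g$.
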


\begin{proof}
Choose a \(C^1\)-neighborhood \(\cO\) of \(F\) such that every
\(g\in\cO\) remains globally partially hyperbolic,
\(g(\overline U)\subset U\), and
\[
 \Lambda_g=\bigcap_{n\ge0}g^n(\overline U)
\]
admits the continued dominated splitting. 
We first consider \(C_{2,g}\). By the \(C^1\) upper
semicontinuity of Gibbs \(u\)-states
\cite[Theorem~B]{YangEntropy}, if
\(g_j\in\cO\cap\Diff^{1+\alpha}(\mathbf{M})\) converges to \(F\)
and \(\nu_j\) is a Gibbs \(u\)-state of \(g_j\) supported on
\(\Lambda_{g_j}\), every weak-* limit is a Gibbs \(u\)-state of
\(F\) supported on \(\Lambda\).
Together with continuity of the continued subbundles and
\eqref{eq:example-negative-gibbs-average}, this gives, after
shrinking \(\cO\),
\[
 \int\log\|Dg|_{C_{2,g}}\|\,d\nu\le-\frac{\kappa}{2}
\]
for every \(g\in\cO\cap\Diff^{1+\alpha}(\mathbf{M})\) and every
Gibbs \(u\)-state \(\nu\) of \(g\) supported on \(\Lambda_g\).
For ergodic \(\nu\), subadditivity gives
\[
 \lambda_{C_{2,g}}^+(\nu)
 \le \int\log\|Dg|_{C_{2,g}}\|\,d\nu
 \le-\frac{\kappa}{2}.
\]
Thus \(C_{2,g}\) is \(u\)-definite.

For \(C_{1,g}\), we use the source--sink structure of \(h\).
Since \(\rho\) is constant near \(p_\pm\), we have
\(D_yg_{x,y}=0\) on the invariant slices \(y=p_\pm\).
Uniqueness of the invariant graph defining \(C_1\) therefore
implies that \(C_1\) is tangent to the first sphere on these
slices. Its derivative there is \(8R_\theta\) and
\(8^{-1}R_{-\theta}\), respectively.

Choose sufficiently small spherical disks \(V_\pm\) about
\(p_\pm\), with disjoint closures, and put
\(W_\pm=\pi_y^{-1}(V_\pm)\). By continuity of the subbundles
and upper semicontinuity of the attracting continuation,
after shrinking \(\cO\) we have, for every \(g\in\cO\),
\[
 \begin{aligned}
 m(Dg|_{C_{1,g}})&>2
 &&\text{on }\Lambda_g\cap W_+,\\
 \|Dg|_{C_{1,g}}\|&<\frac12
 &&\text{on }\Lambda_g\cap W_-.
 \end{aligned}
\]
We now keep \(V_\pm\) fixed. Since \(p_+\) is a source and
\(p_-\) is a sink, a further shrinking of \(\cO\) ensures
\[
 g^{-1}(\overline W_+)\subset W_+,
 \qquad
 g(\overline W_-)\subset W_-
 \qquad(g\in\cO).
\]
Moreover, the function
\[
 H(y)=\frac{|y|^2}{1+|y|^2},
 \qquad H(p_-)=1,
\]
satisfies \(H(h(y))>H(y)\) away from \(p_\pm\).
By compactness of the annulus outside \(V_+\cup V_-\),
after shrinking \(\cO\) there exists \(\delta>0\) such that
\[
 H(\pi_y(gw))-H(\pi_y(w))\ge\delta
 \qquad
 \text{for every }g\in\cO
 \text{ and }w\notin W_+\cup W_-.
\]
Thus an orbit outside \(W_+\) cannot enter \(W_+\) and
reaches \(W_-\) in uniformly bounded time. Poincar\'e
recurrence and ergodicity imply that every ergodic invariant
measure on \(\Lambda_g\) gives full measure to either \(W_+\)
or \(W_-\). The derivative bounds above then give only
positive or only negative exponents along \(C_{1,g}\).
In particular, \(C_{1,g}\) is \(u\)-definite.

Finally, after shrinking \(\cO\), the hyperbolic fixed points
\(q_\pm\) have continuations \(q_\pm(g)\in U\). Since these
points are fixed, they belong to \(\Lambda_g\). For each
\(i=1,2\), the eigenvalues of \(Dg|_{C_{i,g}}\) have modulus
less than one at \(q_-(g)\) and greater than one at \(q_+(g)\).
Hence neither center subbundle is uniformly expanding or
uniformly contracting. At \(q_-(g)\), both center restrictions
retain nonreal eigenvalues and therefore admit no real
invariant line. Since each \(C_{i,g}\) is two-dimensional,
neither admits a nontrivial dominated refinement.
\end{proof}

For every \(g\in\cO\cap\Diff^{1+\alpha}(\mathbf{M})\),
Proposition~\ref{prop:robust-two-center} and Theorem~\ref{thm:A}
give finitely many ergodic hyperbolic SRB measures whose basins
cover \(B(\Lambda_g)\) modulo zero.
The classification \eqref{eq:example-two-gibbs-states} and the failure
of \(u\)-definiteness for \(C_1\oplus C_2\) are asserted only for
\(F\).

\begin{remark}
\label{rem:distinct-indices}
The physical measures in Theorem~\ref{thm:A} need not have the same
unstable index. Indeed, one can construct three pairwise disjoint
mixing hyperbolic attractors on a connected manifold with center sign
patterns
\[
 (-,-),\qquad (+,-),\qquad (+,+),
\]
respectively. They can be realized simultaneously by a skew-product
construction,\footnote{For example, take a Morse--Smale circle map
with three attracting fixed points and choose the two center fiber
maps near these points with signs \((-,-)\), \((+,-)\), and \((+,+)\),
respectively.}
with a partially hyperbolic splitting
\(E^u\oplus_{\succ} C_1\oplus_{\succ} C_2\oplus_{\succ} E^s\), where
\(\dim C_1=\dim C_2=2\). Both center subbundles are therefore
\(u\)-definite.

Each attractor carries a unique physical SRB measure, with unstable
indices \(1,3,5\), respectively. Their basins cover the basin of
the union modulo zero. These properties persist in a
\(C^1\)-neighborhood by uniform hyperbolicity.
\end{remark}

\end{sloppypar}
\end{document}